\newcommand{\clos}[1]{\operatorname{clos}_{#1}}
\newcommand{\hide}[1]{}
\numberwithin{equation}{section}
\def\Re{{\sf Re}\,}
\def\eps{\varepsilon}
\newcommand{\D}{\mathbb D}
\newcommand{\R}{\mathbb R}
\newcommand{\Z}{\mathbb Z}
\newcommand{\C}{\mathbb C}
\newcommand{\Aut}{{\sf Aut}(\mathbb D)}
\newcommand{\N}{\mathbb N}
\def\Re{{\sf Re}\,}
\newcommand{\F}{{}_2F_1}
\newcommand{\Fz}[3]{{}_2F_1\left(#1;#2;#3\right)}
\def\Aut{{\sf Aut}}
\def\Re{{\sf Re}\,}
\def\Re{{\sf Re}\,}
\def\1#1{\overline{#1}}
\def\2#1{\widetilde{#1}}
\def\3#1{\widehat{#1}}
\def\4#1{\mathbb{#1}}
\def\5#1{\frak{#1}}
\def\6#1{{\mathcal{#1}}}
\def\Re{{\sf Re}\,}
\newcommand{\mcite}[1]{\csname b@#1\endcsname}
\newtheoremstyle{break}
{8pt}{8pt}%
{\itshape}{}%
{\bfseries}{}%
{\newline}{}%
\theoremstyle{break}
\def\Aut{{\sf Aut}}
\def\Re{{\sf Re}\,}
\newcommand{\cc}[1]{\overline{{#1}}}
\DeclarePairedDelimiter{\abs}{\lvert}{\rvert}
\newcommand{\del}{\mathop{}\!\partial}
\theoremstyle{break}
\newtheorem{theorem}{Theorem}[section]
\newtheorem*{theorem*}{Theorem}
\newtheorem{lemma}[theorem]{Lemma}
\newtheorem*{lemma*}{Lemma}
\newtheorem{proposition}[theorem]{Proposition}
\newtheorem{corollary}[theorem]{Corollary}
\theoremstyle{break}
\newtheorem{definition}[theorem]{Definition}
\newtheorem{example}[theorem]{Example}
\theoremstyle{remark}
\newtheorem{remark}{Remark}
\numberwithin{equation}{section}
\newcommand*\pFq[6][8]{%
	\begingroup 
	\pFqmuskip=#1mu\relax
	\mathchardef\normalcomma=\mathcode`,
	\mathcode`\,=\string"8000
	\begingroup\lccode`\~=`\,
	\lowercase{\endgroup\let~}\pFqcomma
	{}_{#2}F_{#3}{\left[\genfrac..{0pt}{}{#4}{#5};#6\right]}%
	\endgroup
}
\newcommand{\pFqcomma}{{\normalcomma}\mskip\pFqmuskip}
\newcommand*\FF[3][8]{%
	\begingroup 
	\pFqmuskip=#1mu\relax
	\mathchardef\normalcomma=\mathcode`,
	\mathcode`\,=\string"8000
	\begingroup\lccode`\~=`\,
	\lowercase{\endgroup\let~}\pFqcomma
	{}_{3}F_{2}{\left[\genfrac..{0pt}{}{#2}{#3}\right]}%
	\endgroup
}
\definecolor{dgreen}{rgb}{0,0.5,0}
\newenvironment{mylist}{\begin{list}{}%
		{\labelwidth=2em\leftmargin=\labelwidth\itemsep=.4ex plus.1ex minus.1ex\topsep=.7ex plus.3ex minus.2ex}%
		\let\itm=\item\def\item[##1]{\itm[{\rm ##1}]}}{\end{list}}
\def\blfootnote{\xdef\@thefnmark{}\@footnotetext}
\author[M. Heins]{Michael Heins}
\address{M. Heins: Department of Mathematics, University of W{\"u}rzburg, Emil Fischer Strasse 40, 97074, W{\"u}rzburg, Germany.} \email{michael.heins@mathematik.uni-wuerzburg.de}
\author[A.~Moucha]{Annika Moucha$^\dag$}
\address{A. Moucha: Department of Mathematics, University of W{\"u}rzburg, Emil Fischer Strasse 40, 97074, W{\"u}rzburg, Germany.} \email{annika.moucha@mathematik.uni-wuerzburg.de}
\author[O. Roth]{Oliver Roth}
\address{O. Roth: Department of Mathematics, University of W{\"u}rzburg, Emil Fischer Strasse 40, 97074, W{\"u}rzburg, Germany.} \email{roth@mathematik.uni-wuerzburg.de}
\title[Spectral theory of the invariant Laplacian]{Spectral theory of the invariant Laplacian\\[2mm] on the disk and the sphere – a complex analysis approach}
\thanks{$^\dag\,$Partially supported by the Alexander von Humboldt Stiftung}
\begin{document}

    \blfootnote{2020 \textit{Mathematics Subject Classification.} Primary 30F45, 30H50, 35P10;   Secondary  30B40, 53A55}
    \blfootnote{\textit{Key words and phrases.} Eigenvalue theory of the invariant Laplacian, holomorphic eigenfunctions, spectral decomposition, M{\"o}bius invariant subspaces}

\begin{abstract}
The central theme of this paper is the holomorphic spectral theory of the canonical Laplace operator of the complement $\Omega \coloneqq \{(z,w) \in \widehat{\C}^2 \colon z \cdot w \neq 1\}$ of the ``complexified unit circle'' $\{(z,w) \in\widehat{\C}^2 \colon z \cdot w = 1\}$. We start by singling out a distinguished set of holomorphic eigenfunctions on the bidisk in terms of hypergeometric $\F$ functions and prove that they provide~a spectral decomposition of every holomorphic eigenfunction on the bidisk. As a second step, we identify the maximal domains of definition  of these eigenfunctions and show that these maximal domains naturally determine the fine structure of the eigenspaces. Our main result gives an intrinsic classification of all closed M{\"o}bius invariant subspaces of eigenspaces of the canonical Laplacian of $\Omega$. Generalizing foundational prior work of Helgason and Rudin, this provides~a unifying complex analytic framework for the real--analytic eigenvalue theories of both the hyperbolic and spherical Laplace operators on the open unit disk resp.~the Riemann sphere and, in particular, shows how they are interrelated with one another.
    \end{abstract}
\maketitle
	\section{Introduction}

Let $\widehat{\C}:=\C \cup \{\infty\}$ denote the Riemann sphere. The purpose of this paper is to explore the spectral theory of the complex \textit{invariant Laplace operator}
        \begin{equation*} \label{eq:InvLaplacian}
           \Delta_{zw}=4 (1-zw)^2 \partial_z \partial_w \,
         \end{equation*}
         of the  \textit{complement of the complexified unit circle},
        \begin{equation} \label{eq:Omega}
            \Omega:=\widehat{\C}^2 \setminus \left\{ (z,w) \in\widehat{\C}^2 \, : \, z\cdot w=1\right\}\footnotemark
        \end{equation}
by function--theoretic methods. \footnotetext{Here, we extend the arithmetic in $\C$ in the usual manner by $z \cdot \infty = \infty = \infty \cdot z$ for $z \in \widehat{\C} \setminus \{0\}$ and $0 \cdot \infty = 1 = \infty \cdot 0$.
        We think of the complexified unit circle as the set $\{(z,w) \in \widehat{\C}^2 \, : \, z \cdot w=1\}$.}
    This approach allows  a unified study of the {\textit{real--analytic}} spectral theories of the hyperbolic Laplacian $\Delta_\D:=4(1-|z|^2)^2 \partial_z\partial_{\overline{z}}$ on the open unit disk $\D:=\{z \in \C \, : \, |z|<1\}$ and the spherical Laplacian $\Delta_{\widehat{\C}}:=-4(1+|z|^2)^2 \partial_z \partial_{\overline{z}}$ on the Riemann sphere~$\widehat{\C}$ from a  complex analytic point of view and, in addition, it also shows how they are interrelated to one another. Beyond that, the complex point of view taken in this paper offers several other useful advantages. In particular, it connects in a natural way the fine structure of the eigenspaces of the hyperbolic and spherical Laplacians as described by Helgason \cite{Helgason70} and Rudin \cite{Rudin84} with the \textit{maximal} domain of existence of the corresponding holomorphic eigenfunctions of the invariant Laplacian~$\Delta_{zw}$.

    \medskip

    As one instance, we analyze from a  complex analysis point of view the building blocks of each $\lambda$--eigenspace of the hyperbolic Laplacian $\Delta_\D$, which have previously been identified e.g.~by Helgason in \cite{Helgason70} using real variable methods. It turns out that these so--called Poisson Fourier modes naturally extend to holomorphic eigenfunctions of $\Delta_{zw}$ which are (maximally) defined either on~$\Omega$ or on one of three distinguished subdomains of $\Omega$ depending on the choice of the eigenvalue~$\lambda$. This then allows a transparent proof that each holomorphic eigenfunction of $\Delta_{zw}$ defined on any rotationally invariant subdomain  of~$\Omega$ has a unique spectral decomposition in form of a  locally uniformly and absolutely convergent infinite series composed of Poisson Fourier modes. In the special case of the bidisk $\D^2$ and further restriction to the ``diagonal'' $\{(z,\overline{z}) \, : \, z \in \D\}$ we  recover the spectral decomposition of the smooth eigenfunctions of $\Delta_{\D}$ on the unit disk $\D$ as described e.g.~in \cite{Helgason70} or \cite{BerensteinGay1995}.

    \medskip

As a second instance, we investigate the structure of the closed ``M{\"o}bius invariant'' subspaces of any fixed eigenspace $X_\lambda(\D)$ of $\Delta_{\D}$ from a complex analysis point of view. This topic has been investigated in detail by Rudin in \cite{Rudin84} using purely ``real'' methods.
We shall see that the distinguished subdomains of $\Omega$ mentioned above naturally lead to the same distinction between exceptional and non--exceptional eigenvalues which have been found by Rudin. In Rudin's work, the exceptional cases correspond to the eigenvalues $\lambda=4m(m+1)$, $m=0,1,2\ldots$, and they are characterized by the existence of three non--trivial M{\"o}bius invariant closed subspaces of $X_{\lambda}(\D)$, exactly one of which, $X^0_\lambda(\D)$ say, is finite dimensional.  It turns out that a complex number $\lambda \in \C$ is an exceptional eigenvalue in the sense of Rudin if and only the invariant Laplacian $\Delta_{zw}$ has a globally (that is on $\Omega$) defined holomorphic $\lambda$--eigenfunction. Moreover, in this case the  unique finite dimensional invariant subspace  $X^0_{\lambda}(\D)$  corresponds precisely to the full $\lambda$--eigenspace of all globally defined $\lambda$--eigenfunctions of the invariant Laplacian $\Delta_{zw}$,  which then, in fact, is invariant under the full group of all M{\"o}bius transformations. For the other two non--trivial invariant subspaces of the exceptional $\Delta_\D$--eigenspace $X_\lambda(\D)$ discovered by Rudin as well as the full eigenspace $X_{\lambda}(\D)$ itself we give a similar but more intricate description in form of Runge--type approximation results in terms of holomorphic $\lambda$--eigenfunctions defined precisely on one of the distinguished three subdomains of $\Omega$, see Theorem \ref{thm:RudinIntro}.

\medskip

In the next section we give an account of the main results of this work and their ramifications for the spectral theory of the hyperbolic and spherical Laplacian as well as an outline of the structure of the remaining sections.
The accompanying papers \cite{HeinsMouchaRoth1,HeinsMouchaRoth2,KrausRothSchleissinger,Annika} are  related to other aspects of the function theory of the set $\Omega$, the complement of the complexified unit circle, and its applications. Our interest in this set and its inhabitants, the holomorphic functions on~$\Omega$, first arose in connection with previous work \cite{BeiserWaldmann2014,CahenGuttRawnsley1994,EspositoSchmittWaldmann2019,KrausRothSchoetzWaldmann, SchmittSchoetz2022,Waldmann2019} on  canonical Wick--type star products in  strict deformation quantization of the unit disk and the Riemann sphere, and from our desire to understand the somehow mysterious role played by $\Omega$ and in particular by its function--theoretic properties in this regard. A partial explanation was given in \cite{HeinsMouchaRoth2}, where it was indicated that invariant differential operators of Peschl--Minda type, on the one hand, effectively facilitate and unify the study of the star products on the disk and the sphere, and on the other hand, are perhaps best understood as operators acting on the spaces of holomorphic functions on~$\Omega$ and its three distinguished  subdomains.  We started wondering whether and how the most basic differential operator acting on $\Omega$, the invariant Laplacian $\Delta_{zw}$, and its spectral theory possibly fit into this emerging picture. This paper describes what we have found. In the forthcoming paper \cite{Annika} of the second--named author these endeavours will come to full circle:  it is shown that there are globally defined eigenfunctions of the Laplacian $\Delta_{zw}$ which
form a Schauder basis of the Fr\'echet space $\mathcal{H}(\Omega)$  of all holomorphic functions on $\Omega$. The results of the present paper then imply that the algebra   $$\mathcal{A}(\D)=\big\{f : \D \to \C \, \big| \, f(z)=F(z,\overline{z}) \text{ for all } z \in \D \text{ for some } F \in \mathcal{H}(\Omega)\big\} \, ,$$
for which the Wick--star product on $\D$ in \cite{KrausRothSchoetzWaldmann} is constructed, admits a spectral decomposition precisely into the finite dimensional invariant subspaces of the exceptional eigenspaces of the hyperbolic Laplace operator~$\Delta_{\D}$ on $\D$ discovered by Rudin \cite{Rudin84} many years ago. This provides an intrinsic characterization of the algebra $\mathcal{A}(\D)$ in terms of the natural hyperbolic geometry of the unit disk and its canonical invariant Laplacian~$\Delta_\D$.

    \section{Overview and main results} \label{sec:MainResults}

    In order to place the results of this paper into a broader context we begin by recalling in greater detail the striking distinction between \textit{exceptional} and \textit{non--exceptional} eigenvalues of the hyperbolic Laplace operator $\Delta_{\D}$ and its relevance for the
study of the \textit{invariant} subspaces of the $\Delta_{\D}$--eigenspaces which has been discovered by Rudin \cite{Rudin84}.
The Fr\'echet space of all twice continuously (real) differentiable functions $f : \D \to \C$ equipped with the standard compact--open topology is denoted by $C^2(\D)$.
For each $\lambda \in \C$ we denote by $X_{\lambda}(\D)$  the vector space of all $\lambda$--eigenfunctions $f \in C^2(\D)$ of the hyperbolic Laplacian, that is,
      $$ X_{\lambda}(\D)=\left\{ f \in C^2(\D) \, : \Delta_{\D} f=\lambda f \text{ on } \D \right\} \, . $$
It is known that each such $\Delta_{\D}$--eigenspace $X_{\lambda}(\D)$ is a closed infinite dimensional subspace of $C^2(\D)$.
      The hyperbolic Laplacian $\Delta_{\D}$ is invariant under the full group of all conformal automorphisms (biholomorphic maps) $T$ of the unit disk $\D$ in the sense that
      $$\Delta_{\D} (f \circ T)=\left(\Delta_{\D} f\right) \circ T \,\quad \text{ for all } f \in C^2(\D) \, .$$
      In order to emphasize that this group consists entirely of M{\"o}bius transformations, we call it  the \textit{M{\"o}bius group of $\D$} and denote it  by $\mathcal{M}(\D)$. A  closed subspace~$Y$ of $C^2(\D)$ is called \textit{M{\"o}bius invariant} if $f \circ \psi \in Y$ for all $f \in Y$ and all $\psi \in \mathcal{M}(\D)$.\footnote{M{\"o}bius invariant spaces are called $\mathcal{M}$--spaces in \cite{Rudin84}. We will reserve the symbol $\mathcal{M}$ for some other purpose.} It is called non--trivial if $Y\not=\{0\}$ and $Y\not=X$.

      \begin{theorem}[Rudin \cite{Rudin84}] \label{thm:IntroRudin}
        Let $\lambda \in \C$.
        \begin{itemize}
        \item[(NE)] If $\lambda\not=4m(m+1)$ for $m=0,1,2,\ldots$, then $X_{\lambda}(\D)$ has no non--trivial M{\"o}bius invariant subspaces.
        \item[(E)] If $\lambda=4m(m+1)$ for some $m=0,1,2,\ldots$, then  $X_{\lambda}(\D)$ has precisely three distinct non--trivial M{\"o}bius invariant subspaces. There is exactly one non--trivial M{\"o}bius invariant subspace of $X_{\lambda}(\D)$ which is finite dimensional; its dimension is $2m+1$.
             \end{itemize}
        \end{theorem}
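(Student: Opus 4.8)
The plan is to translate the problem into a purely combinatorial question about subsets of $\Z$, and then to settle that question by a single hypergeometric computation. By the spectral decomposition established earlier, every $f\in X_\lambda(\D)$ has a locally uniformly convergent expansion $f=\sum_{n\in\Z}c_n\,\phi_n^\lambda$ into Poisson--Fourier modes, where each $\phi_n^\lambda$ is a rotation eigenfunction of weight $n$ (its restriction of the corresponding holomorphic $\Delta_{zw}$--eigenfunction on $\D^2$ to the diagonal). Since distinct modes are linearly independent, the weight--$n$ subspace of $X_\lambda(\D)$ is exactly $\C\,\phi_n^\lambda$. Now let $Y$ be a closed M\"obius invariant subspace. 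Being invariant under the rotation subgroup of $\mathcal M(\D)$, it is preserved by each continuous Fourier projection
\[
\Pi_n(f):=\frac1{2\pi}\int_0^{2\pi}e^{-in\theta}\,(f\circ R_\theta)\,d\theta,
\]
which sends $\sum_k c_k\phi_k^\lambda$ to $c_n\phi_n^\lambda$ and satisfies $f=\sum_n\Pi_n(f)$. Hence $Y=\overline{\operatorname{span}}\{\phi_n^\lambda: n\in S_Y\}$ with $S_Y:=\{n\in\Z:\phi_n^\lambda\in Y\}$, and classifying the $Y$'s reduces to determining which $S\subseteq\Z$ can occur.

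To exploit the non--rotational directions of $\mathcal M(\D)$ I would differentiate the action: a non--rotational one--parameter subgroup of $\mathcal M(\D)$ gives, on $X_\lambda(\D)$, first--order differential operators commuting with $\Delta_\D$, and passing to the complexified Lie algebra one obtains a weight--raising operator $E^+$ and a weight--lowering operator $E^-$ with $E^\pm\phi_n^\lambda=\gamma_n^\pm(\lambda)\,\phi_{n\pm1}^\lambda$ for suitable scalars $\gamma_n^\pm(\lambda)$. Because $Y$ is \emph{closed} and invariant under the generating one--parameter subgroups, it is invariant under these infinitesimal generators, so $E^\pm Y\subseteq Y$; consequently $S_Y$ must be \emph{saturated}, meaning that $n\in S_Y$ together with $\gamma_n^\pm(\lambda)\neq0$ forces $n\pm1\in S_Y$. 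Conversely, for any saturated $S$ the subspace $\overline{\operatorname{span}}\{\phi_n^\lambda:n\in S\}$ is invariant under $E^+$, $E^-$ and the rotations, hence — after exponentiating, which for infinite $S$ is precisely the Runge--type density statement from the earlier part of the paper — under all of $\mathcal M(\D)$. Thus the M\"obius invariant subspaces of $X_\lambda(\D)$ are in bijection with the saturated subsets of $\Z$.

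The crux is then to compute the ladder coefficients $\gamma_n^\pm(\lambda)$ and to see exactly when they vanish. Writing the modes $\phi_n^\lambda$ explicitly in terms of Gauss hypergeometric functions $\F$ and applying the contiguous relations, one finds that $\gamma_n^+(\lambda)$ and $\gamma_n^-(\lambda)$ are, up to nowhere--vanishing factors, linear in $n$, and that \emph{all} of them are nonzero if and only if $\lambda\neq 4m(m+1)$ for every $m=0,1,2,\dots$. This settles case (NE): a nonempty saturated set then, by repeated raising and lowering, equals all of $\Z$, so the only M\"obius invariant subspaces are $\{0\}$ and — by the spectral decomposition — $X_\lambda(\D)$ itself. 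In the exceptional case $\lambda=4m(m+1)$ the same computation shows that the only vanishing coefficients are $\gamma_m^+(\lambda)=0$ and $\gamma_{-m}^-(\lambda)=0$; hence the proper nonempty saturated subsets of $\Z$ are precisely $\{-m,-m+1,\dots,m\}$, $\{n\in\Z:n\ge-m\}$ and $\{n\in\Z:n\le m\}$. These yield three pairwise distinct non--trivial invariant subspaces; exactly the first is finite dimensional, of dimension $2m+1$, and it coincides with the span of those modes extending to holomorphic $\lambda$--eigenfunctions of $\Delta_{zw}$ on all of $\Omega$, i.e.\ with $X^0_\lambda(\D)$.

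The main obstacle is the third step: the explicit evaluation of the connection coefficients $\gamma_n^\pm(\lambda)$, and above all the verification that in the exceptional case \emph{no further} coefficients vanish — any extra vanishing would produce spurious invariant subspaces. This is exactly the point at which the fine structure of the ${}_2F_1$--modes and the identification of their maximal domains of definition (on $\Omega$ versus on the three distinguished subdomains) do the real work, and it is how the exceptional values $4m(m+1)$, the number three, and the split into one finite--dimensional ``global'' piece and two infinite--dimensional one--sided pieces all emerge from the complex--analytic picture. A secondary technical point is the exponentiation in the converse direction of the second step, i.e.\ upgrading $E^\pm$--invariance of $\overline{\operatorname{span}}\{\phi_n^\lambda:n\in S\}$ to invariance under the full group $\mathcal M(\D)$, which for infinite $S$ is the Runge--type approximation result.
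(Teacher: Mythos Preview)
Your proposal is essentially the paper's own argument, transplanted from the bidisk $\D^2$ back to $\D$: Fourier projection reduces a closed invariant $Y$ to a set $S_Y\subseteq\Z$ of indices, the ladder operators (your $E^\pm$ are exactly the paper's $D^\pm$ from \eqref{eq:dualPM}) force $S_Y$ to be saturated, and the vanishing pattern of the shift coefficients distinguishes the two cases. The paper computes these coefficients not via contiguous relations but by a one--line homogeneity argument (Lemma~\ref{lem:PoissonFouriershiftbyDualPM}): $D^+P_n^\mu=(\mu-n-1)P_{n+1}^\mu$, so with $\mu=m+1$ the only zeros sit at $n=m$ and (symmetrically) $n=-m$; no hypergeometric analysis is needed here, and this is in fact the \emph{easy} step.

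Where you have the emphasis slightly off is the converse direction. You describe the passage from $E^\pm$--invariance to full $\mathcal{M}(\D)$--invariance as ``exponentiation'' backed by the Runge--type density results. In the paper the logic runs the other way: the Runge statements (Theorem~\ref{thm:RudinIntro}) are \emph{consequences} of the classification, not inputs to it. The actual tool for Step~2 is the explicit pullback formula of Theorem~\ref{thm:Pullbackproposition} (and Corollary~\ref{rem:Pullbackproposition}), which expresses $P_n^\mu\circ T_{u,\bar u}$ as an absolutely and locally uniformly convergent combination of modes $P_j^\mu$ with indices $j$ lying in the correct half--line or interval. This is what makes the candidate spaces $Y_0,Y_+,Y_-$ genuinely group--invariant, and it is the genuinely nontrivial part of the proof; abstract Lie--algebra--to--group arguments are delicate in this infinite--dimensional, non--unitary setting and the paper avoids them entirely.
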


        The alternative (E) in Theorem \ref{thm:IntroRudin} will be called the \textit{exceptional} case and the unique finite dimensional M{\"o}bius invariant subspace of $X_{\lambda}(\D)$ will be denoted by $X^0_\lambda(\D)$. The alternative (NE) will be referred to as the \textit{non--exceptional} case.

        \medskip

        One of the main results of the present paper is a complete analogue of Theorem \ref{thm:IntroRudin} with $\Delta_{\D}$ replaced by the differential operator $\Delta_{zw}$, see Theorem \ref{thm:2Intro}. Apart from being potentially interesting in its own right, it  provides a concrete function--theoretic description of the exceptional eigenspaces in Theorem \ref{thm:IntroRudin}. This also adds a conceptual component to Rudin's handling of the invariant eigenspaces of the hyperbolic Laplace operator $\Delta_\D$.

         \medskip

         The characteristic feature of our approach is  to  look for \textit{holomorphic} solutions $F$ of the eigenvalue equation
        \begin{equation} \label{eq:EWE}
            \Delta_{zw} F=\lambda F \,
        \end{equation}
        defined on a subdomain $D$ of $\Omega$ which we wish to choose as large as possible depending on the eigenvalue $\lambda$. These \textit{maximal domains of existence} (see Definition~\ref{def:MaximalDomain}) of the $\lambda$--eigenfunctions turn out to be the only essential ingredients which are needed to give a complete description of the (invariant) $\lambda$--eigenspaces of the operator $\Delta_{zw}$ and its offsprings $\Delta_{\D}$ and $\Delta_{\widehat{\C}}$.

        \medskip

        In order to state our main results we have to adapt the notation which we have introduced above for the hyperbolic Laplacian to the case of the differential operator $\Delta_{zw}=4(1-zw)^2 \partial_z\partial_w$.
        Instead of working in the Fr\'echet space $C^2(\D)$ we now fix a subdomain $D$ of the set $\Omega:=\{(z,w) \in \widehat{\C} \, : \, z \cdot w\not=1\}$, and  work in the Fr\'echet space $\mathcal{H}(D)$ of all complex--valued holomorphic functions defined on $D$ (again equipped with the topology of locally uniform convergence, this time on~$D$).
        Our goal is determine the holomorphic solutions $F : D \to \C$ of  the eigenvalue equation (\ref{eq:EWE}),
 i.e. we are interested in the $\Delta_{zw}$--eigenspaces
        $$ X_{\lambda}(D):=\left\{ F \in \mathcal{H}(D) \, : \, \Delta_{zw} F=\lambda F \text{ on } D \right\} \, , \qquad \lambda \in \C \, .$$
        With regard to Rudin's theorem (Theorem \ref{thm:IntroRudin}) a particularly  natural choice for the domain $D$ is the \textit{bidisk} $\D^2:=\D \times \D$ since it is easy to see that for every $F \in X_{\lambda}(\D^2)$ the ``restriction'' of~$F$ to the ``diagonal'' $\{(z,\overline{z}) \, : \, z \in \D\}$, that is, $f(z):=F(z,\overline{z})$, $z \in \D$, yields an eigenfunction $f \in C^2(\D)$ of the hyperbolic Laplacian $\Delta_{\D}$ for the eigenvalue $\lambda$.
       In fact, the following result shows that every $f \in X_{\lambda}(\D)$ arises in this fashion, that is,  it has an ``extension'' to an  eigenfunction $F$ of~$\Delta_{zw}$ which is holomorphic on the bidisk $\D^2$:

		\begin{theorem}[Smooth eigenfunctions of $\Delta_{\D}$ on $\D$ vs.~holomorphic eigenfunctions of $\Delta_{zw}$ on~$\D^2$] \label{thm:DiskvsBidiskIntro}
                  Let $\lambda \in \C$ and $f \in C^{2}(\D)$ such that $\Delta_{\D} f=\lambda f$ in $\D$. Then there is a uniquely determined function $F \in \mathcal{H}(\D^2)$ such that $\Delta_{zw}F=\lambda F$ on $\D^2$ and $f(z)=F(z,\overline{z})$ for all $z \in \D$.  Moreover, the induced bijective linear map
                  $$ X_{\lambda}(\D) \to X_{\lambda}(\D^2)$$
                  is continuous.
		\end{theorem}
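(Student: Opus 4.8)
\emph{Reduction, and why continuity will be free.} The first step would be to observe that the continuity assertion is a soft consequence of the rest. Both $X_\lambda(\D)$ (closed in $C^2(\D)$, as recalled above) and $X_\lambda(\D^2)$ (the kernel of the continuous operator $F \mapsto \Delta_{zw}F - \lambda F$ on $\mathcal{H}(\D^2)$, hence closed) are Fr\'echet spaces, and the assignment $X_\lambda(\D^2) \to X_\lambda(\D)$, $F \mapsto \big(z \mapsto F(z,\overline{z})\big)$ --- the inverse of the map in the theorem --- is manifestly continuous, since locally uniform convergence of holomorphic functions on $\D^2$ forces locally uniform convergence of all their partial derivatives, and these govern the $C^2$--seminorms of the diagonal restriction on compact subsets of $\D$. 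Hence, once we know that $f \mapsto F$ is a bijection, the open mapping theorem for Fr\'echet spaces will give that it is a topological isomorphism, and the whole statement reduces to the existence and uniqueness of $F$.

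\emph{Uniqueness, and why the eigenvalue equation for $F$ is automatic.} Write $\delta := \{(z,\overline{z}) : z \in \D\}$. This ``diagonal'' is a set of uniqueness for $\mathcal{H}(\D^2)$: if $G \in \mathcal{H}(\D^2)$ vanishes on $\delta$, then expanding $G(z,w) = \sum_{j,k \ge 0} a_{jk} z^j w^k$ near the origin and using $G(z,\overline{z}) = \sum_{j,k} a_{jk} z^j \overline{z}^{k} \equiv 0$ forces $a_{jk} = 0$ for all $j,k$, so $G \equiv 0$ on the connected set $\D^2$ by the identity theorem. This yields the uniqueness of $F$ (and, as usual, the linearity of $f \mapsto F$), and it also disposes of the differential equation for $F$: for \emph{every} $G \in \mathcal{H}(\D^2)$ the elementary chain rule identity
\begin{equation*}
  \big(\Delta_{zw} G\big)(z,\overline{z}) \;=\; \Delta_{\D}\big(z \mapsto G(z,\overline{z})\big)(z), \qquad z \in \D,
\end{equation*}
holds --- both sides equal $4(1-|z|^2)^2\,(\partial_z \partial_w G)(z,\overline{z})$ --- so if $F \in \mathcal{H}(\D^2)$ satisfies $F(z,\overline{z}) = f(z)$, then $\Delta_{zw}F - \lambda F$ is holomorphic on $\D^2$ and restricts on $\delta$ to $\Delta_{\D}f - \lambda f = 0$, hence vanishes identically; that is, $F \in X_\lambda(\D^2)$ automatically. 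It therefore only remains to produce \emph{some} holomorphic extension of $f$ to all of $\D^2$ --- and since a generic real--analytic function on $\D$ need not extend holomorphically that far, this is exactly the point at which the eigenvalue equation enters.

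\emph{Building the extension from the Poisson--Fourier modes.} By elliptic regularity $f \in C^\infty(\D)$; I would then expand $f(re^{i\theta}) = \sum_{n \in \Z} a_n(r)\,e^{in\theta}$. Each $a_n$ solves on $(0,1)$ the radial reduction of $\Delta_{\D}u = \lambda u$, a linear second order ODE with real--analytic coefficients having a regular singular point at $r = 0$ with indicial exponents $\pm|n|$ (a double exponent $0$ when $n = 0$). Smoothness of $f$ forces $a_n(r) = O(r^{|n|})$ as $r \to 0$, so $a_n = c_n \phi_n$ is a scalar multiple of the Frobenius solution $\phi_n$ belonging to the exponent $|n|$; passing to the variable $r^2$ identifies this $\phi_n$ with a Gauss hypergeometric function $\F$ (up to an elementary factor) with explicitly $n$--dependent parameters. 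Thus $f(z) = \sum_n c_n e^\lambda_n(z)$, where the $e^\lambda_n$ are the \emph{Poisson--Fourier modes}; writing each $e^\lambda_n$ in the variables $z,\overline{z}$ and substituting $w$ for $\overline{z}$ yields a holomorphic function $E^\lambda_n$ on all of $\D^2$ (indeed on a large subdomain of $\{zw \ne 1\}$) with $E^\lambda_n(z,\overline{z}) = e^\lambda_n(z)$, and $E^\lambda_n \in X_\lambda(\D^2)$ by the previous paragraph applied to $E^\lambda_n$ (its diagonal restriction $e^\lambda_n$ lying in $X_\lambda(\D)$ because $\phi_n$ solves the mode equation). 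The natural candidate is then $F := \sum_n c_n E^\lambda_n$: provided this series converges locally uniformly on $\D^2$, it defines $F \in \mathcal{H}(\D^2)$ with $F(z,\overline{z}) = \sum_n c_n e^\lambda_n(z) = f(z)$ (the last series being nothing but the Fourier series of $f$), and then $F \in X_\lambda(\D^2)$, which finishes the proof.

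\emph{The main obstacle.} Everything thus comes down to the locally uniform convergence of $\sum_n c_n E^\lambda_n$ on $\D^2$. Fixing $r \in (0,1)$ and an auxiliary radius $\rho \in (r,1)$ (chosen, as one may, so that the hypergeometric factor $\Phi_n$ of $\phi_n$ does not vanish at $\rho^2$ for any $n$), one obtains on $\{(z,w) : |z| \le r,\ |w| \le r\}$ a bound of the shape
\begin{equation*}
  \big| c_n\, E^\lambda_n(z,w) \big| \;\le\; |a_n(\rho)| \cdot \Big(\frac{r}{\rho}\Big)^{|n|} \cdot \frac{\max_{|u| \le r^2} |\Phi_n(u)|}{|\Phi_n(\rho^2)|}\,.
\end{equation*}
Since $f \in C^\infty(\D)$ makes $|a_n(\rho)|$ decay faster than any power of $|n|$ while $(r/\rho)^{|n|}$ decays geometrically, the right-hand side will be summable in $n$ as soon as the ratio $\max_{|u| \le r^2}|\Phi_n(u)| \big/ |\Phi_n(\rho^2)|$ grows subexponentially in $n$ --- equivalently, as soon as the regular radial solutions $\phi_n$ admit appropriate uniform estimates as $n \to \infty$. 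Supplying such estimates for these ${}_2F_1$--functions with parameters tending to infinity is, I expect, the only real difficulty; it is precisely the hypergeometric analysis that underpins the spectral decomposition developed in the body of the paper, and once it is in hand the remaining steps above are routine.
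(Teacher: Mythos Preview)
Your proposal is correct and follows essentially the same route as the paper's own proof: reduce continuity to bijectivity via the open mapping theorem, get uniqueness from the diagonal identity principle (the paper's Lemma~\ref{lem:RangeIDprincpiple}), decompose $f$ into Poisson--Fourier modes, extend each mode holomorphically to $\D^2$, and then prove locally uniform convergence of the resulting series using a uniform-in-$n$ asymptotic for the hypergeometric factor. The ``main obstacle'' you isolate is exactly what the paper supplies as Lemma~\ref{lem:2F1Asymptotic}, namely $\Fz{\mu,1-\mu}{n+1}{\omega}\to 1$ locally uniformly as $n\to\infty$; with it in hand your ratio $\max_{|u|\le r^2}|\Phi_n(u)|/|\Phi_n(\rho^2)|$ is in fact \emph{bounded}, which is more than enough.

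The only noteworthy difference is bookkeeping: the paper imports both the modal decomposition of $f$ and the coefficient bound $\sum_n |c_n|\,r^{|n|}<\infty$ from Berenstein--Gay~\cite[Theorem~1.6.18]{BerensteinGay1995}, whereas you redo the modal decomposition via Frobenius analysis of the radial ODE and replace the coefficient bound by the rapid decay of the Fourier coefficients $a_n(\rho)$ of the smooth function $\theta\mapsto f(\rho e^{i\theta})$. Both lead to the same estimate, and your observation that $\Delta_{zw}F=\lambda F$ is automatic once $F\in\mathcal{H}(\D^2)$ restricts to $f$ on the diagonal is a clean way to organize the argument that the paper leaves implicit.
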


                \begin{remark}
                  If $f \in C^2(\D)$ solves $\Delta_{\D} f=\lambda f$ in $\D$, then $f$ is real--analytic (\cite[Theorem 4.2.5]{Rudin2008}, so there is trivially a holomorphic function $\tilde{F}$ defined on \textit{some} open neighborhood $U \subseteq \D^2$ of the diagonal $\{(z,\overline{z}) \in \D^2 \, : \, z \in \D$\} such that $\tilde{F}(z,\overline{z})=f(z)$ for all $z \in \D$ satisfying $(z,\overline{z}) \in U$. In view of a  well--known variant of the  identity principle (see Lemma \ref{lem:RangeIDprincpiple}) there is only one such holomorphic extension $\tilde{F} : U \to \C$ of $f \in C^2(\D)$ to $U$. The point of Theorem \ref{thm:DiskvsBidiskIntro} is that $\tilde{F}$ has a holomorphic extension (at least) to the bidisk~$\D^2$.
                  \end{remark}

                Theorem \ref{thm:DiskvsBidiskIntro}  gives rise to the following definition.

                \begin{definition}
                  Let $\lambda \in \D$. Then the continuous bijective linear mapping   $$ \mathcal{R}_h : X_{\lambda}(\D^2) \to X_{\lambda}(\D) \, , \qquad \mathcal{R}_h(F)(z):=F(z,\overline{z}) \quad (z \in \D) \, $$
is called the \textbf{hyperbolic restriction map}. Its continuous  inverse
$$ \mathcal{E}_h :={\left(\mathcal{R}_{h}\right)}^{-1} : X_{\lambda}(\D) \to X_{\lambda}(\D^2) \,  $$
is called the \textbf{hyperbolic extension map} from $X_{\lambda}(\D)$ to $X_{\lambda}(\D^2)$.
\end{definition}

The hyperbolic restriction and extension mappings provide the bridge between the holomorphic spectral theory of the invariant Laplacian $\Delta_{zw}$ and the smooth spectral theory of the hyperbolic Laplacian $\Delta_{\D}$.
In particular, one can study the spectral properties of $\Delta_{\D}$ on $\D$ from the viewpoint of complex analysis on the bidisk $\D^2$.
                Based on Theorem \ref{thm:DiskvsBidiskIntro}, we can now proceed to associate the fine structure of the eigenspaces $X_{\lambda}(\D)$ with the \textit{maximal domains of existence} of holomorphic eigenfunctions in $X_{\lambda}(\D^2)$, a concept which is defined as follows, cf.~\cite[p.~97]{FritzscheGrauert}.

    \begin{definition}
        \label{def:MaximalDomain}
        Let $F$ be a holomorphic function on the bidisk $\D^2$.
        A subdomain $D \subseteq \Omega$ that contains the bidisk~$\D^2$ is called a \textbf{maximal domain of existence of $F$} if the function $F$ has a holomorphic extension to $D$ but to no strictly larger subdomain of $\Omega$.
    \end{definition}

                  For our purposes, this definition is natural in several respects. First, the condition $D \supseteq \D^2$ obviously comes from Theorem \ref{thm:DiskvsBidiskIntro}. Second, the condition $D \subseteq \Omega$ is natural in view of the fact that every function which is holomorphic on a  subdomain of $\widehat{\C}^2$  that is strictly larger than~$\Omega$ is necessarily constant (see Theorem 5.3 in \cite{HeinsMouchaRoth1}). In particular, the largest possible maximal domain of existence of any  eigenfunction of $\Delta_{zw}$ in $\mathcal{H}(\D^2)$ is $\Omega$. Notably, this includes the constant eigenfunctions.  At first sight,  such a linguistic subtlety might seem irritating, but it facilitates the statement of many of our results.

                  \medskip

With this concept at hand, we can now  give a function--theoretic characterization of the exceptional eigenvalues of the hyperbolic Laplacian $\Delta_{\D}$ and the corresponding unique finite--dimensional non--trivial M{\"o}bius invariant subspaces $X_\lambda^0(\D)$ of the $\Delta_{\D}$--eigenspaces $X_{\lambda}(\D)$. In addition, the following theorem provides an equivalent condition in terms of existence of globally defined eigenfunctions of the spherical Laplacian $\Delta_{\widehat{\C}}$.
	\begin{theorem}[Exceptional smooth eigenfunctions $\Delta_{\D}$ on $\D$ vs.~holomorphic eigenfunctions of $\Delta_{zw}$ on $\Omega$ vs.~smooth eigenfunctions of $\Delta_{\widehat{\C}}$ on $\widehat{\C}$] \label{thm:2Intro}
    Let $\lambda \in \C$. Then the following conditions are pairwise equivalent:
  \begin{itemize}
    \item[(i)]    \label{item:ExceptionalOmegaExists}
    There exists a function $F \in X_{\lambda}(\D^2)$ with $\Omega$ as maximal domain of existence.
  \item[(ii)] \label{item:ExceptionalSphereExists}
  There exists a function $g \in C^{2}(\widehat{\C})$ such that $\Delta_{\widehat{\C}} g=\lambda g$ on $\widehat{\C}$.
  \item[(iii)] \label{item:ExceptionalEigenvalue}
  $\lambda$ is an exceptional eigenvalue of $\Delta_{\D}$, i.e.\ $\lambda=4 m (m+1)$ for some non--negative integer~$m$.
  \end{itemize}

  If one of these conditions is in place  and $\lambda=4m(m+1)$, then $\dim X_{\lambda}(\Omega)=2m+1$ and the following statements hold:
  \begin{itemize}
    \item[(a)] (Function--theoretic description of  $X_\lambda^0(\D)$)

   The domain  $\Omega$ is the maximal domain of existence for every $F \in X_\lambda^0(\D^2)$, i.e.\
   $$ X^0_{\lambda}(\D)=\mathcal{R}_h\left(X_{\lambda}(\Omega)\right) \, .$$
\item[(b)] (Function--theoretic description of the smooth eigenfunctions of $\Delta_{\widehat{\C}}$ on $\widehat{\C}$)
\begin{equation}
    \label{eq:SphericalEigenfunctions}
    \left\{ g \in C^2(\widehat{\C}) \, : \, \Delta_{\widehat{\C}} g=\lambda g \text{ on } \widehat{\C}\right\}=\big\{\widehat{\C} \ni z \mapsto F(z,-\overline{z}) \, : \, F \in X_{\lambda}(\Omega) \big\} \, .
\end{equation}
     \end{itemize}
\end{theorem}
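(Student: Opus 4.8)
The strategy is to factor the proof through explicit hypergeometric eigenfunctions. First I would construct, for each $\lambda\in\C$, the rotationally invariant holomorphic eigenfunction $E_\lambda$ on $\D^2$ solving $\Delta_{zw}F=\lambda F$ with $E_\lambda(0,0)=1$: writing $F(z,w)=\sum a_k (zw)^k$ and plugging into $4(1-zw)^2\partial_z\partial_w F=\lambda F$ yields a two-term recursion for $a_k$ whose solution is a $\F$ function in the variable $zw$, with parameters depending on $\lambda$ through a root of the indicial-type equation $4\alpha(\alpha-1)=\lambda$, i.e.\ $\lambda=4m(m+1)$ with $m=\alpha-1$ in the exceptional case. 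The key analytic input — which I expect the paper has already supplied in an earlier section on maximal domains — is the identification of the maximal domain of existence of $E_\lambda$: the $\F$ series in $\zeta=zw$ has radius of convergence $1$ and extends holomorphically across $\zeta=1$ precisely when the relevant Gauss hypergeometric function is a polynomial, which happens exactly when $\lambda=4m(m+1)$. This gives (i)$\Leftrightarrow$(iii) for the \emph{rotationally invariant} eigenfunction, and the general case (i) follows by combining with the spectral decomposition theorem (every $F\in X_\lambda(\D^2)$ is a locally uniform series of Poisson--Fourier modes, each a Möbius image / weighted version of $E_\lambda$): an individual Fourier mode extends to $\Omega$ iff $E_\lambda$ does, and the dimension count $\dim X_\lambda(\Omega)=2m+1$ drops out from counting which of the modes $P_{n}$, $|n|\le ?$, survive — the $2m+1$ matching the degrees $-m,\dots,m$ of the surviving spherical-harmonic-type modes.

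For (ii)$\Leftrightarrow$(iii) I would run the analogue of Theorem \ref{thm:DiskvsBidiskIntro} on the sphere side. A $C^2$ solution of $\Delta_{\widehat\C}g=\lambda g$ is real-analytic, so near the diagonal $\{(z,-\overline z)\}$ it has a holomorphic extension $\tF$ with $\tF(z,-\overline z)=g(z)$; the sign twist $w=-\overline z$ converts $\Delta_{\widehat\C}=-4(1+|z|^2)^2\partial_z\partial_{\overline z}$ into $4(1-zw)^2\partial_z\partial_w=\Delta_{zw}$ after matching $1+|z|^2=1-z(-\overline z)=1-zw$ and the sign. So $\tF$ is a $\Delta_{zw}$-eigenfunction; the point is that it is forced to be defined on all of $\Omega$. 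The reason: $g$ is globally defined and smooth \emph{including at} $z=\infty$, which means the extension is holomorphic in a neighborhood of both anti-diagonals $\{(z,-\overline z)\}$ and of the point at infinity, i.e.\ on a domain hitting all four ``boundary pieces'' of $\D^2$ inside $\widehat\C^2$; by the maximal-domain classification such an extension can only live on $\Omega$ itself, and an $\Omega$-defined eigenfunction exists iff $\lambda=4m(m+1)$. Conversely, given $\lambda=4m(m+1)$ and $F\in X_\lambda(\Omega)$, the function $z\mapsto F(z,-\overline z)$ is smooth on $\widehat\C$ (the anti-diagonal $\{(z,-\overline z):z\in\widehat\C\}$ is a compact subset of $\Omega$, avoiding $\{zw=1\}$ since $-|z|^2\ne1$) and satisfies the eigenequation by the same computation. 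This simultaneously proves (b): the map $F\mapsto F(\argument,-\overline{\argument})$ sends $X_\lambda(\Omega)$ \emph{onto} the spherical eigenspace, injectivity coming from the identity-principle variant (Lemma \ref{lem:RangeIDprincpiple}), and surjectivity from the extension argument just described.

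For part (a), the containment $\mathcal R_h(X_\lambda(\Omega))\subseteq X^0_\lambda(\D)$ is clear: $X_\lambda(\Omega)$ is finite-dimensional of dimension $2m+1$, it is invariant under the full Möbius group of $\widehat\C$ (in particular under $\mathcal M(\D)$, which acts on $\Omega$ diagonally by $(z,w)\mapsto(T(z),\overline{T}(1/\overline{\,\cdot\,})\dots)$ — more precisely the automorphisms of $\D^2$ preserving the diagonal), so $\mathcal R_h$ carries it to a $(2m+1)$-dimensional Möbius invariant subspace of $X_\lambda(\D)$, and by Rudin's Theorem \ref{thm:IntroRudin} there is only one such, namely $X^0_\lambda(\D)$. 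The reverse inclusion is where I expect the main work: one must show \emph{every} $F\in\mathcal E_h(X^0_\lambda(\D))$ extends to $\Omega$. I would argue by Möbius-invariance and the spectral decomposition — $X^0_\lambda(\D)$ is spanned by Möbius translates of the single rotationally invariant element whose extension to $\Omega$ we already established in the (i)$\Leftrightarrow$(iii) step, and the diagonal-preserving automorphisms of $\D^2$ extend to automorphisms of $\widehat\C^2$ preserving $\Omega$, hence carry an $\Omega$-defined function to an $\Omega$-defined function; by finite-dimensionality the whole span then lies in $X_\lambda(\Omega)$, giving $\dim X^0_\lambda(\D)\le\dim X_\lambda(\Omega)=2m+1=\dim X^0_\lambda(\D)$ and thus equality. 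The one genuinely delicate point throughout — and the main obstacle — is the passage between ``maximal domain is $\Omega$'' for the single explicit hypergeometric mode and the same statement for an arbitrary element of the eigenspace; this requires the earlier structural results (spectral decomposition into Poisson--Fourier modes plus the computation of each mode's maximal domain) to be in hand, and care that locally uniform convergence of the series on $\D^2$ upgrades to convergence on $\Omega$ exactly in the exceptional case, which is precisely the content that makes the theorem nontrivial.
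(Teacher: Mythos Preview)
Your handling of (i)$\Leftrightarrow$(iii) and of part (a) is sound and close to the paper's argument: the paper also reduces (i)$\Leftrightarrow$(iii) to the maximal--domain dichotomy for the individual Poisson--Fourier modes (its Theorem~\ref{thm:helgason} and Corollary~\ref{cor:EW}), and for (a) it identifies $X^0_\lambda(\D^2)$ explicitly as $\mathrm{span}\{P^{m+1}_n : |n|\le m\}$ via its own version of Rudin's theorem (Theorem~\ref{thm:RudinthmforOmega}); your shortcut of invoking Rudin's Theorem~\ref{thm:IntroRudin} directly and matching dimensions is a legitimate alternative.

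There is, however, a real gap in your argument for (ii)$\Rightarrow$(iii), equivalently for the surjectivity in part~(b). You claim that because $g\in C^2(\widehat{\C})$ is real--analytic, the local holomorphic extension $\tilde F$ near the anti--diagonal $\{(z,-\overline z)\}$ is ``forced to be defined on all of $\Omega$'' by ``the maximal--domain classification''. This does not follow. The maximal--domain dichotomy (Theorem~\ref{thm:helgason}) classifies the domains of the \emph{$n$--homogeneous} eigenfunctions; it says nothing about a holomorphic function defined merely on a tubular neighborhood of a totally real $2$--surface in $\widehat{\C}^2$. Such a neighborhood is not one of $\Omega_*,\Omega_\pm,\Omega$, and there is no general extension principle that promotes holomorphy near the anti--diagonal to holomorphy on $\Omega$. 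In particular you have not yet produced \emph{any} element of $X_\lambda(\Omega)$ from $g$, so you cannot appeal to (i)$\Leftrightarrow$(iii).

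The paper closes this gap differently (Theorem~\ref{thm:SpherevsBiPlane}): it never tries to extend $g$ holomorphically first. Instead it Fourier--decomposes $g$ directly on $\widehat{\C}$ into $n$--homogeneous pieces $f_n$, identifies each $f_n$ explicitly as a constant times $\Fz{\mu,1-\mu}{|n|+1}{\tfrac{|z|^2}{1+|z|^2}}\cdot z^{\pm|n|}$ (or $\overline z^{\pm|n|}$), and then uses the Gauss evaluation $\Fz{\mu,1-\mu}{|n|+1}{1}=\tfrac{\Gamma(|n|+1)\Gamma(|n|)}{\Gamma(|n|+1-\mu)\Gamma(|n|+\mu)}$ together with the $n=0$ logarithmic asymptotics to analyze the behavior as $z\to\infty$. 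Smoothness of $f_n$ at $\infty$ then forces either $c_n=0$ or $\mu\in\N$ with $\mu>|n|$; this is what quantizes $\lambda=4m(m+1)$ and simultaneously shows $c_n=0$ for $|n|>m$, so that $g(z)=F(z,-\overline z)$ for $F=\sum_{|n|\le m}c_n P^{m+1}_n\in X_\lambda(\Omega)$. If you want to rescue your extension approach, you would need exactly this analytic input: restrict your local $\tilde F$ to a small bidisk, spectrally decompose there, and then use the smoothness of $g$ at $\infty$ to kill all but finitely many modes --- which amounts to the paper's computation.
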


\begin{remark}
  \begin{itemize}
\item[(a)] Note that $X_0^0(\D^2)$ and $X_0(\Omega)$ consist precisely of the constant functions.
    \item[(b)]
      In view of Theorem \ref{thm:2Intro}, an eigenvalue $\lambda$ of $\Delta_{\D}$ is exceptional if and only if there exists a holomorphic eigenfunction of $\Delta_{zw}$ on $\D^2$ with the largest possible maximal domain of existence, the set $\Omega$.
    \item[(c)] In Theorem \ref{thm:2Intro} (a)  we think of $X_{\lambda}(\Omega)$ as a subspace of $X_{\lambda}(\D^2)$. It is a closed subspace of $X_{\lambda}(\D^2)$ (in the topology of $\mathcal{H}(\D^2)$) because it is finite dimensional.
    \item[(d)] (Spherical restriction map)\\
      If $\lambda \in \C$ is an exceptional eigenvalue, then the \textit{spherical restriction map}
      $$ \mathcal{R}_s : X_{\lambda}(\Omega) \to C^2(\widehat{\C}) \, , \qquad \mathcal{R}_s(F)(z):=F(z,-\overline{z}) \qquad (z \in \widehat{\C}) $$
      is well--defined. By Theorem \ref{thm:2Intro} (b), $\mathcal{R}_s$ is a bijection from $X_{\lambda}(\Omega)$ onto the $\lambda$--eigenspace
      $\big\{ g \in C^2(\widehat{\C}) \, : \, \Delta_{\widehat{\C}} g=\lambda g \text{ on } \widehat{\C}\big\}$  of $\Delta_{\widehat{\C}}$, which is clearly continuous.
\end{itemize}
\end{remark}

With Theorem \ref{thm:2Intro} we have reached two of our goals, a conceptual characterization of exceptional eigenvalues and the finite dimensional non--trivial M{\"o}bius invariant subspaces of the eigenspaces $X_{\lambda}(\D)$ of the hyperbolic Laplacian $\Delta_\D$. Next, we address the infinite dimensional non--trivial M{\"o}bius invariant subspaces of the $\Delta_{\D}$--eigenspaces $X_{\lambda}(\D)$. This turns out to be more difficult, and first requires clarification of the invariance properties of the Laplacian $\Delta_{zw}$. Implicitly, the underlying difficulty is already present in Theorem \ref{thm:2Intro}, and can be seen as follows. Let $\lambda \in \C$ be an exceptional eigenvalue of $\Delta_{\D}$. Then the finite--dimensional  M{\"o}bius invariant subspace $X^0_\lambda(\D)$ is invariant under all automorphisms of~$\D$. However, the corresponding eigenspace $X_{\lambda}(\Omega)$ for $\Delta_{zw}$, which consists of functions holomorphic on $\Omega$, is, loosely speaking, invariant under a much larger group of automorphisms. In fact, we  first note that the invariant Laplacian~$\Delta_{zw}$ of~$\Omega$ is \textit{not} invariant under all biholomorphic automorphisms of $\Omega$ in the sense that the invariance condition
\begin{equation} \label{eq:InvOmega}
  \Delta_{zw} \left( F \circ T \right)=\left(\Delta_{zw} F\right) \circ T \qquad \text{ for all } F \in \mathcal{H}(\Omega) \,
\end{equation}
holds  for all $T \in \Aut(\Omega)$.
The reason is simply that the automorphism group $\Aut(\Omega)$ is much too large, see \cite{HeinsMouchaRoth1}. However, $\Delta_{zw}$ \textit{is} invariant under the subgroup $\mathcal{M}$ of $\Aut(\Omega)$ defined by
 \begin{equation}\label{eq:OmegaAutomorphismIntro}
    \mathcal{M}
    :=
    \bigcup \limits_{ \psi \in \mathcal{M}(\widehat{\C})}
    \left\{ (z,w) \mapsto
        \left(
    \psi(z),\frac{1}{\psi(1/w)}
    \right)\, , \,
        (z,w) \mapsto
        \left(
    \psi(w),\frac{1}{\psi(1/z)}
    \right) \right\}\, ,
\end{equation}
where we write $\mathcal{M}(\widehat{\C})$ for the group of all M{\"o}bius transformations $\psi:\widehat{\C}\to\widehat{\C}$.
This is easy to prove by direct verification. (Conversely, one can show that every $T \in \Aut(\Omega)$ for  which the invariance property (\ref{eq:InvOmega}) holds does necessarily belong to the subgroup $\mathcal{M}$, see \cite[Theorem 5.2]{HeinsMouchaRoth1}, but we do not need such a result in this paper.) Since $\mathcal{M}$ is ``induced'' by  the set $\mathcal{M}(\widehat{\C})$ of all M{\"o}bius transformations, we call~$\mathcal{M}$ the \textit{M{\"o}bius group of $\Omega$}. Note that $\mathcal{M}(\widehat{\C})$ is strictly bigger than $\mathcal{M}(\D)$\footnote{$\mathcal{M}(\widehat{\C})$ is real six dimensional, while $\mathcal{M}(\D)$ is real three dimensional.}, so the M{\"o}bius group~$\mathcal{M}$ of $\Omega$ is strictly larger than the M{\"o}bius group $\mathcal{M}(\D)$ of $\D$. Now, while $X^0_\lambda(\D)$ is invariant under each element of $\mathcal{M}(\D)$, the set $X_{\lambda}(\Omega)$ is invariant under each element of $\mathcal{M}$, simply because $X_{\lambda}(\Omega)$ is the entire $\lambda$--eigenspace of the Laplacian $\Delta_{zw}$ on $\Omega$ and $\Delta_{zw}$ is invariant with respect to the M{\"o}bius group~$\mathcal{M}$.

\medskip

In view of this discussion, it is now clear that a suitable concept of invariance for the eigenspaces $X_{\lambda}(\D^2)$ of $\Delta_{zw}$ on the bidisk $\D^2$ has to be based on the group
$$\mathcal{M}(\D^2):=\Aut(\D^2) \cap \mathcal{M} \, ,$$
which we call the \textit{M{\"o}bius group of the bidisk $\D^2$}. It consists precisely of all automorphisms of the bidisk $\D^2$ which have the invariance property (\ref{eq:InvOmega}).
In fact, it is not difficult to show that
\begin{equation}
    \label{eq:OmegaAutomorphismBidisk}
    \mathcal{M}(\D^2) = \bigcup \limits_{ \psi \in \mathcal{M}(\D)}
    \left\{ (z,w) \mapsto
        \left(
    \psi(z),\frac{1}{\psi(1/w)}
    \right)\, , \,
        (z,w) \mapsto
        \left(
    \psi(w),\frac{1}{\psi(1/z)}
  \right) \right\}\,  .
\end{equation}

Clearly, each $\Delta_{zw}$--eigenspace $X_{\lambda}(\D^2)$ is invariant with respect to $\mathcal{M}(\D^2)$, that is, whenever ${F \in X_{\lambda}(\D^2)}$ and $T \in \mathcal{M}(\D^2)$, then $F \circ T \in X_{\lambda}(\D^2)$.
Each closed subspace  of $\mathcal{H}(\D^2)$  which is invariant under the M{\"o}bius group  $\mathcal{M}(\D^2)$  will be called a \textit{M{\"o}bius invariant subspace}. Theorem \ref{thm:DiskvsBidiskIntro} implies immediately the following result.

\begin{corollary}\label{cor:Intro1} Let $\lambda \in \C$ and $Y$ a subspace of $X_{\lambda}(\D)$. Then $Y$ is M{\"o}bius invariant if and only if    $\mathcal{E}_h(Y)$ is a  M{\"o}bius invariant subspace of $X_{\lambda}(\D^2)$.
  \end{corollary}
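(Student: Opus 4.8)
The plan is to reduce the statement to two facts, both consequences of Theorem~\ref{thm:DiskvsBidiskIntro}: that $\mathcal{E}_h$ is a topological isomorphism, and that it intertwines the $\mathcal{M}(\D^2)$--action on $X_\lambda(\D^2)$ with the action of the Möbius group on $X_\lambda(\D)$. For the first point, recall that $X_\lambda(\D)$ is closed in $C^2(\D)$ and $X_\lambda(\D^2)$ is closed in $\mathcal{H}(\D^2)$ (the operator $\Delta_{zw}$ being continuous for the compact--open topology, its $\lambda$--eigenspace is closed), while both $\mathcal{E}_h$ and $\mathcal{R}_h=\mathcal{E}_h^{-1}$ are continuous by Theorem~\ref{thm:DiskvsBidiskIntro}; hence $\mathcal{E}_h$ is a homeomorphism, and a subspace $Y\subseteq X_\lambda(\D)$ is closed exactly when $\mathcal{E}_h(Y)$ is. The closedness condition built into the definition of ``Möbius invariant subspace'' therefore transfers automatically, and it remains to match up the two invariance conditions.

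The heart of the proof is an intertwining identity for $\mathcal{R}_h$. For $\psi\in\mathcal{M}(\D)$ let $T_\psi(z,w)=\bigl(\psi(z),1/\psi(1/w)\bigr)\in\mathcal{M}(\D^2)$ be the automorphism of the first type in \eqref{eq:OmegaAutomorphismBidisk}. A one--line computation from $\psi(z)=e^{i\theta}(z-a)/(1-\overline a z)$ gives
\[
  \frac{1}{\psi(1/\overline z)}=\overline{\psi(z)}\qquad(z\in\D),
\]
which says precisely that $T_\psi$ maps the diagonal $\{(z,\overline z):z\in\D\}$ onto itself, acting there as $z\mapsto\psi(z)$. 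Hence $\mathcal{R}_h(F\circ T_\psi)(z)=F(\psi(z),\overline{\psi(z)})=\mathcal{R}_h(F)(\psi(z))$ for every $F\in X_\lambda(\D^2)$, i.e.\ $\mathcal{R}_h(F\circ T_\psi)=\mathcal{R}_h(F)\circ\psi$, equivalently $\mathcal{E}_h(f\circ\psi)=\mathcal{E}_h(f)\circ T_\psi$ for all $f\in X_\lambda(\D)$; this last equality can also be read off from the uniqueness part of Theorem~\ref{thm:DiskvsBidiskIntro} (Lemma~\ref{lem:RangeIDprincpiple}), since both sides lie in $X_\lambda(\D^2)$ and restrict to $f\circ\psi$ on the diagonal. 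The analogous computation for the coordinate interchange $\tau(z,w)=(w,z)$ — the second--type element of \eqref{eq:OmegaAutomorphismBidisk} attached to $\psi=\mathrm{id}$ — gives $\mathcal{R}_h(F\circ\tau)=\mathcal{R}_h(F)\circ\sigma$ with $\sigma(z)=\overline z$. Since every element of $\mathcal{M}(\D^2)$ is of the form $T_\psi$ or $T_\psi\circ\tau$, these two relations constitute the full dictionary translating the $\mathcal{M}(\D^2)$--action on $X_\lambda(\D^2)$ into the corresponding action by Möbius maps on $X_\lambda(\D)$.

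With this dictionary, the corollary follows by transporting invariance through the bijection $\mathcal{E}_h$. If $Y\subseteq X_\lambda(\D)$ is closed and $\mathcal{M}(\D)$--invariant, then $\mathcal{E}_h(Y)$ is closed and, for every $\psi\in\mathcal{M}(\D)$, $\mathcal{E}_h(Y)\circ T_\psi=\mathcal{E}_h(Y\circ\psi)\subseteq\mathcal{E}_h(Y)$; feeding in the analogous relation for $\tau$ handles the remaining, coordinate--swapping part of $\mathcal{M}(\D^2)$, and one concludes that $\mathcal{E}_h(Y)$ is a Möbius invariant subspace of $X_\lambda(\D^2)$. The reverse implication is obtained by applying $\mathcal{R}_h=\mathcal{E}_h^{-1}$ to an $\mathcal{M}(\D^2)$--invariant subspace and reading the same identities in the opposite direction. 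I expect essentially all of the substance to lie in the intertwining step, and within it the decisive ingredient is the elementary identity $1/\psi(1/\overline z)=\overline{\psi(z)}$ together with the rigidity of holomorphic extension from the diagonal to $\D^2$ guaranteed by Theorem~\ref{thm:DiskvsBidiskIntro}; everything else is formal.
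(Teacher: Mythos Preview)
Your argument is the natural one and is what the paper has in mind: it offers no proof beyond ``Theorem~\ref{thm:DiskvsBidiskIntro} implies immediately'', and your intertwining identity $\mathcal{E}_h(f\circ\psi)=\mathcal{E}_h(f)\circ T_\psi$ (derived from $1/\psi(1/\overline z)=\overline{\psi(z)}$ and the uniqueness in Theorem~\ref{thm:DiskvsBidiskIntro}) together with the fact that $\mathcal{E}_h$ is a homeomorphism is exactly the intended content.

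There is, however, a point you glide over. You correctly compute $\mathcal{R}_h(F\circ\tau)=\mathcal{R}_h(F)\circ\sigma$ with $\sigma(z)=\overline z$, but $\sigma$ is \emph{not} an element of $\mathcal{M}(\D)$ as the paper defines it (biholomorphic automorphisms only). Hence from $Y\circ\psi\subseteq Y$ for all $\psi\in\mathcal{M}(\D)$ you cannot conclude $Y\circ\sigma\subseteq Y$, and therefore not $\mathcal{E}_h(Y)\circ\tau\subseteq\mathcal{E}_h(Y)$. This is not cosmetic: by Remark~\ref{rem:PFMproperties}(c) one has $P_n^\mu\circ\tau=P_{-n}^\mu$, so the spaces $X_\lambda^+(\D^2)$ and $X_\lambda^-(\D^2)$ of Theorem~\ref{thm:RudinthmforOmega} are \emph{interchanged} by $\tau$ and neither is invariant under the full group $\mathcal{M}(\D^2)$ of \eqref{eq:OmegaAutomorphismBidisk}. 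The tension here lies already in the paper's conventions; in practice the invariance actually used (see the proof of Theorem~\ref{thm:RudinthmforOmega}, Step~2) is only under the first--type maps $T_\psi$, and for that index--two subgroup your argument is complete. Your sentence ``feeding in the analogous relation for $\tau$ handles the remaining part'' should therefore be either removed or replaced by an explicit remark on this point.
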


  We can now give a function--theoretic characterization of the M{\"o}bius invariant subspaces of $X_{\lambda}(\D^2)$ and thereby, in view of
  Corollary \ref{cor:Intro1}, the M{\"o}bius invariant subspaces of $X_{\lambda}(\D)$.
  The following subdomains of $\Omega$ play the essential role for this purpose:
  			\begin{align*}
                \Omega_+
				&\coloneqq
				\Omega
				\setminus
				\big\{
				(z,\infty)
				\colon
				z \in \hat{\C}
				\big\} \, .\\
                \Omega_-
				&\coloneqq
				\Omega
				\setminus
				\big\{
				(\infty,w)
				\colon
				w \in \hat{\C}
				\big\}\, .\\
                \Omega_*\, & := \left\{(z,w) \in \C^2\,:\, zw \in \C \setminus [1,\infty)\right\} \, .
			\end{align*}
                Note that each of these three subdomains of $\Omega$ contains the bidisk $\D^2$. Moreover,
                $$ \Omega=\Omega_+ \cup \Omega_- \cup \{ (\infty,\infty)\}$$ and
                        $$\mathcal{H}(\Omega)=\mathcal{H}(\Omega_+) \cap \mathcal{H}(\Omega_-) \, .$$
                         Figure~\ref{fig:OmegaPM} provides a schematic view of $\Omega$ and its distinguished subsets.

	\begin{figure}[h]
		\begin{minipage}{4\textwidth/18}
			\centering
			\includegraphics[width = \textwidth]{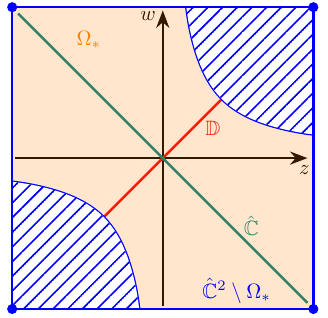}
		\end{minipage}
		\begin{minipage}{4\textwidth/18}
			\centering
			\includegraphics[width = \textwidth]{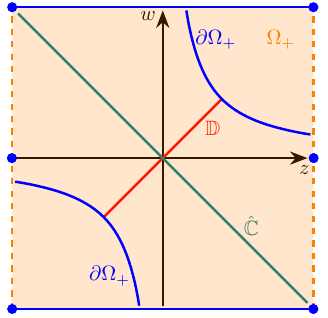}
		\end{minipage}
		\begin{minipage}{4\textwidth/18}
			\centering
			\includegraphics[width = \textwidth]{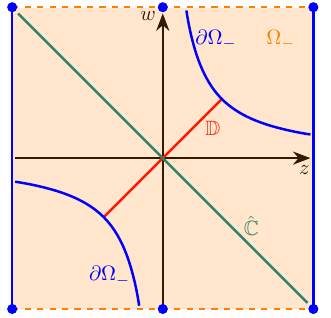}
		\end{minipage}
		\begin{minipage}{4\textwidth/18}
			\centering
			\includegraphics[width = \textwidth]{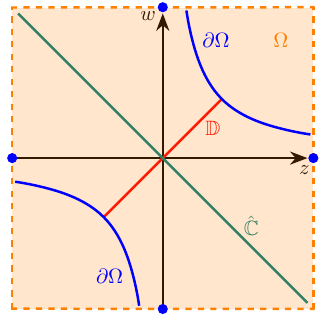}
		\end{minipage}
		\caption{Schematic picture of the sets $\Omega_*$, $\Omega_+$, $\Omega_-$ and $\Omega$ (from left to right) with points at infinity. Here, $\D$ is identified with the diagonal $\{(z,\overline{z}) \, : \, z \in \D\}$ and $\widehat{\C}$ with the rotated diagonal $\{(z,-\overline{z}) \, : \, z \in \widehat{\C}\}$.}
        \label{fig:OmegaPM}
              \end{figure}

We note in passing that the subdomains $\Omega_+$ and $\Omega_-$ arise naturally in the study of the Fr\'echet space structure of $\mathcal{H}(\Omega)$ (see \cite{HeinsMouchaRoth1}) and also for studying invariant differential operators of Peschl--Minda type acting on $\mathcal{H}(\Omega)$ (see \cite{HeinsMouchaRoth2}). The following result shows that they are also useful for describing the M{\"o}bius invariant subspaces of the eigenspaces of  the invariant Laplacian $\Delta_{zw}$. We use the following terminology:
		\begin{definition}
			Let $U \subseteq V$ be subdomains of some complex manifold, and
			let $Y \subseteq X$ be subsets of $\mathcal{H}(U)$. We say that
			\textbf{``$X \cap \mathcal{H}(V)$ is dense in $Y$''} if $X \cap \mathcal{H}(V) \subseteq Y$ and  if every function in $Y$ can be approximated locally uniformly on $U$ by functions in $X$, which have a holomorphic extension to~$V$.
		\end{definition}

                We are now, finally, in a position to formulate the main result of this paper.

		\begin{theorem}
			\label{thm:RudinIntro}
			Let $\lambda \in \C$ and let $Y$ be a non--trivial M{\"o}bius invariant subspace of
                        $X_{\lambda}(\D^2)$.
			Then one and only one of the following four alternatives holds.
			\begin{itemize}
				\item[($\text{E}_0$\,)]
				\label{item:E0}
				$Y=X_{\lambda}(\D^2) \cap \mathcal{H}(\Omega)$ and $\dim Y<\infty$.
				\item[($\text{E}_+$)]
				\label{item:E+}
				$X_{\lambda}(\D^2) \cap \mathcal{H}(\Omega_+)$ is dense in $Y$.
				\item[($\text{E}_-$)]
				\label{item:E-}
				$X_{\lambda}(\D^2) \cap \mathcal{H}(\Omega_-)$ is dense in $Y$.
				\item[(NE)]
				\label{item:NE}
				$X_{\lambda}(\D^2) \cap \mathcal{H}(\Omega_*)$ is dense in $Y$. In this case $Y=X_{\lambda}(\D^2)$.
			\end{itemize}
			In addition,
			\begin{itemize}
				\item[(i)] {\rm ($\textrm{E}_0$\,)}, {\rm ($\textrm{E}_+$\,)} and {\rm ($\textrm{E}_-$\,)} cannot occur if $\lambda\not=4m(m+1)$ for every non--negative integer $m$.
				\item[(ii)] {\rm ($\textrm{E}_0$\,)} holds if and only if $\dim Y<\infty$. In this case, $\dim Y=2m+1$ and $\lambda=4m(m+1)$ for some non--negative integer $m$.
				\item[(iii)] None of the density statements {\rm ($\textrm{NE}$\,)}, {\rm ($\textrm{E}_+$\,)} and {\rm ($\textrm{E}_-$\,)} may be improved to equalities.
			\end{itemize}
                      \end{theorem}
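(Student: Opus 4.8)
The plan is to reduce everything to a careful analysis of the Poisson Fourier modes (the distinguished hypergeometric eigenfunctions) together with their maximal domains of existence, which by the earlier part of the paper are known to be precisely $\Omega$, $\Omega_+$, $\Omega_-$, or $\Omega_*$, depending on $\lambda$ and on the Fourier index $n$. Concretely, I would first fix $\lambda$ and let $e_n^\lambda$ denote the $n$-th Poisson Fourier mode in $X_\lambda(\D^2)$; by the spectral decomposition theorem already established, every $F \in X_\lambda(\D^2)$ has a unique expansion $F = \sum_{n\in\Z} c_n e_n^\lambda$ converging locally uniformly on $\D^2$, and — because the rotation subgroup of $\mathcal{M}(\D^2)$ acts on $e_n^\lambda$ by the character $n$ — a closed subspace $Y$ is rotation-invariant if and only if it is ``spanned'' (in the closed-subspace sense) by a subset $S = S(Y) \subseteq \Z$ of Fourier indices, i.e. $Y = \overline{\mathrm{span}}\{e_n^\lambda : n \in S\}$ and membership in $Y$ is detected coefficient-wise. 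This turns the classification of rotation-invariant closed subspaces into a purely combinatorial statement about subsets of $\Z$.

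Next I would bring in the full M\"obius group $\mathcal{M}(\D^2)$. The key computation — essentially the one Rudin performs in the real setting, now done holomorphically on the bidisk — is to determine how a generic (non-rotational) element of $\mathcal{M}(\D)$, lifted to $\mathcal{M}(\D^2)$ via \eqref{eq:OmegaAutomorphismBidisk}, acts on a single mode $e_n^\lambda$: one expands $e_n^\lambda \circ T$ back into the basis $\{e_m^\lambda\}$ and reads off which coefficients are nonzero. This ``mixing pattern'' is governed by contiguous-relation/three-term recurrences for ${}_2F_1$, and it is exactly here that the exceptional values $\lambda = 4m(m+1)$ enter: for generic $\lambda$ the action of $T$ couples \emph{all} indices $n\in\Z$, forcing $S(Y)=\Z$ and hence $Y = X_\lambda(\D^2)$, whereas at an exceptional value certain recurrence coefficients vanish and $\Z$ breaks into M\"obius-invariant blocks, namely $\{-m,\dots,m\}$ (the finite-dimensional block, matching $X_\lambda(\Omega)$ and $\dim = 2m+1$), together with $\{n : n > m\} \cup (\text{the finite block})$ and its mirror $\{n < -m\} \cup (\text{the finite block})$ — giving exactly the three non-trivial invariant subspaces of Rudin's Theorem~\ref{thm:IntroRudin}. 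I would identify these three combinatorial blocks with $X_\lambda(\D^2)\cap\mathcal{H}(\Omega)$, $X_\lambda(\D^2)\cap\mathcal{H}(\Omega_+)$, $X_\lambda(\D^2)\cap\mathcal{H}(\Omega_-)$ respectively by matching, mode by mode, the maximal domain of existence of $e_n^\lambda$ established earlier: a mode extends holomorphically to $\Omega_+$ precisely when $n \ge -m$ (say), to $\Omega_-$ precisely when $n \le m$, to $\Omega$ precisely when $|n|\le m$, and otherwise only to $\Omega_*$.

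With the combinatorics in hand, the four alternatives of Theorem~\ref{thm:RudinIntro} and the density statements follow. For $(\mathrm{NE})$: since $\mathcal{H}(\Omega_*) \supseteq \mathcal{H}(\D^2)$ is wrong in general, I instead argue that the finite partial sums $\sum_{|n|\le N} c_n e_n^\lambda$ of any $F\in X_\lambda(\D^2)$ — or rather suitable Ces\`aro/Abel means of them, to guarantee convergence in $\mathcal{H}(\D^2)$ — lie in $X_\lambda(\D^2)\cap\mathcal{H}(\Omega_*)$ because each individual mode extends at least to $\Omega_*$, so $X_\lambda(\D^2)\cap\mathcal{H}(\Omega_*)$ is dense in $X_\lambda(\D^2)$; combined with $S(Y)=\Z\Rightarrow Y=X_\lambda(\D^2)$ this gives the claim. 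The analogous truncation argument, restricting to indices $\ge -m$ resp.\ $\le m$, gives $(\mathrm{E}_+)$ and $(\mathrm{E}_-)$. Alternative $(\mathrm{E}_0)$ is the finite-dimensional block, and the equivalence $(\mathrm{E}_0) \Leftrightarrow \dim Y < \infty$ together with $\dim Y = 2m+1$ is then immediate from Theorem~\ref{thm:2Intro}, giving (i) and (ii). For (iii), that the inclusions $X_\lambda(\D^2)\cap\mathcal{H}(\Omega_*) \subsetneq X_\lambda(\D^2)$ etc.\ are strict, one exhibits an explicit $F$ — e.g.\ a lacunary or suitably fast-growing series $\sum c_n e_n^\lambda$ — whose coefficients grow fast enough that $F$ fails to extend past $\D^2$ at all (so it is not even in $\mathcal{H}(\Omega_*)$), which is possible because the domains of convergence of such series on the larger domains are strictly smaller; uniqueness of the alternative holds because the index sets $\{|n|\le m\}$, $\{n\ge -m\}$, $\{n\le m\}$, $\Z$ are pairwise distinct.

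The main obstacle is the explicit M\"obius-mixing computation for the modes $e_n^\lambda \circ T$: one must show cleanly that for non-exceptional $\lambda$ a \emph{single} non-rotational automorphism already couples every pair of indices (so that the only rotation-and-M\"obius-invariant index set is $\Z$), and that for exceptional $\lambda$ the coupling degenerates in \emph{exactly} the way that produces the three blocks and no others. This is the holomorphic analogue of the hardest part of Rudin's argument; I expect it to hinge on a clean closed form for the connection coefficients between $\{e_n^\lambda \circ T\}$ and $\{e_m^\lambda\}$ in terms of Jacobi-polynomial-type expressions, with the vanishing at $\lambda = 4m(m+1)$ traced back to a factor like $\lambda - 4m(m+1)$ (equivalently a Pochhammer symbol hitting a nonpositive integer) in those coefficients. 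A secondary technical point is ensuring all the approximation/truncation arguments converge in the topology of $\mathcal{H}(\D^2)$ rather than merely formally, which the earlier locally-uniform-and-absolute convergence of the spectral decomposition should handle, possibly after inserting Abel or Ces\`aro summation.
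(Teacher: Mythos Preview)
Your overall architecture is correct and matches the paper's: reduce to subsets $S\subseteq\Z$ via rotation invariance and the spectral decomposition, then determine which $S$ are stable under the rest of $\mathcal{M}(\D^2)$, and finally read off the maximal domains mode by mode to get the $\Omega$, $\Omega_\pm$, $\Omega_*$ descriptions. The density and strictness parts (your truncation argument and the explicit non-extendable series) are also how the paper proceeds, via Theorem~\ref{thm:Helgason} and Corollary~\ref{lem:NoContinuation}; incidentally, no Ces\`aro or Abel means are needed, since Theorem~\ref{thm:Helgason} already gives absolute locally uniform convergence of the partial sums in $\mathcal{H}(\D^2)$.

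The one genuine difference is in what you flag as the ``main obstacle''. You propose to compute the full connection coefficients of $e_n^\lambda\circ T$ in the basis $\{e_m^\lambda\}$ for a generic $T\in\mathcal{M}(\D^2)$ and to show that for non-exceptional $\lambda$ a single non-rotational $T$ couples every pair of indices. The paper avoids this entirely by passing to the infinitesimal action: it introduces first-order operators $D^\pm$ (essentially $\partial_u|_{u=0}$ of $f\circ T_{u,0}$ and its mirror) which, because $Y$ is closed and M\"obius invariant, map $Y$ to $Y$, and which act on the PFMs as pure index shifts $D^+P_n^\mu=(\mu-n-1)P_{n+1}^\mu$, $D^-P_n^\mu=(\mu+n+1)P_{n-1}^\mu$. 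The Pochhammer vanishing you anticipate then appears transparently as the single scalar $\mu\mp n\mp 1$ hitting zero, and the classification of admissible index sets $S$ drops out immediately without ever needing non-vanishing of infinite-sum connection coefficients. Your approach would work but is harder to make rigorous at exactly the point you identify; the paper still needs the full pullback formula (your computation), but only for the \emph{converse} step of checking that the three candidate spaces are genuinely M\"obius invariant, where mere containment of $e_n^\lambda\circ T$ in the closed span suffices and no non-vanishing is required.
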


                  		\begin{remark}
			Recall that two domains $U \subseteq V$ in $\C^n$ form a Runge pair $(U,V)$ if every function in $\mathcal{H}(U)$ can be approximated locally uniformly in $U$ by functions in $\mathcal{H}(V)$. Identifying domains $U\subseteq V$ as Runge pairs is a fundamental and in many cases challenging problem in complex analysis. Note that
			in our terminology $(U,V)$ is a Runge  pair if and only if  $\mathcal{H}(U) \cap \mathcal{H}(V)$ is dense in $\mathcal{H}(U)$. For  subspaces $Y \subseteq X$ of $\mathcal{H}(U)$
			it is tempting to call $(Y,X,\mathcal{H}(V))$ a \textit{Runge triple}, if $X \cap \mathcal{H}(V)$ is dense in $Y$.  Then for every non--trivial infinite dimensional  M{\"o}bius invariant subspace $Y$ of $X_{\lambda}(\D^2)$ exactly one of the triples
 $$\left(Y,X_{\lambda}(\D^2),\mathcal{H}(\Omega_+)\right) \, ,\qquad  \left(Y,X_{\lambda}(\D^2),\mathcal{H}(\Omega_-)\right) \, ,\qquad  \left(Y,X_{\lambda}(\D^2),\mathcal{H}(\Omega_*)\right) $$
                          is a Runge triple.			Theorem \ref{thm:RudinIntro} can therefore be regarded as a Runge--type approximation theorem for the M{\"o}bius invariant spaces of eigenfunctions of the invariant Laplacian $\Delta_{zw}$.
                      \end{remark}

                      The plan of the paper is as follows. We introduce some basic concepts and notation in a preliminary Section \ref{sec:prelim}.
                      In Section \ref{sec:RudinsODE} and Section \ref{sec:Helgason} we develop the general spectral theory of the invariant Laplacian $\Delta_{zw}$ on $\Omega$ in analogy to the well--established spectral theory of the hyperbolic Laplacian $\Delta_\D$ on $\D$. In a sense, we  rather closely follow the presentation Berenstein and Gay \cite[Section 1.6]{BerensteinGay1995} have given for the spectral theory of the hyperbolic Laplacian, but we have made an effort either to provide even more rigorous proofs or to give precise references to the literature for all  auxiliary results which are needed. In contrast to \cite{BerensteinGay1995} we completely work in the holomorphic setting. On the one hand, this makes it possible to take advantage of many efficient tools from complex analysis which are not available otherwise. On the other hand, we need to incorporate from the beginning  the maximal domain of existence of eigenfunctions; an issue which does not even show up when working ``only'' on the unit disk. Here, our approach requires some finer analysis of the building blocks of the eigenfunctions, namely certain  hypergeometric functions and their integral representations in terms of Poisson Fourier modes.

        In Section \ref{sec:CompareRudin Helgson} we prove Theorem \ref{thm:DiskvsBidiskIntro} and show that it is in some sense best possible by providing an explicit example. This implies that the smooth spectral theory of the hyperbolic Laplacian $\Delta_{\D}$ on the disk $\D$ and the holomorphic spectral theory of the Laplacian $\Delta_{zw}$ on the bidisk $\D^2$ are essentially equivalent. In the same spirit we relate the smooth spectral theory of the spherical Laplacian $\Delta_{\widehat{\C}}$ with the holomorphic spectral theory of the Laplacian $\Delta_{zw}$ on $\Omega$ as well as the exceptional eigenvalues of the hyperbolic Laplacian $\Delta_{\D}$, see Theorem \ref{thm:SpherevsBiPlane}.
        Section \ref{sec:Pullback} is devoted to a study of the transformation behavior of the Poisson Fourier modes under precompositions with elements of the M{\"o}bius group $\mathcal{M}$. These results are needed for Section \ref{sec:PFandRudin} where we prove our main results,  Theorem \ref{thm:2Intro} and Theorem \ref{thm:RudinIntro}. By and large, we  follow Rudin's \cite{Rudin84} treatment of invariant subspaces of eigenfunctions of the hyperbolic Laplacian, but again completely working in the holomorphic setting; we briefly comment on the similarities and differences between our and Rudin's approach in Remark \ref{rem:RudintheoremOmega}. We close the paper with Section \ref{sec:PoissonFouriervsPeschlMinda} which connects the Poisson Fourier modes to the invariant differential operators of Peschl--Minda type studied in  \cite{HeinsMouchaRoth2}.

\medskip

Four final preliminary remarks are in order. First, treating the hyperbolic eigenvalue equation $\Delta_{\D}f=\lambda f$ and the spherical eigenvalue equation $\Delta_{\widehat{\C}} f=\lambda f$ as special cases of the more general complex eigenvalue equation $\Delta_{zw} F=\lambda F$ has been a recurrent theme in the literature for a long time.  To mention but a few of the many references, we refer for instance to the papers \cite{BauerI, BauerII, BauerRuscheweyh} and their bibliographies.  What seems to be new is the systematic study of the maximal domains of existence of the holomorphic solutions of $\Delta_{zw} F=\lambda F$ and its ramifications for the study of the invariant subspaces of the $\Delta_{\D}$--eigenspaces.
As a second remark, we should point out that Rudin's work \cite{Rudin84} is in fact concerned with the invariant Laplace operator on the unit ball of~$\C^n$, while our focus is exclusively on the complex one--dimensional case $n=1$. Thirdly, even though we are superficially dealing with holomorphic functions of two complex variables, we only need very few and only elementary facts from the theory of several complex variables.
Finally, Helgason \cite{Helgason1959,Helgason70,Helgason84} has systematically studied invariant differential operators and their eigenvalue problem in the setting of homogeneous spaces. In contrast to our holomorphic approach, he used entirely real methods. This Lie theoretic approach has since been generalized significantly. While providing a comprehensive list of references would go beyond what we can achieve here, we would like to mention \cite{Kashiwara,Koryani}, who generalized the theory to higher dimensional symmetric spaces, and Maa{\ss} \cite{Maass1949}, who initiated the vast and fruitful research of Maa{\ss} wave forms.

 \section{Notation and Preliminaries} \label{sec:prelim}

We denote the open unit disk in $\C$ by $\D$, the bidisk $\D^2:=\D\times\D$ and the Riemann sphere by~$\widehat{\C}$. The open disk of radius $r>0$ centered at the origin is denoted by $\D_r$.
   Moreover, we write $\N \coloneqq \{1, 2, \ldots\}$ for the set of positive integers, $\N_0 \coloneqq \N \cup \{0\}$ and $\Z$ for the set of all integers.
  For an open subset $U$ of $\widehat{\C}$ or $\widehat{\C}^2$, we write $\partial U$ for its boundary and $\cc{U}$ for its closure. The set of all twice resp.\ infinitely (real) differentiable functions $f:U\to\C$ is denoted $C^2(U)$ resp.\ $C^\infty(U)$, and we write $\mathcal{H}(U)$ for set of all holomorphic functions $f:U\to\C$.\\
  The set $\Omega=\widehat{\C}^2 \setminus \{(z,w) \in \widehat{\C}^2 \, : \, z \cdot w \not=1\}$  is a complex manifold of complex dimension $2$ and an open  submanifold of $\widehat{\C}^2$. In order to describe the complex structure of $\Omega$ only two charts are necessary, the \textit{standard chart}
  $$ \Omega \cap (\C \times \C) \to \C^2 \, , \qquad  (u,v) \mapsto (u,v)$$
  and the \textit{flip chart}
  $$ \Omega \cap \left( \widehat{\C} \setminus \{0\} \times \widehat{\C} \setminus \{0\} \right) \to \C^2 \, , \qquad (u,v) \mapsto (1/v,1/u) \, .$$
  In these local coordinates the \textit{invariant Laplace operator} $\Delta_{zw}$ of the complex manifold $\Omega$ is then  given by
  $$ \Delta_{zw} f(z,w)=4 (1-zw)^2 \partial_z \partial_w f(z,w) \, ,$$
  and it is easily seen that $\Delta_{zw}$  is a well--defined object. Here, $\partial_z$ and $\partial_w$ denote the Wirtinger derivatives with respect to $z$ and $w$, respectively.\\

{One of the few elementary results from the theory of functions of several complex variables we need in this paper is the following simple lemma.
 \begin{lemma}[``Two variable identity principle'' (p.~18 in \cite{Range})]\label{lem:RangeIDprincpiple}
Let $U$ be a subdomain of $\C^2$ which contains a point of the form $(z,\cc{z})$ resp.~$(z,-\cc{z})$. Let $f : U \to \C$ be a holomorphic function such that $$\{f(z,\overline{z}) \, : \, (z,\overline{z}) \in U\}=\{0\}\quad\text{resp.}\quad \{f(z,-\overline{z}) \, : \, (z,-\overline{z}) \in U\}=\{0\}\,.$$ Then $f \equiv 0$.
 \end{lemma}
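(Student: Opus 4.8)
The plan is to reduce the two--variable assertion to the classical one--variable identity principle by a slicing argument. I will treat the case of a point $(z_0,\overline{z_0}) \in U$; the case $(z_0,-\overline{z_0})$ is entirely analogous (replace $w$ by $-w$ throughout, or apply the first case to $(z,w) \mapsto f(z,-w)$). So suppose $f : U \to \C$ is holomorphic, $U \subseteq \C^2$ is a domain containing some $(z_0,\overline{z_0})$, and $f$ vanishes identically on the ``anti--diagonal slice'' $\{(z,\overline{z}) : (z,\overline{z}) \in U\}$.

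First I would set up the local picture near $(z_0,\overline{z_0})$. Pick $r>0$ small enough that the polydisk $P := \D_r(z_0) \times \D_r(\overline{z_0})$ is contained in $U$. For each fixed point of the form $(z,\overline{z})$ with $z \in \D_r(z_0)$, the function $g(z) := f(z,\overline{z})$ is the restriction of $f$ to a totally real slice, and by hypothesis $g \equiv 0$ there. The key observation is the following: for a fixed $a \in \D_r(z_0)$, expand $f$ on $P$ as a convergent power series $f(z,w) = \sum_{j,k \ge 0} c_{jk}(z-z_0)^j (w-\overline{z_0})^k$. Substituting $w = \overline{z}$ and writing $z - z_0 = \zeta$, so that $w - \overline{z_0} = \overline{\zeta}$, the vanishing of $g$ says $\sum_{j,k} c_{jk} \zeta^j \overline{\zeta}^k = 0$ for all $\zeta$ in a neighborhood of $0$. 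Since the monomials $\zeta^j \overline{\zeta}^k$ are linearly independent as functions on any open subset of $\C \cong \R^2$ (this is the standard fact that a real--analytic function of $(\zeta,\overline\zeta)$ vanishing on an open set has all Taylor coefficients zero), we conclude $c_{jk} = 0$ for all $j,k$. Hence $f \equiv 0$ on $P$.

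Finally I would propagate the vanishing from the polydisk $P$ to all of $U$ by the connectedness of $U$ together with the ordinary identity theorem for holomorphic functions of several variables: the set where $f$ vanishes to infinite order (equivalently, the interior of $f^{-1}(0)$, by real--analyticity) is open, nonempty (it contains $P$), and closed in $U$, so it is all of $U$. Therefore $f \equiv 0$ on $U$.

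The only slightly delicate point — the ``main obstacle,'' such as it is — is the linear--independence step: one must be careful that the substitution $w = \overline{z}$ genuinely forces $c_{jk} = 0$ rather than merely some relation among them. This is precisely where it matters that the slice $\{(z,\overline z)\}$ is maximally totally real in $\C^2$; the cleanest way to phrase it is that the map $\zeta \mapsto (\zeta,\overline\zeta)$ has image a totally real, maximally real $2$--plane, and a holomorphic function vanishing on such a set near one of its points vanishes identically near that point. Alternatively, one can invoke directly that $f(z,\overline z)$ being real--analytic in $(\mathrm{Re}\,z,\mathrm{Im}\,z)$ and identically zero forces, via the uniqueness of power--series expansions, each $c_{jk}$ to vanish. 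Everything else is a routine application of the connectedness/identity--theorem machinery.
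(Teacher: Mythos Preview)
Your argument is correct and is exactly the standard proof: vanishing on the maximally totally real slice $\{(z,\overline{z})\}$ kills all Taylor coefficients at a point of that slice, and the identity theorem propagates. The paper does not supply its own proof of this lemma but simply cites \cite[p.~18]{Range}, where precisely this argument appears; there is nothing to compare.
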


Finally, we briefly recall some standard terminology from linear algebra. We denote by $\mathrm{span}\,\,M$ the collection of all \textit{finite} linear combinations of elements of a subset $M$ of~a given vector space~$X$. The vector spaces that occur in this paper are spaces of smooth or holomorphic functions defined on some open subset $U$ of $\Omega$ which we equip with the standard topology of uniform convergence on compact subsets of $U$.
If $M$ denotes a set of smooth or holomorphic functions on $U$, we denote by $\clos{U}M$ the closure of $M$ with respect to locally uniform convergence on $U$.

	\section{Homogeneous eigenfunctions and Poisson  Fourier modes}\label{sec:RudinsODE}\

By making a separation of variables approach Rudin \cite{Rudin84} showed that every $\lambda\in\C$ is an eigenvalue of $\Delta_\D$. The analogous result is true for $\Delta_{zw}$: let $n \in \Z$ and suppose $f_n : D \to \C$ is a holomorphic function defined on a domain $D \subseteq \C^2$ containing the origin $(0,0)$. Further assume that $f_n$ is \textit{$n$--homogeneous}, i.e.,
\begin{equation} \label{eq:3.1}
f_n(\eta z,w/\eta)=\eta^n f_n(z,w) \qquad \text{for all } \eta \in \partial \D
\end{equation}
and for all $(z,w) \in \C^2$ belonging to the bidisk $\D_r^2=\D_r \times \D_r$ for some (and hence all) $r >0$ such that $\D_{r}^2\subseteq D$. A consideration of the power series expansion of $f_n$ at $(0,0)$ in $\D_r^2$ implies in a straightforward way that there is a holomorphic function $y_n : \D_{r^2} \to \C$ such that
\begin{equation} \label{eq:3.1New}
 f_n(z,w)=y_n(zw) z^n \quad \text{ if } n \ge 0 \quad \text{ and } \quad f_n(z,w)=y_n(zw) w^{|n|} \quad \text{ if } n \le 0 \,
 \end{equation}
 for all $(z,w) \in \D_r^2$.
It is, then, easy to see that  $\Delta_{zw} f_n=\lambda f_n$ on $U$ if and only if
 \begin{equation} \label{eq:3.2}
 4 \left(1-t\right)^2 \left[ t y_n''(t)+(|n|+1) y_n'(t) \right]=\lambda y_n(t)
 \end{equation}
 for all $t\in \D_{r^2}$. A power series ansatz shows that there is at most one solution $y_n$ of (\refeq{eq:3.2}) which is holomorphic in a neighborhood of $t=0$ and normalized such that $y_n(0)=1$. In order to find this solution, we convert
\eqref{eq:3.2} into a hypergeometric differential equation as follows. We choose $\mu \in \C$ such that $\lambda=4 \mu(\mu-1)$ and let $\hat{y}_n(t):=(1-t)^{-\mu} y_n(t)$. Then (\refeq{eq:3.2}) is equivalent to
\begin{equation} \label{eq:3.2New}
t (1-t) \hat{y}_n''+\big[c-(a+b+1) t\big] \hat{y}_n'-a b  \hat{y}_n=0
\end{equation}
with
$$ a=\mu, \, b=\mu+|n|, \, c=|n|+1 \, .$$
It is well--known (see \cite[\S 15.10]{dlmf}) that the only solution $\hat{y}_n$ of (\refeq{eq:3.2New}) which is holomorphic at $t=0$ and normalized at $t=0$ by  $\hat{y}_n(0)=1$ is the  hypergeometric series
\begin{equation*}\label{eq:3.3}
		\Fz{a,b}{c}{t}:=\sum_{k=0}^\infty\frac{(a)_k(b)_k}{(c)_k}\frac{t^k}{k!}\,,\quad |t|<1,
	\end{equation*}
	where
	\begin{equation}\label{eq:Pochhammer}
		(\alpha)_k:=\prod\limits_{j=0}^{k-1}(\alpha+j), \, \qquad \alpha\in\C, \, k\in\N_0
	\end{equation}
	denotes the \emph{(rising) Pochhammer symbol}. This procedure leads to the
	conclusion that \begin{equation}\label{eq:eigenvaluehypfunction}
		y_{n}(t)=(1-t)^\mu\Fz{\mu,\mu+|n|}{|n|+1}{t}
	\end{equation}
   is the unique solution of (\refeq{eq:3.2}) which
	is holomorphic in $t=0$ and  normalized by $y_n(0)=1$. Note that
 there are in fact two complex numbers $\mu \in \C$ such that $\lambda=4 \mu(\mu-1)$. However, as we have seen, both necessarily lead to the same holomorphic solution $y_n$ of (\refeq{eq:3.2}) with $y_n(0)=1$.

   \begin{remark}\label{rem:Poissonplusminus}
       Note that if $\mu$ is one solution to $\lambda=4\mu(\mu-1)$, then $1-\mu$ is the other one. As we have seen, both numbers induce the same function \eqref{eq:eigenvaluehypfunction}. This also follows from the transformation formula (see \cite[Eq. 15.3.3]{abramowitz1984})
       \begin{align*}
			\Fz{a,b}{c}{t}
			&=
			(1-t)^{c-a-b}
			\Fz{c-a,c-b}{c}{t} \, .
		\end{align*}
    In the following it will often be convenient to choose $\mu$ such that $\Re\mu\geq1/2$.
   \end{remark}

  By a standard fact about hypergeometric functions (\cite[15.3.1]{abramowitz1984}) the function $y_n$ has a  holomorphic extension at least to the slit plane
  $$ \C \setminus [1,\infty)\, . $$ Returning to \eqref{eq:3.1New} with this choice of $y_n$, we therefore see that
  \begin{equation} \label{eq:F_Def}   F^{\mu}_n(z,w):=  (1-zw)^\mu \Fz{\mu,\mu+|n|}{|n|+1}{zw} \cdot \begin{cases} z^n & \text{ if } n \ge 0 \, , \\ w^{|n|} & \text{ if } n \le 0 \, ,\end{cases}
  \end{equation}
    is holomorphic at least on the domain
    $$ \Omega_{*}:=\left\{(z,w) \in \C^2 \, : \, zw \in \C \setminus [1,\infty)\right\} $$
    and provides the unique $n$--homogeneous solution of $\Delta_{zw} f=\lambda f$ on $\Omega_*$ up to a multiplicative constant.

    \medskip

    Next, we relate the $n$--homogeneous eigenfunction $F^{\mu}_n$ of $\Delta_{zw}$ to a complexified version of the Poisson kernel of the unit disk. This slight change of perspective will turn out to be important in the sequel. In fact, a well--known integral representation formula for the hypergeometric function $\Fz{\mu,\mu+|n|}{|n|+1}{zw}$, see \cite[Section 2.5.1, Formula (10), p.~81]{Erdelyi1953}, shows that for every $n \in \Z$,
    \begin{equation} \label{eq:FNvsPM}
    \binom{\mu+|n|-1}{|n|} F^\mu_n(z,w)=\frac{(1-z w)^{\mu}}{2\pi} \int \limits_{0}^{2\pi} \frac{e^{in t}}{\left( 1-z e^{-it} \right)^{\mu} \left(1-w e^{it} \right)^{\mu}} \, dt \, ,
    \end{equation}
    with the \emph{generalized binomial coefficients}
       $ \binom{\mu}{n}
        \coloneqq
        \frac{(\mu - n +1)_n}{n!}
        $ for $\mu \in \C$, $ n \in \N_0$.
    On the right--hand side of this identity one can recognize the $(-n)$--th Fourier coefficient of (a suitably defined power of order $\mu$ of) the \emph{generalized Poisson kernel}
	\begin{equation}
		\label{eq:generalizedPoisson}
		P:\D^2 \times \partial \D \rightarrow\C\,,\quad P(z,w;\xi)
		:=
		\frac{1-zw}
		{(1-z/\xi)(1-w\xi)}\,.
	\end{equation}

 Note that if $w=\cc{z}\in \D$, then $P(z,\overline{z};\xi)$ is the standard Poisson kernel of the unit disk. These considerations motivate the following definition.
	\begin{definition}[Poisson Fourier mode, PFM]
		Let $\mu\in\C$ and $n\in\Z$. Then the function
		\begin{equation}
			\label{eq:PoissonFourier}
			P_n^{\mu}: \D^2
   \rightarrow\C\,,\quad P_n^\mu(z,w)
			:=
			\frac{\left(1-zw\right)^{\mu}}{2\pi }\int\limits_{0}^{2\pi} \frac{e^{-int}}{\left(1-z e^{-it}\right)^\mu \left(1-w e^{it} \right)^\mu} \, dt\, .
		\end{equation}
 is called the \emph{$n$--th Poisson Fourier mode (PFM) of order $\mu$}.\\ Here, $a^\mu$ is defined for $a \in \C \setminus (-\infty,0]$ as $\exp\left( \mu \log a\right)$, where $\log$ denotes the principal branch of the logarithm.
 \end{definition}
\begin{remark}\label{rem:whenPFMindeedisPoissonkernelpowerFouriermode}
    If $w=\overline{z} \in \D$, then $P^\mu_n(z,\overline{z})$ is the $n$--th Fourier coefficient of the $\mu$--power of the (real--valued and, in fact, non--negative) Poisson kernel of the unit disk $\D$. Further, if $\mu=m\in\Z$, then $P^m_n(z,w)$ also is the $n$--th Fourier coefficient of the $m$--power of the generalized Poisson kernel from \eqref{eq:generalizedPoisson}.
\end{remark}

We can now reformulate (\ref{eq:FNvsPM}) in terms of PFMs as follows:
     \begin{equation} \label{eq:PMvsF}
                          P^\mu_n=(-1)^n \binom{-\mu}{|n|}  F^\mu_{-n} =\binom{\mu+|n|-1}{|n|} F^\mu_{-n}\, .
                        \end{equation}
    In particular, $P^\mu_{n}$ has a holomorphic extension from $\D^2$ to $\Omega_*$, which we continue to denote by~$P^{\mu}_n$.

    In analogy with \eqref{eq:3.1} we call a subdomain $D \subseteq \Omega$ \emph{rotationally invariant} if $(\eta z, w/\eta) \in D$ for all $(z,w) \in D$ and all $\eta \in \del \D$.
    Summarizing our considerations leads to the following complete description of the $n$--homogeneous eigenfunctions of $\Delta_{zw}$:
    \begin{theorem} \label{thm:helgason}
    Let $D$ be a rotationally invariant subdomain of $\Omega$ containing the origin and  $n\in\Z$. Suppose that  $f \in \mathcal{H}(D)$ is an $n$--homogeneous solution to $\Delta_{zw}f=\lambda f$ for some $\lambda \in \C$ of the form $\lambda=4\mu(\mu-1)$ with $\Re \mu \ge 1/2$.  Then there is a constant $c \in \C$ such that
				\begin{equation*}\label{eq:Helgasonlemma1}
					f(z,w)
					=
					c \, P_{-n}^{\mu}(z,w),
					\qquad (z,w) \in D \cap \Omega_*\, .
				\end{equation*}
				In particular, $f$ has a holomorphic extension to $D \cup \Omega_*$.
    Moreover, the following dichotomy holds:
                                				\begin{itemize}
					\item[(NE)] (Non--exceptional case)\\
     If $\lambda\not=4m (m+1)$ for all $m \in \N_0$, then $\Omega_*$ is the maximal domain of existence of $f$.
					\item[(E)] (Exceptional case)\\ If $\lambda=4m(m+1)$ for some $m \in \N_0$, then
					the maximal domain of existence of $f$ is
$$\begin{array}{lcl}
         \Omega_+ & &    n < -m\,;   \\[2mm]
                \Omega_-  & \text{ if }\quad &  n >m \,;  \\[2mm]
         \Omega & &   \! |n|\le m\,.   \end{array}$$

				\end{itemize}
                          \end{theorem}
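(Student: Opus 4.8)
The plan is to collect what has already been established and then address only the two new assertions: the formula for $f$ in terms of a single Poisson Fourier mode, and the dichotomy describing the maximal domain of existence. The first part is essentially a bookkeeping argument. Since $D$ is rotationally invariant and contains the origin, it contains a bidisk $\D_r^2$ for some $r>0$, and on $\D_r^2$ the $n$--homogeneity hypothesis \eqref{eq:3.1} forces the structure \eqref{eq:3.1New} with a holomorphic $y_n$ on $\D_{r^2}$; the eigenvalue equation $\Delta_{zw}f=\lambda f$ then becomes the ODE \eqref{eq:3.2} for $y_n$. The power series ansatz discussed before the theorem shows this ODE has a unique solution holomorphic near $t=0$ with $y_n(0)=1$, namely \eqref{eq:eigenvaluehypfunction}, so $f$ must agree with a constant multiple of $F^\mu_n$ on $\D_r^2$, hence (by the identity principle, or rather the connectedness of $D\cap\Omega_*$ together with analytic continuation) on all of $D\cap\Omega_*$. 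Using \eqref{eq:PMvsF} to rewrite $F^\mu_n$ as a multiple of $P^\mu_{-n}$ gives the displayed formula. That $f$ extends holomorphically to $D\cup\Omega_*$ follows because $P^\mu_{-n}$ is already known (from \eqref{eq:PMvsF} and the remarks preceding it) to be holomorphic on $\Omega_*$, so one may glue the given $f$ on $D$ with the constant multiple of $P^\mu_{-n}$ on $\Omega_*$; they agree on the (nonempty, connected) overlap $D\cap\Omega_*$.

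For the dichotomy the essential task is to determine, for each $n\in\Z$ and each admissible $\mu$, exactly how far $P^\mu_{-n}$ extends beyond $\Omega_*$. The integral representation \eqref{eq:PoissonFourier} is the right tool. First I would record that $P^\mu_{-n}(z,w)$ depends on $(z,w)$ only through $z^{\max(-n,0)}\,w^{\max(n,0)}$ times a function of $zw$ — equivalently, through \eqref{eq:F_Def} — so a singularity can only be created where the hypergeometric factor $(1-zw)^\mu\,\Fz{\mu,\mu+|n|}{|n|+1}{zw}$ is singular, i.e. a priori only over $zw\in[1,\infty)$ (this is why $\Omega_*$ is the generic maximal domain), \emph{and} possibly at the points of $\Omega$ over $zw=\infty$, that is near $(\infty,\cdot)$ or $(\cdot,\infty)$ in the flip chart. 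The non--exceptional case (NE) is then the statement that when $\lambda\neq 4m(m+1)$ the one--variable function $t\mapsto (1-t)^\mu\,\Fz{\mu,\mu+|n|}{|n|+1}{t}$ genuinely fails to continue across $[1,\infty)$; this is the classical branch--point behaviour of $\ _2F_1$ at $t=1$ — with $c-a-b=|n|+1-\mu-(\mu+|n|)=1-2\mu$ not a nonpositive integer (which is exactly the condition $\lambda\neq 4m(m+1)$, using $\Re\mu\ge 1/2$), the connection formula at $t=1$ exhibits a logarithmic or non--integer power singularity — so $\Omega_*$ cannot be enlarged.

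The exceptional case (E) is where the real work lies, and it is the step I expect to be the main obstacle. When $\lambda=4m(m+1)$ one may take $\mu=-m$ (a nonpositive integer, since $-m$ and $1-\mu=m+1$ are the two roots and $\Re(m+1)\ge 1/2$ for $m\ge 0$... in fact one should take $\mu=m+1$ to have $\Re\mu\ge 1/2$, and then re-examine; either root gives the same function by Remark \ref{rem:Poissonplusminus}). The point is that for these parameters the hypergeometric factor degenerates: $(1-t)^\mu\,\Fz{\mu,\mu+|n|}{|n|+1}{t}$ becomes (a polynomial)$\,\cdot\,(1-t)^{\text{something}}$, and in fact \eqref{eq:F_Def} shows $F^\mu_n(z,w)$ becomes, up to constants, a polynomial in $z,w$ divided by a power of $(1-zw)$ — so it extends holomorphically across $\{zw=1\}$ is \emph{false}, but it does extend across $[1,\infty)\setminus\{1\}$, i.e. to a neighbourhood of every finite point of $\Omega$. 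What remains is the behaviour over $zw=\infty$: writing $F^\mu_n$ (equivalently $P^\mu_{-n}$) in the flip chart $(z,w)\mapsto(1/w,1/z)$ and examining the explicit rational expression, one checks that it is holomorphic near $(\infty,w)$ iff the total degree of the numerator monomial $z^{\max(-n,0)}w^{\max(n,0)}$ (times the polynomial part of the hypergeometric factor, which has degree $m$ in $t=zw$) does not exceed the degree of the denominator power of $(1-zw)$ — and bookkeeping the exponents yields precisely: the singularity at $(\infty,\cdot)$ disappears iff $n\le m$, the singularity at $(\cdot,\infty)$ disappears iff $n\ge -m$. Combining, the maximal domain is $\Omega_-$ when $n>m$ (only the $(\infty,\cdot)$ locus is bad — wait, one must be careful which of $\Omega_\pm$ removes which locus), $\Omega_+$ when $n<-m$, and all of $\Omega$ when $|n|\le m$, matching the three definitions of $\Omega_\pm$, $\Omega$. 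I would carry out this exponent count carefully, using the explicit terminating series for $\Fz{\mu,\mu+|n|}{|n|+1}{t}$ at the exceptional parameters and the identity \eqref{eq:PMvsF}, and double-check the two boundary cases $n=\pm m$ and $n=0$ (where $\lambda=0$, $m=0$, and $F$ is constant).
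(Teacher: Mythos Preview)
Your first paragraph, reducing $f$ to a constant multiple of $P^\mu_{-n}$ via the ODE \eqref{eq:3.2} and the uniqueness of its holomorphic solution at $t=0$, is correct and matches the paper's derivation preceding the theorem.

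The (NE) argument has a genuine gap. Your criterion ``$c-a-b=1-2\mu$ not a nonpositive integer'' does \emph{not} characterize the non--exceptional case: for $\mu=3/2$ (so $\lambda=3$, non--exceptional) one has $1-2\mu=-2$, a nonpositive integer, and your argument as stated would not apply. More importantly, invoking a singularity of $(1-t)^\mu\Fz{\mu,\mu+|n|}{|n|+1}{t}$ at $t=1$ does not by itself rule out extension to a point $(z_0,w_0)$ with $z_0w_0>1$; you need to show the function fails to continue across the cut $(1,\infty)$, not merely that $t=1$ is bad. The paper handles this cleanly: it first applies the Pfaff transformation \eqref{eq:2F1MT} to rewrite $F^\mu_n$ in terms of $\Fz{\mu,1-\mu}{|n|+1}{zw/(zw-1)}$, and then uses the explicit jump formula \cite[15.2.3]{dlmf} across $(1,\infty)$, whose nonvanishing is governed by the factor $1/(\Gamma(\mu)\Gamma(1-\mu))=\sin(\pi\mu)/\pi$. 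This vanishes precisely when $\mu\in\Z$, i.e.\ (under $\Re\mu\ge 1/2$) when $\mu\in\N$, which is exactly the exceptional condition.

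In (E) your exponent--counting sketch is on the right track for showing \emph{extension} to $\Omega_\pm$ or $\Omega$, and the paper does essentially this, using the Euler transformation to exhibit $(1-zw)^{2\mu-1}\Fz{\mu,\mu+|n|}{|n|+1}{zw}$ as the polynomial $\Fz{1-\mu,|n|+1-\mu}{|n|+1}{zw}$ and then reading off degrees. But you have not addressed \emph{non--extension}: for $n>m$ (say) you must show $F^\mu_n$ genuinely blows up as $z\to\infty$ with $w$ fixed, so that $\Omega_-$ cannot be enlarged. The paper does this by computing the finite nonzero limit of the rational part via Gauss's value $\Fz{\mu,1-\mu}{|n|+1}{1}=\Gamma(|n|+1)\Gamma(|n|)/(\Gamma(|n|+1-\mu)\Gamma(|n|+\mu))$, so that the remaining factor $z^n$ forces $|F^\mu_n(z,w_0)|\to\infty$. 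Your sketch (``bookkeeping the exponents'') does not supply this step, and without it the maximality claims in (E) are unproved.
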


  \begin{proof} It remains to prove that for any nonconstant $n$--homogeneous eigenfunction $F^{\mu}_n$ of $\Delta_{zw}$ in $\mathcal{H}(\Omega_*)$ the dichotomy ``(NE) vs.~(E)'' holds.

  \medskip

  (i)
      Let $\mu \not \in \N$ and assume that $\Omega_*$ is not the maximal domain of existence of $F^\mu_n$ which is contained in $\Omega$, see Definition \ref{def:MaximalDomain}. Then $F^\mu_n$ has a holomorphic extension to some point $(z_0,w_0) \in \Omega$ such that $z_0w_0 \in \R \cup \{\infty\}$ and $z_0w_0 > 1$. We first consider the case $n \ge 0$. By definition of $F^\mu_n$, see  (\ref{eq:F_Def}), and in view of \cite[15.3.4]{abramowitz1984}, we have
\begin{equation} \label{eq:2F1MT}
  F^{\mu}_n(z,w)=(1-zw)^{\mu}\Fz{\mu,\mu+n}{|n|+1}{zw}z^n =\Fz{\mu,1-\mu}{|n|+1}{\frac{zw}{zw-1}} z^n\, ,
  \end{equation}
  from which we infer that
  $$t \mapsto \Fz{\mu,1-\mu}{|n|+1}{t}$$
  is holomorphic in a neighborhood of the point $x_0:=z_0w_0/(z_0w_0-1) \ge 1 $. However, see \cite[15.2.3]{dlmf},
  \begin{equation} \label{eq:2F1jump} \begin{split} \lim \limits_{y \to 0^+} & \big[ \Fz{\mu,1-\mu}{|n|+1}{x+i y} -\Fz{\mu,1-\mu}{|n|+1}{x-i y}\big]\\ & \quad =\frac{2\pi i}{\Gamma(\mu) \Gamma(1-\mu)} \left(x-1 \right)^{|n|} \Fz{|n|+1-\mu,|n|+\mu}{|n|+1}{1-x} \end{split}
  \end{equation}
  for all $x >1$. By our assumption, the left--hand side of (\ref{eq:2F1jump}) has to vanish for all $x \in \R$ in some open interval $(x_0,x_0+\eps)$ with $\eps>0$ and hence the same is true for the right--hand side. Since the right--hand side is a holomorphic function of $x$ on the domain $\C \setminus (-\infty,0]$ this is clearly only possible if $\mu \in \N$. The remaining case $n \le 0$ follows from the case $n \ge 0$ and $F^\mu_{-n}(z,w)=F^{\mu}_n(w,z)$.

  \medskip

  (ii)  Let $\mu \in \N$ and $|n| \le \mu-1$.  If $n \ge 0$, then
  $$ F^\mu_n(z,w)=\frac{G^\mu_n(zw)}{(1-zw)^{\mu-1}} z^{n} \, ,$$
  where
  \begin{equation} \label{eq:2F1symm}
  \begin{aligned}
   G^{\mu}_n(zw)&:= (1-zw)^{2 \mu-1}   \Fz{\mu,\mu+|n|}{|n|+1}{zw}\\ &\hspace{0.11cm}=\Fz{1-\mu,|n|+1-\mu}{|n|+1}{zw}
   \end{aligned}
\end{equation}
is a  polynomial in $zw$
of degree $\mu-n-1 \ge 0$, see \cite[15.3.3 and 15.1.1]{abramowitz1984}. Hence $F^\mu_n$ is the product of $z^n$ and  a rational function in $zw$ of numerator degree $\mu-n-1$ and of denominator degree $\mu-1$ with pole only at  the point $1$. Therefore, $F^\mu_n \in \mathcal{H}(\Omega)$. Since  $F^{\mu}_n(z,w)=F^{\mu}_{-n}(w,z)$, this implies $F^{\mu}_n \in \mathcal{H}(\Omega)$ also for $1-\mu \le n \le 0$.

\medskip

(iii) Let $\mu \in \N$ and $|n| \ge \mu$. Since  $F^{\mu}_n(z,w)=F^{\mu}_{-n}(w,z)$ and $\mathcal{H}(\Omega_+) \cap \mathcal{H}(\Omega_-)=\mathcal{H}(\Omega)$ we may assume $n \ge \mu$. In this case, the function $G^\mu_n$ in (\ref{eq:2F1symm}) is a polynomial in $zw$ of degree $\mu-1$, and thus  $F^\mu_n$  is the product of $z^{n}$ times a rational function in $zw$ of numerator degree $ \mu-1$ and of denominator degree $\mu-1$, and is therefore holomorphic on $\Omega_-$. Moreover, $F^\mu_n$ has no holomorphic extension to a point $(\infty,w_0)$ with $w_0 \in \widehat{\C}\setminus\{0\}$ since in view of (\ref{eq:2F1symm}) and (\ref{eq:2F1MT})
\begin{align*} \lim \limits_{z \to \infty} \frac{G^\mu_n(zw_0)}{(1-zw_0)^{\mu-1}} & =\lim \limits_{z \to \infty} \Fz{\mu,1-\mu}{|n|+1}{\frac{zw_0}{z w_0-1}}=\Fz{\mu,1-\mu}{|n|+1}{1}\\
  &= \frac{\Gamma(n+1) \Gamma(n)}{\Gamma(n+1-\mu) \Gamma(n+\mu)} \in (0,\infty)\, ,
\end{align*}
by \cite[15.1.20]{abramowitz1984},
so $|F^\mu_n(z,w_0)| \to \infty$ as $z \to \infty$. This implies that $\Omega_-$ is the maximal domain of existence of $F^\mu_n$.
\end{proof}

 \begin{corollary}\label{cor:F}
     For every $\lambda\in \C$ and each $n \in \Z$ there is an $n$--homogeneous holomorphic solution of $\Delta_{zw}f =\lambda f$ on the domain $\Omega_*$. More precisely, if $\lambda=4\mu(\mu-1) \in \C$ with $\Re \mu \ge 1/2$, then every such solution has the form $c P^{\mu}_{-n}$ for some $c \in \C$.
 \end{corollary}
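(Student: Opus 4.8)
The plan is to deduce the corollary directly from Theorem~\ref{thm:helgason}, using only the explicit identity \eqref{eq:PMvsF}, which exhibits $P^\mu_{-n}$ as a scalar multiple of the function $F^\mu_n$ from \eqref{eq:F_Def}. The existence statement will come from pointing to $F^\mu_n$ as an explicit solution, and the uniqueness (``more precisely'') part is nothing but Theorem~\ref{thm:helgason} read with $D=\Omega_*$.

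For existence, given $\lambda\in\C$ I would first choose $\mu$ with $\lambda=4\mu(\mu-1)$ and $\Re\mu\ge 1/2$; this is always possible because the two solutions of $4\zeta(\zeta-1)=\lambda$ are $\mu$ and $1-\mu$, at least one of which has real part $\ge 1/2$. The function $F^\mu_n$ of \eqref{eq:F_Def} is then holomorphic on $\Omega_*$, it is $n$--homogeneous in the sense of \eqref{eq:3.1} --- immediate from the factorisation \eqref{eq:3.1New} since the product $zw$ is fixed by $(z,w)\mapsto(\eta z,w/\eta)$ for $\eta\in\partial\D$ --- and it solves $\Delta_{zw}F^\mu_n=\lambda F^\mu_n$ on $\Omega_*$ because $y_n$ from \eqref{eq:eigenvaluehypfunction} satisfies the ODE \eqref{eq:3.2}; all of this was already recorded in the text preceding Theorem~\ref{thm:helgason}. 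Finally, \eqref{eq:PMvsF} gives $P^\mu_{-n}=\binom{\mu+|n|-1}{|n|}F^\mu_n$, and the constant $\binom{\mu+|n|-1}{|n|}=(\mu)_{|n|}/|n|!$ is nonzero precisely because $\Re\mu\ge 1/2$ rules out $\mu\in\{0,-1,\dots,-(|n|-1)\}$; hence $P^\mu_{-n}$ is itself an $n$--homogeneous holomorphic solution of $\Delta_{zw}f=\lambda f$ on $\Omega_*$, which settles the first sentence.

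For the uniqueness part, let $f\in\mathcal H(\Omega_*)$ be an arbitrary $n$--homogeneous solution of $\Delta_{zw}f=\lambda f$. I would just verify that $\Omega_*$ meets the hypotheses of Theorem~\ref{thm:helgason}: it is a subdomain of $\Omega$, it contains $(0,0)$ (as $0\in\C\setminus[1,\infty)$), and it is rotationally invariant (again because $(\eta z)(w/\eta)=zw$). Applying Theorem~\ref{thm:helgason} with this $D=\Omega_*$ and the chosen $\mu$ with $\Re\mu\ge 1/2$ then produces $c\in\C$ with $f(z,w)=c\,P^\mu_{-n}(z,w)$ on $D\cap\Omega_*=\Omega_*$, which is exactly the assertion.

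I do not expect a genuine obstacle, since the corollary is essentially a specialisation of Theorem~\ref{thm:helgason}; the only point needing a moment's attention is the non--vanishing of the normalising constant $\binom{\mu+|n|-1}{|n|}$, which is the reason for insisting on the root with $\Re\mu\ge 1/2$. Should one prefer an argument that does not cite Theorem~\ref{thm:helgason}, one can instead restrict $f$ to a small bidisk $\D_r^2\subseteq\Omega_*$ about the origin, use \eqref{eq:3.1New} to write $f(z,w)=y(zw)\,z^n$ (resp.\ $y(zw)\,w^{|n|}$) with $y$ holomorphic near $0$, observe that the space of solutions of \eqref{eq:3.2} holomorphic at $t=0$ is one--dimensional (the indicial exponents at the regular singular point $t=0$ are $0$ and $-|n|$, and a holomorphic solution vanishing at $0$ is forced to vanish identically by the recursion), conclude $f=c\,F^\mu_n$ on $\D_r^2$, and extend to all of $\Omega_*$ by the identity principle on the connected set $\Omega_*$; invoking Theorem~\ref{thm:helgason} is merely shorter.
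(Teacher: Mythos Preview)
Your proposal is correct and matches the paper's approach: the corollary is stated there without a separate proof, as it follows immediately from the construction of $F^\mu_n$ in \eqref{eq:F_Def} and from Theorem~\ref{thm:helgason} specialised to $D=\Omega_*$, exactly as you argue. Your observation that $\Re\mu\ge 1/2$ ensures the normalising constant $\binom{\mu+|n|-1}{|n|}$ is nonzero is the only detail worth making explicit, and you handle it correctly.
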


We close this section by collecting some elementary properties of the PFM $P^\mu_n$ which will be needed in the sequel.

\begin{remark}[Elementary properties of Poisson Fourier modes]\label{rem:PFMproperties}
Let $\mu \in \C$, $n \in \N_0$ and $z,w \in \D$.
\begin{enumerate}[(a)]

\item The Poisson Fourier modes are related to the hypergeometric function $\F$ via
			\begin{align}
   \begin{split}\label{eq:PoissonFourierHypergeometric1}
				P_{n}^{\mu}(z,w)
				&=
				(-1)^n\binom{-\mu}{n}
				(1-zw)^\mu w^n
				\Fz{\mu,\mu+n}{n+1}{zw}\\
				&=
				(-1)^n\binom{-\mu}{n}
				 w^n
				\Fz{\mu,1-\mu}{n+1}{\frac{zw}{zw-1}}\,.
                \end{split}
			\end{align}
   This is (\ref{eq:F_Def}) together with (\ref{eq:PMvsF}) resp.~(\ref{eq:2F1MT}).

\item Using the series representation of $\F$ functions we see that
\begin{equation}\label{eq:generalPoissonfouriermode}
				P_{n}^{\mu}(z,w)
				=
				(-1)^n(1 - zw)^\mu
				\sum_{k=0}^{\infty}
				\binom{-\mu}{k + n}
				\binom{-\mu}{k}
				z^{k} w^{k+n}
    \,.
			\end{equation}
If $-\mu=m \in \N_0$, then $P_{\pm n}^{-m} = 0$, whenever $n > m$. Otherwise,
			\begin{equation}
				\label{eq:PoissonFourierExplicit}
				P_{n}^{-m}(z,w)
				=
				(-1)^n
				\sum_{k=0}^{m-n}
				\binom{m}{k + n}
				\binom{m}{k}
				\frac{z^{k} w^{k+n}}{(1 - z w)^m}
    \,.
			\end{equation}

   \item The PFM are symmetric in the sense that $P_{n}^{\mu}(z,w)=P_{-n}^{\mu}(w,z)$. This allows us to simplify our proofs in the following: we will often prove identities for $P^{\mu}_{n}$ only which then implies the corresponding result for $P^{\mu}_{-n}$.

   \item\label{item:Poissonplusminus} Remark \ref{rem:Poissonplusminus} implies
   \begin{equation}\label{eq:Poissonplusminus}
       \binom{\mu-1}{n}P_{\pm n}^{\mu}=\binom{-\mu}{n}P_{\pm n}^{1-\mu}\,.
   \end{equation}
		\end{enumerate}

\end{remark}
\begin{remark}[Invariant representative functions and the finite dimensional invariant eigenspaces]
		We consider $\D$ as a symmetric space $\mathcal{M}(\D) / \partial \D$ over its automorphism group $\mathcal{M}(\D)$ with $\partial \D \cong \operatorname{U}(1)$ acting by rotations. This yields an alternative way of deriving the restrictions to $\D$ of those Poisson Fourier modes from \eqref{eq:PoissonFourierExplicit}, which are defined on all of $\Omega$, using finite dimensional representation theory. Such considerations are very much in the spirit of \cite{Helgason70}. The group $\mathcal{M}(\D)$ is isomorphic to the projective split unitary group $$\operatorname{PSU}(1,1) = \Big\{
            \mbox{$\scriptsize
            \begin{pmatrix}
				a & b \\
				\cc{b} & \cc{a}
            \end{pmatrix}$}
            \colon
            \abs{a}^2-\abs{b}^2=1
            \Big\}
            \Big/
            \Big\{
            \pm
            \mbox{$\scriptsize
            \begin{pmatrix}
				1 & 0 \\
				0 & 1
			\end{pmatrix}$}
            \Big\}$$ as a real Lie group.
            By Schur's Lemma every irreducible representation $\pi \colon \mathcal{M}(\D) \longrightarrow \operatorname{GL}_n(\C)$ as invertible $(n\times n)$--matrices induces eigenfunctions of the Laplacian on $\operatorname{PSU}(1,1)$ via
			\begin{equation}
				\label{eq:RepFunction}
				\pi_{k, \ell}
				\colon
				\operatorname{PSU}(1,1) \longrightarrow \C,
				\quad
				\pi_{k, \ell}(g)
				\coloneqq
				\pi(g)_{k, \ell}
				\, ,
				\qquad 1 \le k, \ell \le n,
			\end{equation}
			which are known as \emph{representative functions} or \emph{matrix elements}. For a systematic discussion we refer to the textbook \cite[Chapter~III]{BroeckerDieck2003}. The corresponding eigenvalues may be computed from representation theoretic data, see e.g. \cite[Prop.~10.6]{Hall2010}. Note that in the literature, many results are formulated for general invariant differential operators or the corresponding Casimir elements and joint eigenfunctions thereof instead of the Laplacian, which is always invariant and thus constitutes a special case. However, the disk $\D$ is a two--point homogeneous space, so all $\mathcal{M}(\D)$--invariant differential operators are polynomials in the Laplacian, see \cite[Theorem~11]{Helgason1959}. Adapting \cite[Example~4.10]{Hall2010}, one may parametrize the irreducible representations of $\operatorname{PSU}(1,1)$ as follows: let $m \in \N_0$ and
			\begin{equation*}
				V_m
				\coloneqq
				\operatorname{span}
				\big\{
				z^k w^{m-k}
				\in
				\C[z,w]
				\;\big|\;
				0 \le k \le m
				\big\}
			\end{equation*}
			the vector space of polynomials of total degree $m$ with $\dim(V_m) = m+1$. Then
			\begin{equation*}
				\label{eq:SUIrreps}
				\pi_m
				\colon
				\operatorname{SU}(1,1)
				\longrightarrow
				\operatorname{GL}(V_m), \quad
				\pi_m
                \mbox{$\scriptsize
				\begin{pmatrix}
				a & b \\
				\cc{b} & \cc{a}
                \end{pmatrix}$}
				p
				(z,w)
				\coloneqq
				p
				\big(
                    \cc{a}z - bw,
                    -\cc{b}z + aw
                \big)
			\end{equation*}
			for $m \in \N$ constitutes a complete list of the irreducible finite dimensional representations of $\operatorname{SU}(1,1)$, see also \cite[Prop.~4.11 \& Sec.~4.6]{Hall2010}. Moreover, $\pi_m$ descends to the quotient $\operatorname{PSU}(1,1)$ if and only if $m$ is even, providing a description of all irreducible finite dimensional representations of $\operatorname{PSU}(1,1)$. A computation then yields explicit formulas for \eqref{eq:RepFunction}, which completes the eigenvalue theory of the Laplacian on $\operatorname{PSU}(1,1)$. Finally, parametrizing a copy of the rotation group $\operatorname{U}(1) \subseteq \operatorname{PSU}(1,1)$ by $\big(\begin{smallmatrix}
				i \eta & 0 \\
				0 & - i \cc{\eta}
			\end{smallmatrix}\big)$ with $\eta \in \partial \D$ yields that exactly the representative functions $(\pi_{2m})_{m,d}$ with $1 \le d \le 2m+1$ are invariant under the action of $\operatorname{U}(1)$ and thus pass to the quotient $\D \cong \operatorname{PSU}(1,1) / \operatorname{U}(1)$. By a computation, $(\pi_{2m})_{m,d} = P_{m-d}^{-m}$ with the exceptional Poisson Fourier modes from \eqref{eq:PoissonFourierExplicit}. Note that this only recovers the finite dimensional invariant eigenspace, i.e. the case $(E_0)$ in Theorem~\ref{thm:RudinIntro}. It would be interesting to study whether this approach generalizes to the other invariant subspaces by incorporating representations on infinite dimensional spaces.
		\end{remark}

		\section{Spectral decomposition of eigenspaces}
        \label{sec:Helgason}

			In this section we show that for every rotationally invariant domain $D\subseteq \Omega$ containing the origin each holomorphic eigenfunction of the invariant Laplacian $\Delta_{zw}$ on $D$ has a unique representation as a \textit{Poisson Fourier series}, a  doubly infinite series with Poisson Fourier modes as building blocks. We shall also see that if $D=\Omega$ this series corresponds to a finite sum, and when~$D$ is one of the distinguished domains $\Omega_+$ or $\Omega_-$, then the series is one--sided infinite.

			\begin{theorem}\label{thm:Helgason}
				Let $D$ be a rotationally invariant subdomain of $\Omega$ containing the origin, and let $f \in \mathcal{H}(D)$ be such that $\Delta_{zw} f=\lambda f$ for some $\lambda \in \C$ of the form $\lambda=4\mu(\mu-1)$ with $\Re \mu \ge 1/2$. Then there are uniquely determined coefficients $c_n \in \C$ such that
				\begin{equation} \label{eq:PMSeries}
					f
					=\sum \limits_{n=-\infty}^{\infty} c_n P_n^{\mu}:=
					\sum_{n = 0}^\infty
					c_n
					P_n^{\mu}
					+
					\sum_{n = 1}^\infty
					c_{-n}
					P_{-n}^{\mu}\,.
				\end{equation}
				Here, both series converge absolutely and locally uniformly in $D$.

			\end{theorem}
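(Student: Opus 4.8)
The plan is to decompose $f$ along the circle action $\eta\cdot(z,w):=(\eta z,w/\eta)$, $\eta\in\partial\D$, which preserves both $D$ and the operator $\Delta_{zw}$, and then to reduce the convergence question to the one--variable theory of Laurent series in $\eta$. Concretely, for each $n\in\Z$ I would introduce the Fourier component
$$g_n(z,w):=\frac1{2\pi}\int_0^{2\pi}f\bigl(e^{i\theta}z,e^{-i\theta}w\bigr)\,e^{in\theta}\,d\theta\qquad\bigl((z,w)\in D\bigr).$$
Since $D$ is rotationally invariant the integrand always lies in $D$, so differentiating under the integral sign shows $g_n\in\mathcal H(D)$; the substitution $\theta\mapsto\theta+\arg\eta$ shows $g_n(\eta z,w/\eta)=\eta^{-n}g_n(z,w)$, i.e.\ $g_n$ is $(-n)$--homogeneous; and since each rotation belongs to $\mathcal M(\D^2)$ (equivalently, since $zw$ and $\partial_z\partial_w$ are left unchanged by it) the operator $\Delta_{zw}$ commutes with it, so $\Delta_{zw}g_n=\lambda g_n$. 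Applying Theorem~\ref{thm:helgason} to the $(-n)$--homogeneous eigenfunction $g_n$ produces a constant $c_n\in\C$ (set $c_n:=0$ if $g_n\equiv0$) with $g_n=c_nP_n^{\mu}$ on $D\cap\Omega_*$; as the same theorem yields a holomorphic extension of $g_n$ to $D\cup\Omega_*$ and $\Omega_*$ is connected, this extension coincides with $c_nP_n^{\mu}$ on all of $\Omega_*$, so that $P_n^{\mu}$ in fact extends holomorphically to $D$ whenever $c_n\neq0$ and $g_n=c_nP_n^{\mu}$ holds throughout $D$.

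The heart of the argument is the convergence of $\sum_n g_n$. For fixed $(z,w)\in D$ the function $\zeta\mapsto f(\zeta z,w/\zeta)$ is holomorphic on an open annular neighbourhood of $\partial\D$ (again because $D$ is rotationally invariant), and $g_n(z,w)$ is precisely its $(-n)$--th Laurent coefficient, whence
$$g_n(z,w)=\frac1{2\pi i}\oint_{|\zeta|=\rho}f(\zeta z,w/\zeta)\,\zeta^{\,n-1}\,d\zeta$$
for every admissible radius $\rho$. Given a compact $K\subseteq D$, the openness and rotational invariance of $D$ together with compactness of $K$ provide a single $\rho_0>1$ such that $(\zeta z,w/\zeta)\in D$ for all $(z,w)\in K$ and all $\zeta$ with $\rho_0^{-1}\le|\zeta|\le\rho_0$; choosing $\rho=\rho_0^{-1}$ when $n\ge0$ and $\rho=\rho_0$ when $n\le0$ yields $\sup_K|g_n|\le M\rho_0^{-|n|}$, where $M:=\sup\{|f(\zeta z,w/\zeta)|:(z,w)\in K,\ \rho_0^{-1}\le|\zeta|\le\rho_0\}<\infty$. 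Since $\rho_0>1$, this shows that $\sum_{n\in\Z}g_n$ converges absolutely and uniformly on $K$, hence absolutely and locally uniformly on $D$, to some $G\in\mathcal H(D)$.

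It remains to identify $G$ with $f$ and to prove uniqueness, and both follow from the orthogonality relations $\frac1{2\pi}\int_0^{2\pi}e^{i(n-m)\theta}\,d\theta=\delta_{nm}$ combined with the homogeneity $g_m(e^{i\theta}z,e^{-i\theta}w)=e^{-im\theta}g_m(z,w)$. Integrating $G-f$ against $e^{in\theta}$ along the orbit of $(z,w)$ — term by term, which is legitimate by local uniform convergence — produces $g_n(z,w)-g_n(z,w)=0$ for every $n$; since a continuous $2\pi$--periodic function all of whose Fourier coefficients vanish is identically zero, evaluation at $\theta=0$ gives $G\equiv f$. Applying the same computation to an arbitrary representation $f=\sum_n c_n'P_n^{\mu}$ that converges locally uniformly on $D$ forces $c_n'P_n^{\mu}=g_n=c_nP_n^{\mu}$; because the hypothesis $\Re\mu\ge1/2$ excludes $-\mu\in\N_0$, the function $P_n^{\mu}$ has lowest--order coefficient a nonzero multiple of $\binom{-\mu}{|n|}$, so $P_n^{\mu}\not\equiv0$ and therefore $c_n'=c_n$. (The same bookkeeping, via Theorem~\ref{thm:helgason}, shows that only finitely many $P_n^{\mu}$ are holomorphic on $D=\Omega$ and only a one--sided family is holomorphic on $D=\Omega_\pm$, so the series collapses accordingly.)

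I expect the only genuinely delicate step to be the passage from the easy pointwise identity $f=\sum_n g_n$ — which holds simply because the $g_n$ are the Fourier coefficients of the smooth periodic function $\theta\mapsto f(e^{i\theta}z,e^{-i\theta}w)$ — to local uniform convergence on all of $D$; the mechanism that makes this work is the recognition of $g_n(z,w)$ as a one--variable Laurent coefficient of $\zeta\mapsto f(\zeta z,w/\zeta)$, which can be extracted from a contour integral over a circle of radius $\neq1$ and hence decays geometrically like $\rho_0^{-|n|}$. A secondary, purely bookkeeping point is to upgrade the identity $g_n=c_nP_n^{\mu}$ from a neighbourhood of the origin to all of $D$, which is where the connectedness of $\Omega_*$ (and the extension statement in Theorem~\ref{thm:helgason}) enters.
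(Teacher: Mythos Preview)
Your proof is correct and follows essentially the same route as the paper's: define the Fourier/Laurent components $g_n$ along the circle action, identify each with a multiple of $P^\mu_n$ via Theorem~\ref{thm:helgason}, and obtain absolute locally uniform convergence by recognizing $g_n(z,w)$ as a Laurent coefficient of $\zeta\mapsto f(\zeta z,w/\zeta)$ and deforming the radius to get geometric decay $\rho_0^{-|n|}$. Your treatment is in fact slightly more careful than the paper's in two places---you explicitly justify why $P^\mu_n\not\equiv0$ under $\Re\mu\ge\tfrac12$ (needed for uniqueness of the $c_n$), and you spell out the identity-principle step extending $g_n=c_nP^\mu_n$ from $D\cap\Omega_*$ to all of $D$---while the paper reaches $G=f$ more directly by simply evaluating the Laurent series of $\zeta\mapsto f(\zeta z,w/\zeta)$ at $\zeta=1$.
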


			\begin{proof}
				(i) For each $n \in \Z$ and all $(z,w) \in D$ we consider
				$$ f_n(z,w):=\frac{1}{2\pi i} \int \limits_{\partial \D} f \left( \eta z,\frac{w}{\eta} \right) \eta^{-(n+1)} \, d\eta \, .$$
				Since $D$ is rotationally invariant, $f_n$ is well--defined. Clearly, $f_n$ is holomorphic on $D$  and $n$--homogeneous. By Theorem \ref{thm:helgason}, there are complex numbers $c_n \in \C$ such that
				$$ f_n(z,w)=c_n P^{\mu}_{-n}(z,w) \, . $$

				(ii) We fix $(z,w) \in D$ and  choose positive constants $r<1<R$ such that $(\eta z,w/\eta) \in D$ for all $r<|\eta|<R$. This is possible as $D$ is a rotationally invariant domain. Then
				$\eta \mapsto f(\eta z,w/\eta)$ is holomorphic in the annulus $r<|\eta|<R$ and therefore has a representation as the Laurent series
				$$ f\left(\eta z,\frac{w}{\eta} \right)=\sum \limits_{n=-\infty}^{\infty} f_n(z,w) \eta^n \, ,$$
                which converges locally uniformly in $r<|\eta|<R$.			In particular,
				$$ f\left(z,w \right)=\sum \limits_{n=-\infty}^{\infty} f_n(z,w)  \, , \qquad (z,w) \in D \, .$$
				This  series converges in fact uniformly on each compact set $K\subseteq D$. In order to see this, let $K$ be such a compact subset of $D$. We can then choose  positive constants $r_1<1<R_1$ such that ${K_1:=\{(\eta z,w/\eta) \, : \, (z,w) \in K, \, r_1 \le |\eta| \le R_1\}}$ is a compact subset of $D$, and we let
				${M_1:=\max\{ |f(z,w)| \, : (z,w) \in K_1\}}$. Note that
				$$f_n(z,w)=\frac{1}{2\pi i} \int \limits_{\partial\D_{\rho}} f\left(\eta z,\frac{w}{\eta} \right) \, \eta^{-(n+1)} \, d \eta \, $$
				for all $n \in \Z$, all $(z,w) \in K$ and every $\rho \in [r,R]$. We therefore get
				$$ |f_n(z,w)| \le M_1 \cdot \rho^{-n} \, \quad \text{ for all } (z,w) \in K \text{ and all } \rho  \in [r_1,R_1] \, .$$
				In particular, $|f_n(z,w)| \le M_1 r_1^{-n}$ for all $n < 0$ and $|f_n(z,w)| \le M_1 R_1^{-n}$ for all $n \ge 0$, and this ensures the absolute and uniform convergence of the two series
				$$ \sum \limits_{n=-\infty}^{-1} f_n(z,w) \quad \text{ and } \quad \sum \limits_{n=0}^{\infty} f_n(z,w) $$
				on the compact set $K$.

				(iii) In view of Step (ii) the coefficients $f_n(z,w)$ are exactly the Laurent coefficients of ${\eta \mapsto f(\eta z,w/\eta)}$
				in an annulus containing the unit circle and are thus uniquely determined by $f$. Hence, $f_n(z,w)=c_n P_{-n}^{\mu}(z,w)$ shows that the coefficients $c_n$ are uniquely determined by~$f$.
			\end{proof}

			In fact, the previous proof provides the following more precise information.

			\begin{corollary} \label{cor:EW}
				Let $D$ be a subdomain of $\Omega$ containing the origin, and let $f \in \mathcal{H}(D)$ be such that $\Delta_{zw} f=\lambda f$ for some $\lambda\in\C$.
		\begin{itemize}
		\item[(i)] If $D=\Omega$, then  $\lambda=4m (m+1)$ for some $m\in\N_0$ and there are uniquely determined coefficients $c_n\in \C$ such that
					$$ f=\sum \limits_{n=-m}^m c_n P^{m+1}_n \, .$$
					\item[(ii)] If $D=\Omega_+$, then  $\lambda=4m (m+1)$ for some $m\in\N_0$ and there are uniquely determined coefficients $c_n \in \C$ such that
					$$ f=\sum \limits_{n=-\infty}^m c_n P^{m+1}_n \, .$$
					\item[(iii)] If $D=\Omega_-$, then  $\lambda=4m (m+1)$ for some $m\in\N_0$ and there are uniquely determined coefficients $c_n \in \C$ such that
					$$ f=\sum \limits_{n=-m}^{\infty} c_n P^{m+1}_n \, .$$
				\end{itemize}
			\end{corollary}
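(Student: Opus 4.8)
The plan is to derive Corollary~\ref{cor:EW} directly from the Poisson Fourier expansion of Theorem~\ref{thm:Helgason} together with the sharp description of the maximal domains of existence of the Poisson Fourier modes in Theorem~\ref{thm:helgason}. First I would fix $\mu$ with $\Re\mu\ge 1/2$ satisfying $\lambda = 4\mu(\mu-1)$, and observe that the domains $\Omega$, $\Omega_+$, $\Omega_-$ are each rotationally invariant and contain the origin, so Theorem~\ref{thm:Helgason} applies: an arbitrary $f\in X_\lambda(D)$ has a unique expansion $f = \sum_{n\in\Z} c_n P^\mu_n$ converging absolutely and locally uniformly on $D$. The key additional fact is that on a domain $D$ strictly larger than $\Omega_*$, the ``individual pieces'' $c_n P^\mu_n$ must themselves extend holomorphically to $D$ — this follows because the $n$-th piece is recovered from $f$ by the averaging integral $f_n(z,w) = \tfrac{1}{2\pi i}\int_{\partial\D} f(\eta z, w/\eta)\,\eta^{-(n+1)}\,d\eta = c_n P^\mu_{-n}(z,w)$, and the right-hand side of this is holomorphic wherever $f$ is (on the rotationally invariant domain $D$), since $f_n$ inherits holomorphy from $f$ exactly as in Step~(i) of the proof of Theorem~\ref{thm:Helgason}.

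Once that is in place, the three cases reduce to a bookkeeping argument using Theorem~\ref{thm:helgason}. For case (i), $D = \Omega$: if some $c_n \neq 0$, then $c_n P^\mu_n$ extends to $\Omega$, so by Theorem~\ref{thm:helgason} the nonconstant $n$-homogeneous eigenfunction $P^\mu_{-n}$ (note the index flip from the averaging formula, but this is irrelevant by the symmetry $P^\mu_n(z,w) = P^\mu_{-n}(w,z)$) has $\Omega$ among its admissible extension domains; Theorem~\ref{thm:helgason} shows this forces the exceptional case, i.e.\ $\lambda = 4m(m+1)$ for some $m\in\N_0$, and moreover forces $\mu = m+1$ (the normalization $\Re\mu\ge1/2$ singles out $\mu = m+1$ over the alternative root $-m$) and $|n|\le m$. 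Hence all nonzero terms have index in $\{-m,\dots,m\}$, giving $f = \sum_{n=-m}^m c_n P^{m+1}_n$. The eigenvalue identification also follows simply: since $X_\lambda(\Omega)$ is nontrivial (it contains $f$, and the constants lie in $X_0(\Omega)$; if $f$ is constant then $\lambda=0=4\cdot0\cdot1$), Theorem~\ref{thm:helgason} already forces $\lambda$ to be exceptional. Cases (ii) and (iii) are entirely analogous: for $D = \Omega_+$, a nonzero $c_n$ forces $P^\mu_{-n}$ (equivalently $P^\mu_n$) to extend to $\Omega_+$, which by Theorem~\ref{thm:helgason} happens precisely when $\lambda = 4m(m+1)$, $\mu = m+1$, and the index $n$ satisfies $n \le m$ (the admissible cases being $n \le m$, which combines the subcase ``maximal domain $\Omega_+$'', corresponding to $n<-m$, with the subcase ``maximal domain $\Omega$'', corresponding to $|n|\le m$; here I would double-check the index conventions carefully against the flip in the averaging formula). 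This yields $f = \sum_{n=-\infty}^m c_n P^{m+1}_n$, and symmetrically $f = \sum_{n=-m}^\infty c_n P^{m+1}_n$ for $\Omega_-$. Uniqueness of the coefficients is inherited verbatim from Theorem~\ref{thm:Helgason}.

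The main obstacle I anticipate is purely notational rather than conceptual: one must track carefully the index reversal introduced by the averaging integral ($f_n = c_n P^\mu_{-n}$) against the index-sign conventions in the maximal-domain dichotomy of Theorem~\ref{thm:helgason} (where $n<-m$ gives $\Omega_+$ and $n>m$ gives $\Omega_-$), and make sure the resulting constraints on the surviving summation ranges come out as stated — namely $-m\le n\le m$ for $\Omega$, $n\le m$ for $\Omega_+$, and $n\ge -m$ for $\Omega_-$. A second minor point requiring care is the step asserting that each term $c_n P^\mu_n$ extends to $D$ when $f$ does; this needs the averaging-integral argument (holomorphy of $f_n$ on $D$) rather than merely term-by-term inspection of the series, because a priori the series only converges on $D$ and one cannot immediately conclude that its pieces extend beyond where the sum does. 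Both points are routine once spelled out, so the proof is short modulo this careful bookkeeping.
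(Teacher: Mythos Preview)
Your approach is essentially identical to the paper's: both extract the $n$-homogeneous components via the averaging integral $f_n(z,w) = \frac{1}{2\pi i}\int_{\partial\D} f(\eta z, w/\eta)\,\eta^{-(n+1)}\,d\eta$, observe that each $f_n$ inherits holomorphy on all of $D$ from $f$, identify $f_n = c_n P^{\mu}_{-n}$, and then invoke Theorem~\ref{thm:helgason} to force the exceptional eigenvalue and kill the coefficients outside the stated range. Your flagged concern about tracking the sign flip between the homogeneity index and the PFM index is exactly the only delicate point---the parenthetical ``equivalently $P^\mu_n$'' is not innocuous, since $P^\mu_n$ and $P^\mu_{-n}$ have different maximal domains, so when you carry out the promised double-check you should keep the two indices strictly separate.
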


			\begin{proof}
				(i) If $f\in\mathcal{H}(\Omega)$, then the functions
				$$ f_n(z,w):=\frac{1}{2\pi i} \int \limits_{\partial \D} f \left( \eta z,\frac{w}{\eta} \right) \eta^{-(n+1)} \, d\eta \, $$
				are holomorphic  and $n$--homogeneous on $\Omega$ and
				$$ f_n(z,w)=c_n P^{\mu}_{-n}(z,w) \, $$
                                with $\mu \in \C$ such that $\lambda=4\mu(\mu-1)$ and $\Re\mu \ge 1/2$.
				Hence, $\lambda= 4 m (m+1)$ for some $m \in \N_0$ since otherwise $P^{\mu}_{-n}$ is not holomorphic on $\Omega$ by Theorem \ref{thm:helgason}. If $\lambda=4m(m+1)$, then for each $|n|> m$ the function $P^{m+1}_{-n}$ is not holomorphic on $\Omega$ again by Theorem \ref{thm:helgason} which forces $c_n=0$ for those $n$. Parts (ii) and (iii) follow in the same way.
			\end{proof}
Since all PFM $P^{\mu}_n$ are holomorphic on $\Omega_*$, it is  natural to inquire whether the series (\ref{eq:PMSeries}) in Theorem \ref{thm:Helgason} converges on some bigger domain than $D$. In Section \ref{sec:CompareRudin Helgson} we shall see that in general this is not the case.

  \section{Comparison with the eigenvalue equation of the Laplacian on the unit disk and the Riemann sphere}\label{sec:CompareRudin Helgson}\

In this section we relate the spectral theory of the Laplacian on the unit disk $\D$ resp.~the Riemann sphere $\widehat{\C}$ with the spectral theory of the invariant Laplacian on $\Omega$ which we have developed so far. In particular, we show
that all eigenfunctions of $\Delta_{\D}$ on $\D$ resp.~$\Delta_{\widehat{\C}}$ on~$\widehat{\C}$
do have holomorphic extensions to eigenfunctions of $\Delta_{zw}$ on the bidisk $\D^2$ resp.~$\Omega$. Our approach is similar to the one employed in \cite[Section 1.6]{BerensteinGay1995} which deals exclusively with the hyperbolic Laplacian $\Delta_{\D}$ on the unit disk $\D$. However, we need some fine properties of hypergeometric functions, in addition to those which have been employed in \cite{BerensteinGay1995}. We begin with the following lemma which is crucial for our approach.

\begin{lemma} \label{lem:2F1Asymptotic}
  Let $\mu \in \C$ with $\Re \mu>0$. Then
  \begin{equation} \label{eq:2F1Asymptotics2}
    \lim \limits_{n \to \infty}
    \Fz{\mu,1-\mu}{n+1}{\omega}
    =
    1
    \end{equation}
  locally uniformly for $\omega \in \C \setminus [1,\infty)$.
\end{lemma}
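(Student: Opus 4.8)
The plan is to pass from the defining hypergeometric series, which converges only on the unit disk, to an Euler integral representation that is valid on all of $\C\setminus[1,\infty)$ and makes the limit transparent: as $n\to\infty$ the weight $t^{\mu-1}(1-t)^{n-\mu}$, suitably normalised, concentrates at $t=0$, where the remaining factor $(1-\omega t)^{\mu-1}$ equals $1$.

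First I would record the representation. Since $\Re\mu>0$, we have $n+1>\Re\mu$ for all large $n$, and, using that $\F$ is symmetric in its numerator parameters, Euler's integral (see \cite[\S15.6]{dlmf}) gives, for $\omega\in\C\setminus[1,\infty)$,
\[
  \Fz{\mu,1-\mu}{n+1}{\omega}
  =\frac{\Gamma(n+1)}{\Gamma(\mu)\,\Gamma(n+1-\mu)}\int_0^1 t^{\mu-1}(1-t)^{n-\mu}(1-\omega t)^{\mu-1}\,dt \, ,
\]
where $(1-\omega t)^{\mu-1}$ denotes the principal branch; this is legitimate because for $\omega\notin[1,\infty)$ and $t\in[0,1]$ the point $1-\omega t$ never lies on $(-\infty,0]$ (were $\omega t\in[1,\infty)$ with $t>0$, then $\omega=\omega t/t\in[1,\infty)$). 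Taking $\omega=0$ (the Beta integral) shows that the normalising factor times $\int_0^1 t^{\mu-1}(1-t)^{n-\mu}\,dt$ equals $1$, hence
\[
  \Fz{\mu,1-\mu}{n+1}{\omega}-1
  =\frac{\Gamma(n+1)}{\Gamma(\mu)\,\Gamma(n+1-\mu)}\int_0^1 t^{\mu-1}(1-t)^{n-\mu}\bigl[(1-\omega t)^{\mu-1}-1\bigr]\,dt \, .
\]

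Next I would estimate this difference uniformly for $\omega$ in a fixed compact set $K\subseteq\C\setminus[1,\infty)$. Passing to absolute values, $|t^{\mu-1}(1-t)^{n-\mu}|=t^{\Re\mu-1}(1-t)^{n-\Re\mu}$, and by the Beta integral together with Stirling's formula the total mass $\frac{|\Gamma(n+1)|}{|\Gamma(\mu)\Gamma(n+1-\mu)|}\int_0^1 t^{\Re\mu-1}(1-t)^{n-\Re\mu}\,dt=\frac{\Gamma(\Re\mu)}{|\Gamma(\mu)|}\cdot\frac{\Gamma(n+1-\Re\mu)}{|\Gamma(n+1-\mu)|}$ stays bounded (indeed it tends to $\Gamma(\Re\mu)/|\Gamma(\mu)|$). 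Since $\{1-\omega t:\omega\in K,\ t\in[0,1]\}$ is a compact subset of $\C\setminus(-\infty,0]$, the map $(\omega,t)\mapsto(1-\omega t)^{\mu-1}$ is uniformly continuous and bounded there; so given $\eps>0$ there is $\delta\in(0,1)$ with $|(1-\omega t)^{\mu-1}-1|<\eps$ for all $\omega\in K$, $t\in[0,\delta]$, and $M:=\sup\{|(1-\omega t)^{\mu-1}-1|:\omega\in K,\ t\in[0,1]\}<\infty$. Splitting the integral at $\delta$: the part over $[0,\delta]$ is bounded by $\eps$ times the (bounded) total mass, while on $[\delta,1]$ one has $(1-t)^{n-\Re\mu}\le(1-\delta)^{n-\Re\mu}$ once $n\ge\Re\mu$, so that part is at most $M(1-\delta)^{n-\Re\mu}\cdot\frac{|\Gamma(n+1)|}{|\Gamma(\mu)\Gamma(n+1-\mu)|}\int_0^1 t^{\Re\mu-1}\,dt=O\bigl(n^{\Re\mu}(1-\delta)^{n}\bigr)\to0$. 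Hence $\limsup_{n\to\infty}\sup_{\omega\in K}\bigl|\Fz{\mu,1-\mu}{n+1}{\omega}-1\bigr|\le C\eps$ with $C$ independent of $\eps$, and letting $\eps\to0$ proves the claim.

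The point that forces the Euler-integral approach — and the step I expect to be the main obstacle — is that $K$ generally contains points with $|\omega|\ge1$, so the assertion is really about the analytic continuation of $\Fz{\mu,1-\mu}{n+1}{\cdot}$ and cannot be read off the power series; the Euler representation delivers both the continuation and the uniformity in $\omega$ at once. The remaining delicate points are purely technical: that $1-\omega t$ stays off the branch cut uniformly on $K\times[0,1]$ (clear from the geometry of $K$) and that $\Gamma(n+1)/\Gamma(n+1-\mu)$ grows only like $n^{\mu}$ (Stirling), so that it is beaten by the exponential factor $(1-\delta)^n$ on $[\delta,1]$. Alternatively, once the series yields locally uniform convergence on the unit disk and one has a uniform bound on compacta of $\C\setminus[1,\infty)$, Vitali's theorem concludes; but the integral route produces both ingredients simultaneously.
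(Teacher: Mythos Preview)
Your argument is correct. The Euler integral representation you write down is the standard one (DLMF 15.6.1) applied to $\Fz{1-\mu,\mu}{n+1}{\omega}$, valid for $\Re(n+1)>\Re\mu>0$, and your concentration-of-mass estimate (split at $\delta$, Beta integral for the total mass, Stirling for the prefactor, exponential decay on $[\delta,1]$) goes through without difficulty; the branch-cut check is also right.

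The paper takes a genuinely different route. Rather than the Euler integral, it quotes a finite asymptotic expansion in $1/(n+1)_k$ with an explicit integral remainder (from Erd\'elyi/MacRobert, essentially a Taylor expansion of $(1-st\omega)^{\mu-1}$ in $s$ inside a double integral), and then bounds the remainder $\rho_{m+1}(n,\omega)$ by estimating that integral directly and invoking Stirling three times. Your approach is more self-contained: it needs only the standard Euler representation and an elementary approximate-identity argument, and it never introduces an auxiliary integer $m$ or a double integral. The paper's approach, on the other hand, delivers a full asymptotic expansion as a by-product (the terms $A_k\omega^k/k!(n+1)_k$), not just the leading term $1$; for the lemma as stated, your proof is shorter and requires less external input.
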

  \begin{proof} Fix $m \in \N_0$ such that $m> \Re \mu-2$ and let $n \in \N_0$.
    Then by \cite[Formula (11), p.~76]{Erdelyi1953} or \cite[p.~84]{MacRobert} there are complex numbers $A_k$, $k=1, \ldots, m$, such that
    \begin{equation*} \label{eq:AsympExpansion}
     \Fz{\mu,1-\mu}{n+1}{\omega}
     =
     1+\sum \limits_{k=1}^m \frac{A_k}{\left( n+1 \right) \ldots \left(n+k \right)} \frac{\omega^k}{k!}+\rho_{m+1}(n,\omega) \, ,
    \end{equation*}
    where
    \begin{equation} \label{eq:err1}
     \rho_{m+1}(n,\omega):=\tilde{\gamma}_n
     \int \limits_{0}^1 \int \limits_0^1 t^{1-\mu+m} (1-t)^{n+\mu-1} \left( 1- s t \omega\right)^{-\mu-1-m} (1-s)^m \, ds \, dt \, \cdot \, \omega^{m+1}
     \end{equation}
     and
     \begin{equation*} \label{eq:err2}
       \tilde{\gamma}_n:=\frac{\Gamma(n+1) \Gamma(\mu+m)}{\Gamma(1-\mu) \Gamma(\mu+n) \Gamma(\mu) \, m!} \, .
       \end{equation*}
Note that our choice of the non--negative integer $m$ guarantees that the integral in (\ref{eq:err1}) converges.
To prove (\ref{eq:2F1Asymptotics2})  it therefore suffices to show that $\rho_{m+1}(n,\omega) \to 0$  uniformly on every compact subset $K$ of $\C \setminus [1,\infty)$ as $n \to \infty$. Fix such a compact set $K$. Obviously,
$$ M_K:=\min \limits_{\omega \in K,0 \le s,t \le 1} |1-st \omega|>0 \, ,$$
and since $-\Re\mu-1-m<0$, we have
\begin{align} \nonumber
  |\rho_{m+1}(n,\omega)| & \le |\tilde{\gamma}_{n}| \int \limits_{0}^1 t^{1-\Re\mu+m} (1-t)^{n+\Re\mu-1} \, dt \frac{|\omega|^{m+1}}{M_K^{m+1+\Re\mu}} \\ &
    \nonumber                           =
    |\tilde{\gamma}_n| \frac{\Gamma(2+m-\Re\mu) \, \Gamma(n+\Re\mu)}{\Gamma(2+m+n)} \frac{|\omega|^{m+1}}{M_K^{m+1+\Re\mu}} \\ &=
    \label{eq:st1}                       \gamma^*                            \frac{\Gamma(n+\Re\mu)}{\Gamma(n+1)} \frac{\Gamma(n+1)}{|\Gamma(n+\mu)|} \frac{\Gamma(n+1)}{\Gamma(2+m+n)}  \frac{|\omega|^{m+1}}{M_K^{m+1+\Re\mu}}
\end{align}
 where
$$\gamma^*:=                         \frac{|\Gamma(\mu+m)| \, \Gamma(2+m-\Re\mu)}{|\Gamma(1-\mu)| \, |\Gamma(\mu)| \, m!} \, .$$
An application of Stirling's formula
\begin{equation} \label{eq:Stirling}
  \lim \limits_{n \to \infty} \frac{\Gamma(\alpha+n)}{\Gamma(n+1)} e^{-(\alpha-1) \log(n+1)} \to 1 \, , \quad \alpha \in \C \, ,
  \end{equation}
to each of the first three quotients in (\ref{eq:st1}) shows that there is a constant
$\gamma>0$ depending only on $\mu$, $m$ and $K$  such that
$$  |\rho_{m+1}(n,\omega)| \le \gamma \cdot \left(n+1 \right)^{\Re\mu-1}  \left(n+1 \right)^{1-\Re\mu}   \left(n+1 \right)^{-m-1} |\omega|^{m+1}  =\gamma  \cdot \left( \frac{|\omega|}{n+1} \right)^{m+1} \, .$$
In particular, $\rho_{m+1}(n,\omega) \to 0$ as $n \to \infty$ uniformly for $\omega \in K$. This completes the proof of (\ref{eq:2F1Asymptotics2}).
        \end{proof}

We are now in a position to prove Theorem \ref{thm:DiskvsBidiskIntro}.

\begin{proof}[Proof Theorem \ref{thm:DiskvsBidiskIntro}]
Let $f \in X_\lambda(\D)$, and write $\lambda=4 \mu(\mu-1)$ for some complex number $\mu \in \C$ with $\Re\mu \ge 1/2$.	By \cite[Theorem 16.18]{BerensteinGay1995} there are uniquely determined coefficients $c_n \in \C$ such that
			$$f(z)=\sum \limits_{n=-\infty}^{\infty} c_n P^{\mu}_n(z,\overline{z}) \, ;$$
the series converges absolutely and pointwise for each $z \in \D$. As it is shown in the proof  of \cite[Theorem 1.6.18]{BerensteinGay1995}, the coefficients $c_n$ do have the additional property that
			\begin{equation} \label{eq:BerensteinGay1}
				\sum \limits_{n=-\infty}^{\infty} |c_n| r^{|n|}<\infty \quad \text{ for all } 0<r<1 \, .
			\end{equation}
			We proceed to show that (\ref{eq:BerensteinGay1}) and Lemma \ref{lem:2F1Asymptotic} together guarantee   that the series
			\begin{equation} \label{eq:BerensteinGay2}
				F(z,w):=\sum \limits_{n=-\infty}^{\infty} c_n P^{\mu}_n(z,w)
			\end{equation}
			converges locally uniformly for $(z,w) \in \D^2$, and hence defines a function ${F \in \mathcal{H}(\D^2)}$ with the required properties. The identity principle shows further that $F$ is then uniquely determined.

\medskip
        It remains to prove the local uniform convergence of the series (\ref{eq:BerensteinGay2}) in $\D^2$. Fix $r \in (0,1)$.  We begin by noting that (\ref{eq:PMvsF}) and (\ref{eq:F_Def}) lead to
        \begin{equation} \label{eq:PMestimate1} \begin{split}
            \left| P^{\mu}_n(z,w) \right| & = \left| \binom{\mu}{|n|} \right| \, \left|F^{\mu}_{-n}(z,w) \right| \le
            \frac{|\Gamma(\mu +|n|)|}{|\Gamma(\mu)| \, \Gamma(|n|+1)} \left| \Fz{\mu,1-\mu}{|n|+1}{\frac{zw}{zw-1}} \right|\, r^{|n|}
        \end{split}
        \end{equation}
                            for all $ |z|,|w| \le r$.
                        Since the M{\"o}bius transformation
                        $$ \xi \mapsto \frac{\xi}{\xi-1}$$ maps the unit disk $\D$ conformally onto the half--plane $\{\zeta \in \C \, : \, \Re \zeta<1/2\}$, the set
                        $$ K:=\left\{ \frac{zw}{zw-1} \, : \, |z| \le r, \, |w| \le r \right\}$$
                        is a compact subset of $\C \setminus [1,\infty)$. Thus, Lemma \ref{lem:2F1Asymptotic} implies
    $$ \Fz{\mu,1-\mu}{|n|+1}{\frac{zw}{zw-1}} \to 1 \quad \text{ uniformly for } |z|\le r, \, |w| \le r\, $$
    as $|n| \to \infty$.
Combining this with  Stirling's formula (\ref{eq:Stirling}) we see from inequality (\ref{eq:PMestimate1})   that there is a constant $\gamma>0$ such that
\begin{equation} \label{eq:PMEstimate}
  \left| P^{\mu}_n(z,w)\right|  \le \gamma \cdot \left(|n|+1 \right)^{\Re\mu-1} \, r^{|n|}
  \end{equation}
  for all $|z| \le r$, $|w| \le r$ and every $n \in \Z$. 	This estimate together with (\ref{eq:BerensteinGay1}) implies that the series (\ref{eq:BerensteinGay2}) converges uniformly for $|z|,|w| \le r$, as required. In particular, we have shown that the restriction map
  $$ \mathcal{R}_h : X_{\lambda}(\D^2) \to X_{\lambda}(\D) \, , \qquad \mathcal{R}_h(F)(z):=F(z,\overline{z}) \quad (z \in \D) \, $$
is bijective. Since $X_{\lambda}(\D^2)$ and $X_\lambda(\D)$ are both Fr\'echet spaces with respect to the topology of locally uniform convergence on $\D^2$ resp.~$\D$ (see \cite[Corollary 1 to Theorem 4.2.4]{Rudin2008} for the fact that $X_{\lambda}(\D)$ is a Fr\'echet space) and the restriction map $\mathcal{R}_h$ is obviously continuous, its inverse is continuous as well by the Open Mapping Theorem. This completes the proof of Theorem \ref{thm:DiskvsBidiskIntro}.
\end{proof}

  \begin{theorem}[Smooth eigenfunctions of $\Delta_{\widehat{\C}}$ on $\widehat{\C}$ vs.~holomorphic eigenfunctions of $\Delta_{zw}$ on $\Omega$] \label{thm:SpherevsBiPlane}
                  Let $\lambda \in \C $. Then the following are equivalent:
  \begin{itemize}
  \item[(i)] There is a function $f \in C^{2}(\widehat{\C})$ such that $\Delta_{\widehat{\C}} f=\lambda f$ on $\widehat{\C}$.
  \item[(ii)] There is a function $F \in \mathcal{H}(\Omega)$ such that $\Delta_{zw} F=\lambda F$ on $\Omega$.
  \item[(iii)] $\lambda=4 m (m+1)$ for some $m \in \N_0$.
    \end{itemize}
\end{theorem}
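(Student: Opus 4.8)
The strategy is to establish the cycle $\mathrm{(iii)}\Rightarrow\mathrm{(ii)}\Rightarrow\mathrm{(i)}\Rightarrow\mathrm{(iii)}$; note that $\mathrm{(ii)}\Leftrightarrow\mathrm{(iii)}$ is already essentially contained in Section~\ref{sec:RudinsODE} (Corollary~\ref{cor:EW}(i) and Corollary~\ref{cor:F}), so the real content is the passage to and from the sphere. For $\mathrm{(iii)}\Rightarrow\mathrm{(ii)}$ it suffices to exhibit one global eigenfunction: if $\lambda=4m(m+1)$, set $\mu:=m+1$, so that $\lambda=4\mu(\mu-1)$ and $\Re\mu\ge1/2$, and take $F:=P^{\mu}_{0}$. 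By Theorem~\ref{thm:helgason} (the exceptional case with $n=0$, $|n|\le m$) the maximal domain of existence of $P^{\mu}_0$ is $\Omega$, so $F\in\mathcal{H}(\Omega)$, and $F$ solves $\Delta_{zw}F=4\mu(\mu-1)F=\lambda F$ by construction (Corollary~\ref{cor:F}).

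For $\mathrm{(ii)}\Rightarrow\mathrm{(i)}$ I would use the spherical restriction $z\mapsto(z,-\overline z)$. Since $z\cdot(-\overline z)=-|z|^2\neq1$ for $z\in\C$ and $\infty\cdot\infty=\infty\neq1$, the ``rotated diagonal'' $\{(z,-\overline z)\colon z\in\widehat{\C}\}$ lies in $\Omega$, so $f(z):=F(z,-\overline z)$ is well defined on all of $\widehat{\C}$. Reading $F$ in the standard chart near a finite point and in the flip chart near $(\infty,\infty)$ — where, writing $\zeta=1/z$, the rotated diagonal becomes $(-\overline\zeta,\zeta)$ — one sees that $f$ is real--analytic, in particular $f\in C^2(\widehat{\C})$. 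On $\C$ a one--line Wirtinger computation gives $\partial_z\partial_{\overline z}\big[F(z,-\overline z)\big]=-(\partial_z\partial_w F)(z,-\overline z)$, hence $\Delta_{\widehat{\C}}f(z)=-4(1+|z|^2)^2\partial_z\partial_{\overline z}f(z)=4\big(1-z(-\overline z)\big)^2(\partial_z\partial_w F)(z,-\overline z)=(\Delta_{zw}F)(z,-\overline z)=\lambda f(z)$; since $\C$ is dense in $\widehat{\C}$ and both sides are continuous, $\Delta_{\widehat{\C}}f=\lambda f$ on $\widehat{\C}$.

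The substantial implication is $\mathrm{(i)}\Rightarrow\mathrm{(iii)}$. Let $f\in C^2(\widehat{\C})$ with $\Delta_{\widehat{\C}}f=\lambda f$ (and, as we may assume, $f\not\equiv0$). Elliptic regularity makes $f$ real--analytic on $\widehat{\C}$. By the two--variable identity principle (Lemma~\ref{lem:RangeIDprincpiple}), $f|_\C$ has a unique holomorphic extension to a neighbourhood of the finite rotated diagonal, and — reading things in the flip chart — also a unique holomorphic extension across $(\infty,\infty)$; the two germs agree on overlaps by the same uniqueness, and since the rotated diagonal, the set $\Omega$ and the eigenvalue equation are all invariant under the rotations $(z,w)\mapsto(\eta z,w/\eta)$, these extensions glue to a single $F\in\mathcal{H}(D)$ on a \emph{rotationally invariant} subdomain $D\subseteq\Omega$ with $(0,0)\in D$ and $(\infty,\infty)\in D$. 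Transferring the equation exactly as above, $\Delta_{zw}F-\lambda F$ vanishes on the rotated diagonal, hence $\Delta_{zw}F=\lambda F$ on $D$ by Lemma~\ref{lem:RangeIDprincpiple}. Now Theorem~\ref{thm:Helgason} (write $\lambda=4\mu(\mu-1)$, $\Re\mu\ge1/2$) yields $F=\sum_n c_nP^{\mu}_n$ locally uniformly on $D$; as $F\not\equiv0$, some $c_n\neq0$, and the $n$--homogeneous component $c_nP^{\mu}_{-n}$ of $F$ is then a nonzero element of $\mathcal{H}(D)$, so $P^{\mu}_{-n}$ extends holomorphically to $D\cup\Omega_*$, a subdomain of $\Omega$ containing both $\D^2$ (as $\D^2\subseteq\Omega_*$) and the point $(\infty,\infty)$ (as $(\infty,\infty)\in D$). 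But by Theorem~\ref{thm:helgason} the maximal domain of existence of $P^{\mu}_{-n}$ is one of $\Omega_*,\Omega_+,\Omega_-,\Omega$, and of these only $\Omega$ contains $(\infty,\infty)$. Hence we are in the exceptional case, i.e.\ $\lambda=4m(m+1)$ for some $m\in\N_0$. (The same argument forces $c_n=0$ for $|n|>m$, so in fact $F=\sum_{|n|\le m}c_nP^{m+1}_n\in\mathcal{H}(\Omega)$ with $F(z,-\overline z)=f(z)$, which yields en passant a spherical extension map inverse to $\mathcal{R}_s$.)

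The step I expect to be the main obstacle is the gluing/extension in $\mathrm{(i)}\Rightarrow\mathrm{(iii)}$: verifying that the holomorphic continuation of $f$ along the rotated diagonal genuinely reaches $(\infty,\infty)$ — this is precisely where ``$f\in C^2(\widehat{\C})$'' rather than merely ``$f\in C^2(\C)$'' enters, and it is what separates the sphere from the disk, since the \emph{finite} rotated diagonal lies entirely inside $\Omega_*$ and by itself would detect nothing — and checking that the continuation can be arranged on a rotationally invariant domain containing the origin, so that Theorems~\ref{thm:helgason} and~\ref{thm:Helgason} become applicable. The remaining ingredients (elliptic regularity, the Wirtinger identities, the chart computations near $\infty$) are routine.
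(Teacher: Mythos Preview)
Your argument is correct and the cycle $\mathrm{(iii)}\Rightarrow\mathrm{(ii)}\Rightarrow\mathrm{(i)}$ matches the paper's. For the key implication $\mathrm{(i)}\Rightarrow\mathrm{(iii)}$, however, you take a genuinely different route. The paper never leaves the sphere: it forms the $n$--homogeneous components $f_n(z)=\tfrac{1}{2\pi}\int_0^{2\pi}f(e^{it}z)e^{-int}\,dt$ directly on $\widehat{\C}$, solves the resulting ODE to obtain $f_n(z)=c_n\binom{-\mu}{|n|}\Fz{\mu,1-\mu}{|n|+1}{\tfrac{|z|^2}{1+|z|^2}}\cdot(\cdots)$, and then reads off the constraint $\mu\in\N$ from the classical values of $\Fz{\mu,1-\mu}{|n|+1}{x}$ as $x\to1^-$. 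Your approach instead complexifies: you extend $f$ holomorphically to a rotationally invariant neighbourhood $D$ of the rotated diagonal in $\Omega$, apply Theorems~\ref{thm:helgason}/\ref{thm:Helgason} on $D$, and use the dichotomy to conclude that any nonzero homogeneous component, being holomorphic near $(\infty,\infty)$, must have maximal domain $\Omega$. This is more geometric and bypasses the special--function asymptotics entirely; in return you must justify the extension step. Your one--line justification for rotational invariance of $D$ is too quick---the correct argument is that the rotated diagonal is compact and $\partial\D$--invariant, so if $F$ extends to some open $U$ containing it, then $D:=\bigcap_{\eta\in\partial\D}\varrho_\eta(U)\subseteq U$ is still open (tube lemma), rotationally invariant, and contains the rotated diagonal; take the component through the origin. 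Once this is in place, Theorem~\ref{thm:helgason} (as stated) indeed gives the extension of the homogeneous component to $D\cup\Omega_*\ni(\infty,\infty)$, and your conclusion follows. You correctly identified this as the crux.
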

Theorem \ref{thm:SpherevsBiPlane} is a special case of Theorem \ref{thm:2Intro}.

		\begin{proof} The implication (ii) $\Rightarrow$ (iii) is Corollary \ref{cor:EW}, and (iii) $\Rightarrow$ (ii) is Theorem \ref{thm:helgason}. Clearly, (ii) implies (i), so we only need to prove that (i) implies (ii). Accordingly, we write $\lambda=4 \mu(\mu-1)$ with $\mu \in \C$ and $\Re\mu \ge 1/2$.
			For $n \in \Z$ consider
			$$ f_n(z):=\frac{1}{2\pi i} \int \limits_{\partial \D} f (\eta z) \, \eta^{-(n+1)} \, d \eta=\frac{1}{2\pi} \int \limits_{0}^{2\pi} f(e^{it} z) e^{-i n t} \, dt \, .$$
			Then $f_n \in C^{2}(\widehat{\C})$ is  $n$--homogeneous and
			$\Delta_{\widehat{\C}} f_n=4 \mu(\mu-1) f_n$ on $\widehat{\C}$. Arguing in a similar way as in the proof of Theorem \ref{thm:helgason} we see that there is
                        a constant $c_n \in \C$ such that
			$$ f_n(z)=c_n \binom{-\mu}{|n|} \Fz{\mu,1-\mu}{|n|+1}{\frac{|z|^2}{1+|z|^2}}  \cdot \begin{cases} \overline{z}^{|n|} & \text{ if } n \ge 0\,, \\            (-z)^{|n|} & \text{ if } n \le 0\, .\end{cases}$$Therefore, the behavior of $f_n$ as $z \to \infty$ depends essentially on the value $\prescript{}{2}{F_1}\left(\mu,1-\mu;|n|+1;1 \right)$. It is well--known, see \cite[Theorem 2.1.3]{askey}, that
			$$ \lim \limits_{x \to 1-} \frac{\Fz{\mu,1-\mu}{|n|+1}{1}}{-\log (1-x)}=\frac{1}{\Gamma(\mu) \Gamma(1-\mu)} \,  \quad
			\text{ if } n=0 \; ,  $$
			and, see \cite[Theorem 2.2.2]{askey},
			$$ \Fz{\mu,1-\mu}{|n|+1}{1} =\frac{\Gamma(|n|+1) \Gamma(|n|)}{\Gamma(|n|+1-\mu) \Gamma(|n|+\mu)} \,  \quad \text{ if } n\not=0 \, .$$
			Hence, if $n=0$, then $f_n$ is defined at the point $\infty$ only if $c_0=0$ or $\mu \in \N$.
			If $n\not=0$, then $\prescript{}{2}{F_1}\left(\mu,1-\mu,|n|+1,1 \right)
			=0 $ if and only if  $\mu=|n|+1+k$ for some $k \in \N_0$.
			Thus, $f_n :\widehat{\C} \to \C$ is defined at the point $\infty$ only if $c_n=0$ or $\mu \in \N$, $\mu > |n|$. We conclude that either $c_n=0$ for every $n \in \Z$ and then $f \equiv 0$, or $c_n\not=0$ for at least one $n \in \Z$ and then $\lambda=4 m(m+1)$ for some $m \in \N_0$. In the latter case, we see that $c_n\not=0$ forces $m+1=\mu=|n|+1+k$ for some $k \in \N_0$, so $m=|n|+k$. In particular, $c_n=0$ for all $|n|>m$, so $f(z)=F(z,-\overline{z})$ for
			\[ F(z,w)=\sum \limits_{n=-m}^m c_n P^{\mu}_n(z,w) \in \mathcal{H}(\Omega) \, .\qedhere \]
		\end{proof}
		\begin{remark} \label{rmk:Laplacian}
		We see that the spectrum of the hyperbolic Laplacian $\Delta_{\D}$ is $\C$ whereas the spectrum of the spherical Laplacian $\Delta_{\widehat{\C}}$ is notably smaller as it only consists of the scalars $4m(m+1)$ for $m\in\N_0$. Furthermore, by Theorem \ref{thm:SpherevsBiPlane} every eigenfunction of the spherical Laplacian can be extended to the whole domain $\Omega$. This is different to the hyperbolic case where the extension to $\D^2$ provided by Theorem \ref{thm:DiskvsBidiskIntro} is maximal at least for the category of rotationally invariant domains as the following example shows.
\end{remark}

              \begin{example}
			Fix $\mu\in\C$ with $\Re\mu\geq\frac{1}{2}$, let $(z_0,w_0)\in\Omega\setminus\D^2$ with $z_0 w_0 \not \in [1,\infty)$, and define
            \[b_n :=\left(\frac{\Gamma(\mu+|n|)}{\Gamma(\mu)|n|!}\prescript{}{2 }{F_1}\left(\mu,1-\mu;|n|+1;\frac{z_0w_0}{z_0w_0-1}\right)\right)^{-1}\, .\]
                        Note that asymptotically
                        $$|b_n| \sim \left( |n|+1 \right)^{1-\Re\mu} \qquad (|n| \to \infty) \, . $$
                        This follows from Lemma \ref{lem:2F1Asymptotic} and Stirling's formula (\ref{eq:Stirling}).
                        In view of (\ref{eq:PMEstimate}) this shows that the series $$F(z,w)=\sum\limits_{n=-\infty}^\infty b_n P_n^{\mu}(z,w)$$ converges locally uniformly in $\D^2$. Hence $F \in \mathcal{H}(\D^2)$ and, since every function $P^\mu_n$ is an eigenfunction of $\Delta_{zw}$, we have $\Delta_{zw} F=4 \mu(\mu-1) F$ in $\D^2$. In particular,
                        $$f(z):=F(z,\overline{z})=\sum \limits_{n=-\infty}^{\infty} b_n P^\mu_n(z,\overline{z})$$
                        is of class $C^{2}(\D)$ and an eigenfunction of $\Delta_{\D}$. By Theorem \ref{thm:DiskvsBidiskIntro}, $F \in \mathcal{H}(\D^2)$ is the uniquely determined function in $\mathcal{H}(\D^2)$ such that $F(z,\overline{z})=f(z)$ in $\D$.
Now assume $F\in\mathcal{H}(D)$, where~$D$ is a rotationally invariant domain such that $\D^2\subsetneq D \subseteq\Omega$ and $(z_0,w_0) \in D \setminus \D^2$. By Theorem \ref{thm:Helgason} there are coefficients $(\tilde{b}_n)_{n\in\Z}\subseteq \C$ such that $$F(z,w)=\sum\limits_{n=-\infty}^\infty \tilde{b}_n P_n^{\mu}(z,w)$$ for all $(z,w)\in D$. Since the coefficients are uniquely determined, we conclude $\tilde{b}_n=b_n$. This is~a contradiction because
			\[F(z_0,w_0)=\sum\limits_{n=0}^\infty w_0^n+\sum\limits_{n=1}^\infty z_0^n\]
			is a divergent series since $|z_0|\geq1$ or $|w_0|\geq1$.
   \end{example}

		\begin{corollary}
			\label{lem:NoContinuation}
			Let $\lambda\in\C$ and  $D$ a rotationally invariant subdomain of $\Omega$ such that $\D^2\subsetneq D$. Then there exists a function $f \in X_\lambda(\D^2)$ that cannot be analytically continued to $D$.
		\end{corollary}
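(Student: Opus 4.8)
The plan is to obtain the corollary as an immediate consequence of the Example that directly precedes it: that Example already constructs, for a suitable base point $(z_0,w_0)\in\Omega\setminus\D^2$, a function in $X_\lambda(\D^2)$ which cannot be continued holomorphically to any rotationally invariant domain strictly larger than $\D^2$ that still contains $(z_0,w_0)$. The only thing left to do is to exhibit such a base point genuinely inside the prescribed $D$.

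First I would write $\lambda=4\mu(\mu-1)$ with $\Re\mu\ge 1/2$. Since $\D^2$ is exactly the interior of $\oD^2$ in $\widehat{\C}^2$, an open set $D$ with $\D^2\subsetneq D$ cannot be contained in $\oD^2$, so $D\setminus\oD^2$ is a nonempty open subset of $\Omega$. Inside this open set I need a point $(z_0,w_0)$ that additionally lies in $\Omega_*$ (so that every Poisson Fourier mode $P_n^\mu$ is defined at $(z_0,w_0)$) and for which $\Fz{\mu,1-\mu}{n+1}{\frac{z_0w_0}{z_0w_0-1}}\ne 0$ for all $n\in\N_0$ (so that the coefficients $b_n$ of the Example are finite and nonzero). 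The set $\Omega\setminus\Omega_*$ is closed and nowhere dense in $\Omega$, and for each $n$ the zero set of the holomorphic function $(z,w)\mapsto\Fz{\mu,1-\mu}{n+1}{zw/(zw-1)}$ on $\Omega_*$ is a proper analytic subset --- it omits the origin, where this function equals $1$ --- hence nowhere dense. By the Baire category theorem the nonempty open set $D\setminus\oD^2$ cannot be swallowed by this countable union of closed nowhere dense sets, so a base point $(z_0,w_0)$ with all required properties exists.

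With this base point the Example applies verbatim and produces a nonconstant $F=\sum_{n\in\Z}b_nP_n^\mu\in\mathcal{H}(\D^2)$ with $\Delta_{zw}F=\lambda F$, i.e.\ $F\in X_\lambda(\D^2)$, such that $F$ admits no holomorphic extension to any rotationally invariant domain $\widetilde D$ with $\D^2\subsetneq\widetilde D\subseteq\Omega$ and $(z_0,w_0)\in\widetilde D$: any such extension would lie in $X_\lambda(\widetilde D)$, would carry by Theorem \ref{thm:Helgason} a Poisson Fourier expansion on $\widetilde D$ whose coefficients must coincide with the $b_n$ (uniqueness, together with $\D^2\subseteq\widetilde D$), and evaluating the resulting locally uniformly convergent series at $(z_0,w_0)$ would force $\sum_{n\ge 0}w_0^n+\sum_{n\ge 1}z_0^n$ to converge --- impossible since $|z_0|\ge 1$ or $|w_0|\ge 1$. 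Since the prescribed $D$ is precisely such a $\widetilde D$, with $(z_0,w_0)\in D\setminus\oD^2\subseteq D\setminus\D^2$, the function $f:=F$ cannot be analytically continued to $D$. I do not anticipate a serious obstacle here: the entire analytic content is already contained in the Example and in Theorem \ref{thm:Helgason}, and the sole point demanding attention is the Baire category argument that places the base point genuinely inside $D$ (rather than merely somewhere in $\Omega\setminus\D^2$).
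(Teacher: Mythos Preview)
Your proof is correct and follows the paper's intended route: the Corollary is stated without proof immediately after the Example, and you have done exactly what the paper leaves implicit, namely verifying that for any prescribed rotationally invariant $D\supsetneq\D^2$ one can actually find a base point $(z_0,w_0)\in D\setminus\D^2$ to which the Example applies. Your Baire category argument is a welcome addition of rigor, since the paper's Example tacitly assumes the denominators defining the $b_n$ are nonzero without justification; your observation that each $\Fz{\mu,1-\mu}{|n|+1}{\cdot}$ equals $1$ at the origin and hence has nowhere dense zero set cleanly settles this.
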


\section{Poisson Fourier modes and the M{\"o}bius group}\label{sec:Pullback}\

   Recall the M{\"o}bius group $\mathcal{M}(\D^2)$ of the bidisk from \eqref{eq:OmegaAutomorphismBidisk}. In order to give a characterization of the  M{\"o}bius invariant eigenspaces of $\Delta_{zw}$ (e.g. proving Theorem \ref{thm:RudinIntro}), it will turn out that all we need to understand are precompositions of PFM with automorphisms in $\mathcal{M}(\D^2)$, that is
  \[P^{\mu}_n \circ T \text{ with }\Re \mu \ge 1/2 \text{ and }T \in \mathcal{M}(\D^2)\,.\]
  Moreover, since $\mathcal{M}(\D^2)$ is closely related to the M{\"o}bius group $\mathcal{M}$ of $\Omega$ from \eqref{eq:OmegaAutomorphismIntro}, we can use the following description of $\mathcal{M}$.
  \begin{lemma}
			\label{lem:MoebiusGenerators}
			The group $\mathcal{M}$ is generated by the flip map $\mathcal{F}(u,v) \coloneqq (1/v,1/u)$ and the mappings
			\begin{equation}
				\label{eq:MoebiusGenerators}
				T_{z,w}(u,v)
				\coloneqq
				\left(
				\frac{z-u}{1-wu} ,
				\frac{w-v}{1-zv}
				\right)
				\quad \textrm{and} \quad
				\varrho_{\gamma}(u,v)
				\coloneqq
				\left(
				\gamma u, \frac{1}{\gamma }v
				\right)
				\, , \qquad
				(u,v) \in \Omega
			\end{equation}
			with $(z,w) \in \Omega \cap \C^2$ and $\gamma \in \C^*$. More precisely, for every $T \in \mathcal{M}$ there exist $z,w \in \C$ and $\gamma \in \C^*$ such that
			\begin{equation*}
			\label{eq:MoebiusGeneratorsExplicit}
				T
				=
				\varrho_\gamma
				\circ
				T_{z,w}\quad\text{or}\quad T
				=
				\varrho_\gamma
				\circ
				T_{z,w}
				\circ
				\mathcal{F}
				\, .
			\end{equation*}
		\end{lemma}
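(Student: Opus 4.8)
The plan is to exploit the description of $\mathcal{M}$ in \eqref{eq:OmegaAutomorphismIntro} as being induced by the group $\mathcal{M}(\widehat{\C})$ of all M\"obius transformations of the sphere. First I would recall that $\mathcal{M}(\widehat{\C})$ is generated by the maps $z\mapsto z+b$ (for $b\in\C$), $z\mapsto\gamma z$ (for $\gamma\in\C^*$) and the inversion $z\mapsto 1/z$; in fact, every $\psi\in\mathcal{M}(\widehat{\C})$ can be written as a composition of such generators. The idea is to transport each generator of $\mathcal{M}(\widehat{\C})$ through the two ``lifting'' recipes $\psi\mapsto\big((z,w)\mapsto(\psi(z),1/\psi(1/w))\big)$ and $\psi\mapsto\big((z,w)\mapsto(\psi(w),1/\psi(1/z))\big)$ appearing in \eqref{eq:OmegaAutomorphismIntro}, and check that each resulting automorphism of $\Omega$ lies in the subgroup generated by $\mathcal{F}$, the $T_{z,w}$ and the $\varrho_\gamma$.

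Concretely, I would carry out the following steps. \textbf{Step 1:} Observe that the second family in \eqref{eq:OmegaAutomorphismIntro} is obtained from the first by postcomposition with the flip $\mathcal{F}(u,v)=(1/v,1/u)$; indeed a direct computation shows $(z,w)\mapsto(\psi(w),1/\psi(1/z))$ equals $\big((u,v)\mapsto(\psi(u),1/\psi(1/v))\big)\circ\mathcal{F}$. Hence it suffices to show that every automorphism of the first type, call it $S_\psi$, lies in the group $G$ generated by $\mathcal{F}$, the $T_{z,w}$ and the $\varrho_\gamma$, and that $G\subseteq\mathcal{M}$. \textbf{Step 2:} Check the generators: for $\psi(z)=\gamma z$ one gets $S_\psi=\varrho_\gamma$ directly; for $\psi(z)=1/z$ one computes $S_\psi(u,v)=(1/u,1/v)$, which is $\mathcal{F}$ composed with the coordinate swap — but the swap $(u,v)\mapsto(v,u)$ itself equals $T_{0,0}\circ\mathcal{F}$ up to a sign, or more cleanly $\mathcal{F}\circ(\text{swap})$ can be expressed via $T_{0,0}$ since $T_{0,0}(u,v)=(-u,-v)=\varrho_{-1}(u,v)$; so one must be a little careful and instead note $\psi(z)=1/z$ gives $1/\psi(1/w)=w$, whence $S_\psi(u,v)=(1/u,w)$... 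I would recompute this carefully, the point being that the inversion lift is a product of $\mathcal{F}$ and an elementary map already in $G$. \textbf{Step 3:} For the translations $\psi(z)=(z+b)$ — or better, to match $T_{z,w}$, the maps $\psi_a(z)=\frac{a-z}{1-\bar a z}$ replaced by their holomorphic analogue $\psi_{z,w}(\zeta)=\frac{z-\zeta}{1-w\zeta}$ — one verifies by direct substitution that the lift $S_{\psi}$ with $\psi=\psi_{z,w}$ in the first slot produces exactly $T_{z,w}$, using the defining relation $1/\psi(1/v)=\frac{w-v}{1-zv}$. \textbf{Step 4:} Since these $\psi$'s (dilations, inversions, and the maps $\zeta\mapsto\frac{z-\zeta}{1-w\zeta}$) generate all of $\mathcal{M}(\widehat{\C})$, and since $\psi\mapsto S_\psi$ is a group anti-homomorphism (or homomorphism, depending on convention) from $\mathcal{M}(\widehat{\C})$ into $\Aut(\Omega)$, every $S_\psi$ — and hence, by Step 1, every element of $\mathcal{M}$ — lies in $G$. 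Conversely each of $\mathcal{F}$, $T_{z,w}$, $\varrho_\gamma$ is visibly of the form displayed in \eqref{eq:OmegaAutomorphismIntro}, so $G=\mathcal{M}$.

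For the ``more precisely'' part, I would use the standard normal-form decomposition of a M\"obius transformation of $\widehat{\C}$: either $\psi$ fixes $\infty$, in which case $\psi(\zeta)=\gamma\zeta+\beta=\varrho$-type composed with a $T$-type map and no flip is needed; or $\psi(\infty)\neq\infty$, in which case one inversion (corresponding to one flip $\mathcal{F}$) suffices, and the remaining affine part contributes the $\varrho_\gamma\circ T_{z,w}$ factor. Tracking which slot of \eqref{eq:OmegaAutomorphismIntro} one is in accounts for the ``or'' in the statement. The main obstacle I anticipate is purely bookkeeping: getting the compositions in the correct order (these lifts reverse or preserve composition in a way one must pin down) and verifying that the specific normalized generators $T_{z,w}$ and $\varrho_\gamma$ — rather than some other convenient generating set — actually suffice, i.e.\ that the parameters $z,w\in\C$ and $\gamma\in\C^*$ can always be chosen to realize an arbitrary $\psi\in\mathcal{M}(\widehat{\C})$ in at most the asserted number of factors. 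No deep idea is needed; the care lies entirely in the explicit verification of the composition formulas.
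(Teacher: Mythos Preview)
The paper does not prove this lemma in the text: immediately after the statement it is attributed to Lemma~2.2 of the companion paper \cite{HeinsMouchaRoth2} (with $\Phi_{z,w}$ replaced by $T_{z,w}=\Phi_{z,w}\circ\varrho_{-1}$). So there is no in-paper argument to compare against; your plan is being measured against a result quoted from elsewhere.

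That said, your overall strategy---lift a generating set of $\mathcal{M}(\widehat{\C})$ through $\psi\mapsto S_\psi$ and account for the second family via the flip---is the natural one and would yield the ``generation'' part. But several of your concrete computations are off, and these are exactly the places where you write ``I would recompute this carefully.'' For instance, your Step~1 claim $R_\psi=S_\psi\circ\mathcal{F}$ is false: one has $S_\psi\circ\mathcal{F}(u,v)=(\psi(1/v),1/\psi(u))=R_{\psi\circ\iota}(u,v)$ with $\iota(\zeta)=1/\zeta$, not $R_\psi$. Similarly, in Step~2 the lift of $\zeta\mapsto 1/\zeta$ to the first family is $S_\iota(u,v)=(1/u,1/v)$, not anything involving a swap of coordinates; your attempted reduction of this to $T_{0,0}$ and $\mathcal{F}$ does not go through as written.

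More importantly, for the ``more precisely'' part your proposed dichotomy ($\psi$ fixes $\infty$ or not) is not the right organizing principle. A direct computation gives $\varrho_\gamma\circ T_{z,w}=S_\psi$ with $\psi(u)=\gamma\tfrac{z-u}{1-wu}$, and the $\psi$'s arising this way are exactly those with $\psi(0)\in\C$ and $\psi(\infty)\neq 0$; likewise $\varrho_\gamma\circ T_{z,w}\circ\mathcal{F}=R_\phi$ with $\phi(v)=\gamma\tfrac{zv-1}{v-w}$, which hits exactly the $\phi$ with $\phi(0)\neq 0$ and $\phi(\infty)\in\C$. So the normal form one must aim for is governed by the values of $\psi$ at $0$ and $\infty$, and one has to pass between the $S$- and $R$-descriptions of a given element of $\mathcal{M}$ (using $\mathcal{F}\circ S_\psi=R_{\psi\circ\iota}$) to cover the boundary cases. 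Your sketch does not yet do this bookkeeping; without it the argument for the explicit decomposition is incomplete.
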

  \begin{remark}
      Lemma \ref{lem:MoebiusGenerators} is exactly Lemma 2.2 in \cite{HeinsMouchaRoth2} where we have replaced the automorphisms $\Phi_{z,w}$ by the $T_{z,w} = \Phi_{z,w} \circ \varrho_{-1}$ automorphisms in \eqref{eq:MoebiusGeneratorsExplicit}. The reason will become apparent in \eqref{eq:MoebiusPoissonID} and \eqref{eq:PoissonkernelasMoebiustrafo}. Essentially, the automorphisms $T_{z,w}$ in \eqref{eq:MoebiusGenerators} are precisely the automorphisms of $\Omega$ interchanging a given point $(z,w)\in\Omega\cap(\C\times\C)$ with $(0,0)$ -- instead of only sending $(0,0)$ to $(z,w)$. See \cite[Sec.~2.3]{Annika} for more details on this.
  \end{remark}
		Therefore, understanding precompositions of a PFM $P_n^{\mu}$ with elements ${T\in\mathcal{M}(\D^2)}$ breaks down to understanding precompositions with the above generators. Note that the mappings $\mathcal{F}$ and $T_{z,w}$ for $w\neq\cc{z}$ are not elements of $\mathcal{M}(\D^2)$. However, the precompositions with these mappings still make sense for those PFM defined on all of $\Omega$, that is, in view of Theorem \ref{thm:helgason} the precompositions
        \[P^{-m}_n \circ T \text{ with }m\in\N_0\text{ and }T \in \mathcal{M}\]
        are well--defined.
        In the case that $T=\varrho_\gamma$ we have ${P_n^{\mu}\circ T=\gamma^{-n}P_n^{\mu}}$ by the $(-n)$--homogeneity of the PFM for all $\mu\in\C$. Next, if $T=\mathcal{F}$, it is easily seen by direct verification based on (\ref{eq:PoissonFourierExplicit}) that
        \begin{equation*}\label{eq:PoissonFourierforinversepoints}
				(P_{ n}^{-m}\circ\mathcal{F})(z,w)=(-1)^mP_{n}^{-m}(z,w)\,
			\end{equation*}
   for $m\in\N_0$ and $|n|\leq m$. In order to treat the case that $T=T_{z,w}$ resp.\ $T=T_{z,\cc{z}}$, some preliminary observations are useful. First, every automorphism $T_{z,w}$ is induced by a M{\"o}bius transformation $\psi_{z,w}$ of the form
		\begin{equation}\label{eq:phizw}
			\psi_{z,w}(u):=\frac{z-u}{1-wu}\,,\qquad
		\end{equation}
        in the sense that $T_{z,w}(u,v) = (\psi_{z,w}(u), 1/\psi_{z,w}(1/v))$.
		\begin{remark}
			Choosing $w=\overline{z}$ in \eqref{eq:phizw} yields all self--inverse automorphisms of $\D$ except for the identity, and, similarly, the choice {$w=-\overline{z}\in\D$} yields all self--inverse rigid motions of $\widehat{\C}$.
		\end{remark}
    Further, we will employ the definition of the PFM via integrals from \eqref{eq:PoissonFourier}. By Remark \ref{rem:whenPFMindeedisPoissonkernelpowerFouriermode}, when considering points $(z,\cc{z})\in\D^2$, this definition coincides with the $n$--th Fourier mode of the $\mu$--power of the Poisson kernel on the unit disk. The Poisson kernel on $\D$ satisfies the well--known properties
    \begin{equation}\label{eq:MoebiusPoissonID}
        P\left(z,\cc{z};\xi\right)=\frac{\psi_{z,\cc{z}}'(\xi)}{\psi_{z,\cc{z}}(\xi)}\xi
    \end{equation}
      and
    \begin{equation}\label{eq:PoissonkernelasMoebiustrafo}
					P\left(\psi_{z,\cc{z}}(u), \cc{\psi_{z,\cc{z}}(u)}; \psi_{z,\cc{z}}(\xi)\right)
					=
					\frac{P(u,\cc{u}; \xi)}{P(z,\cc{z};\xi)}
	\end{equation}
    where $z,u\in\D$, $\xi \in \del \D$ and $\psi_{z,\cc{z}}\in\mathcal{M}(\D)$, see \cite[Th.~3.3.5]{Rudin2008}.
    \begin{remark} \label{rem:DefP}
        One can define $P$ on $\Omega \times \partial\D$. Then $P$ is never zero, and for each $\xi \in \partial \D$ the function $P(\cdot;\xi)$ is meromorphic in the sense of \cite[Chapter VI.2]{FischerLieb2012}. The identities \eqref{eq:MoebiusPoissonID} and \eqref{eq:PoissonkernelasMoebiustrafo} then take the form
			\begin{align*}
				P\left(\psi_{z,w}^{-1}(0),1/\psi_{z,w}^{-1}(\infty);\xi\right)&=\frac{\psi_{z,w}'(\xi)}{\psi_{z,w}(\xi)}\xi =P\left(z,w;\xi\right)\, ,\\
				P\left(\psi_{z,w}(u),\frac{1}{\psi_{z,w}\left(1/v\right)};\psi_{z,w}(\xi)\right)&=\frac{P(u,v;\xi)}{P\left(\psi_{z,w}^{-1}(0),1/\psi_{z,w}^{-1}(\infty);\xi\right)}=\frac{P(u,v;\xi)}{P\left(z,w;\xi\right)} \,,
			\end{align*}
        where $(z,w)\in\Omega\cap\C^2$, $(u,v)\in\Omega$ and $\xi\in\partial\D$.
    \end{remark}

  With these preparations we are now in a position to analyze the precompositions of $P^{\mu}_n$ with elements of $\mathcal{M}(\D^2)$ resp.\ $\mathcal{M}$. We start with the simplest case, $P^{m+1}_0 \circ T_{u,v}$:
			\begin{proposition}\label{prop:Pullbackproposition}
				Let $m\in\N_0$, $(z,w)\in\Omega$ and $(u,v) \in \Omega \cap \C^2$. Then
				\begin{equation}\label{eq:MoebiusPullback}
					\big(
					P_0^{m+1} \circ T_{u,v}
					\big)(z,w)=\big(
					P_0^{-m} \circ T_{u,v}
					\big)(z,w)=\sum_{j=-m}^{m}
					P^{m+1}_{-j}\left(u,v\right)
					P^{-m}_{j}(z,w)\,.
				\end{equation}
			\end{proposition}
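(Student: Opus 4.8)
\textit{Proof proposal.}
The plan is to settle the left equality at once and then reduce the right one to the ``diagonal'' points $(z,\overline{z})\in\D^2$, where the transformation laws \eqref{eq:MoebiusPoissonID}--\eqref{eq:PoissonkernelasMoebiustrafo} for the classical Poisson kernel become available. The left equality is immediate: taking $n=0$ in \eqref{eq:Poissonplusminus} gives $P^{m+1}_0=P^{-m}_0$ on $\D^2$, hence on all of $\Omega$ by analytic continuation, both being exceptional Poisson Fourier modes holomorphic on $\Omega$ by Theorem~\ref{thm:helgason}; composing with $T_{u,v}$ yields $P^{m+1}_0\circ T_{u,v}=P^{-m}_0\circ T_{u,v}$.

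For the right equality, write $\lambda=4m(m+1)=4(-m)(-m-1)$. For fixed $(u,v)\in\Omega\cap\C^2$ both sides of \eqref{eq:MoebiusPullback}, viewed as functions of $(z,w)$, are $\lambda$-eigenfunctions of $\Delta_{zw}$ on $\Omega$ (for the left side, $P^{m+1}_0\in X_\lambda(\Omega)$ and $T_{u,v}\in\mathcal{M}$ leaves $\Delta_{zw}$ invariant). More importantly, the map $((u,v),(z,w))\mapsto T_{u,v}(z,w)$ is holomorphic from the connected set $(\Omega\cap\C^2)\times\Omega$ into $\Omega$ (only M\"obius expressions occur and $T_{u,v}$ maps $\Omega$ into $\Omega$, the passage through the point at infinity being handled in the flip chart), so, composing with $P^{m+1}_0\in\mathcal{H}(\Omega)$, the left side of \eqref{eq:MoebiusPullback} is holomorphic in all four variables on $(\Omega\cap\C^2)\times\Omega$, and so is the right side. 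By Lemma~\ref{lem:RangeIDprincpiple} — applied first in the variable $(u,v)$ with $(z,w)=(z,\overline{z})$ frozen, then in the variable $(z,w)$, followed by the ordinary identity principle on the connected manifold $\Omega$ — it therefore suffices to prove \eqref{eq:MoebiusPullback} when $v=\overline{u}$ and $w=\overline{z}$ with $u,z\in\D$.

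On this diagonal one has $T_{u,\overline{u}}(z,\overline{z})=(\psi_u(z),\overline{\psi_u(z)})$ with $\psi_u\coloneqq\psi_{u,\overline{u}}\in\mathcal{M}(\D)$, and, by \eqref{eq:PoissonFourier} together with Remark~\ref{rem:whenPFMindeedisPoissonkernelpowerFouriermode},
\[
(P^{m+1}_0\circ T_{u,\overline{u}})(z,\overline{z})=\frac{1}{2\pi}\int_0^{2\pi}P\big(\psi_u(z),\overline{\psi_u(z)};e^{it}\big)^{m+1}\,dt\,.
\]
Substituting $e^{it}=\psi_u(e^{i\theta})$ (which traverses $\partial\D$ once), using that the arc-length Jacobian is $|\psi_u'(e^{i\theta})|=(1-|u|^2)/|1-\overline{u}e^{i\theta}|^2=P(u,\overline{u};e^{i\theta})$, and applying \eqref{eq:PoissonkernelasMoebiustrafo}, this integral becomes
\[
\frac{1}{2\pi}\int_0^{2\pi}P(z,\overline{z};e^{i\theta})^{m+1}\,P(u,\overline{u};e^{i\theta})^{-m}\,d\theta\,.
\]
Now expand both factors into Fourier series in $\theta$; by Remark~\ref{rem:whenPFMindeedisPoissonkernelpowerFouriermode} their coefficients are $P^{m+1}_k(z,\overline{z})$ and $P^{-m}_\ell(u,\overline{u})$ respectively, and the latter vanish for $|\ell|>m$ by Remark~\ref{rem:PFMproperties}(b) (equivalently, $P(u,\overline{u};\cdot)^{-m}$ is a trigonometric polynomial), so by orthogonality of $\{e^{ik\theta}\}$ the integral equals $\sum_{k=-m}^{m}P^{m+1}_k(z,\overline{z})\,P^{-m}_{-k}(u,\overline{u})$. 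Applying \eqref{eq:Poissonplusminus} to each of the two factors — the two proportionality constants being reciprocal, they cancel — rewrites each summand as $P^{-m}_k(z,\overline{z})\,P^{m+1}_{-k}(u,\overline{u})$, whence the sum equals $\sum_{j=-m}^{m}P^{m+1}_{-j}(u,\overline{u})\,P^{-m}_j(z,\overline{z})$, which is exactly the right-hand side of \eqref{eq:MoebiusPullback} restricted to the diagonal.

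The main obstacle is organizational rather than deep: one must justify the reduction to the diagonal — joint holomorphy of $(u,v,z,w)\mapsto T_{u,v}(z,w)$ into $\Omega$ (including at the points at infinity) plus two applications of the two-variable identity principle, Lemma~\ref{lem:RangeIDprincpiple} — and, on the computational side, carry out the change of variables on $\partial\D$ cleanly, the crucial point being the identification of the Jacobian $|\psi_u'|$ on $\partial\D$ with the Poisson kernel $P(u,\overline{u};\cdot)$, which is precisely what makes the two powers of Poisson kernels combine into $P(z,\overline{z};\cdot)^{m+1}P(u,\overline{u};\cdot)^{-m}$.
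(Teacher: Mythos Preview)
Your proof is correct and follows essentially the same route as the paper's: reduce to the diagonal $(z,\overline z)$, $(u,\overline u)$ via the two-variable identity principle, then use the change of variables $e^{it}=\psi_{u,\overline u}(e^{i\theta})$ together with \eqref{eq:MoebiusPoissonID}--\eqref{eq:PoissonkernelasMoebiustrafo} and Fourier orthogonality (the paper phrases this last step as Parseval). The only cosmetic difference is that you start from $P_0^{m+1}$ rather than $P_0^{-m}$, which produces the sum with the roles of $(z,w)$ and $(u,v)$ swapped and forces the extra appeal to \eqref{eq:Poissonplusminus} at the end; the paper's choice of $P_0^{-m}$ lands directly on the stated form without that final swap.
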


   \begin{proof}
				The way of reasoning is as follows: first, fix $z, u\in\D$ and let $v:=\overline{u}$. Then $\psi_{u,\cc{u}}\in\mathcal{M}(\D)$ which means, in particular, that $T_{u,\cc{u}}(z,\cc{z})\in\{(t,\cc{t})\,:\,t\in\D\}$. By the two variable identity principle, Lemma \ref{lem:RangeIDprincpiple}, if we can show \eqref{eq:MoebiusPullback} in this special case, then, keeping $u\in\D$ fixed, \eqref{eq:MoebiusPullback} also holds for points $(z,w)\in\Omega$ since both sides of the equation are holomorphic functions in $\Omega$. Next, fix $(z,w)\in\Omega$ and note that both sides of \eqref{eq:MoebiusPullback} are holomorphic as functions of $(u,v)\in\Omega \cap\C^2$. Assuming \eqref{eq:MoebiusPullback} for $v=\cc{u}\in\D$ then implies the claim, again by Lemma \ref{lem:RangeIDprincpiple}.

				It remains to show \eqref{eq:MoebiusPullback} for $w=\cc{z}$ and $v=\cc{u}$ with $z,u\in\D$. For this purpose we compute
    \begin{align*}
					\big(
					P_0^{m+1} \circ T_{u,\cc{u}}
					\big)(z,\cc{z})&\overset{\eqref{eq:Poissonplusminus}}{=}\big(
					P_0^{-m} \circ T_{u,\cc{u}}
					\big)(z,\cc{z})
					=
					\frac{1}{2\pi}
					\int\limits_{0}^{2\pi}
					P\left(\psi_{u,\cc{u}}(z),\frac{1}{\psi_{u,\cc{u}}\left(1/\cc{z}\right)};e^{it}\right)^{-m}\,dt\\
     &\overset{\eqref{eq:PoissonkernelasMoebiustrafo}}{=}
					\frac{1}{2\pi}
					\int\limits_{0}^{2\pi}
					P\big(z,\cc{z}; \psi_{u,\cc{u}}^{-1}(e^{it})\big)^{-m}
					P\big(u,\cc{u};\psi_{u,\cc{u}}^{-1}(e^{it})\big)^{m}\,dt\\
					&\overset{\psi_{u,\cc{u}}(e^{is})=e^{it}}{=}\frac{1}{2\pi}
					\int\limits_{0}^{2\pi}
					P\big(z,\cc{z}; e^{is}\big)^{-m}
					P\big(u,\cc{u};e^{is}\big)^{m+1}\,ds\,.
				\end{align*}
				In the last step we used \eqref{eq:MoebiusPoissonID}. Now we can interpret the resulting integral as the inner product on $L^2([0,2\pi],\C)$, the space of square integrable functions $f:[0,2\pi]\to\C$. Using Parseval's identity in (P) we obtain
				\begin{align*}
					\big(P_0^{-m} \circ T_{u,\cc{u}}\big)(z,\cc{z})&=\left\langle P\left(u,\cc{u};e^{is}\right)^{m+1}, \cc{P(z,\cc{z};e^{is})^{-m}}\right\rangle_{L^2([0,2\pi],\C)}\\
					&\overset{\mathrm{(P)}}{=}\sum_{j=-\infty}^{\infty}\left\langle	P\left(u,\cc{u};e^{it}\right)^{m+1},e^{-ijt}\right\rangle_{L^2([0,2\pi],\C)}
					\left\langle e^{-ijs},\cc{P(z,\cc{z};e^{is})^{-m}}\right\rangle_{L^2([0,2\pi],\C)}\\
					&=\sum_{j=-\infty}^{\infty}\frac{1}{(2\pi)^2}\int\limits_0^{2\pi}
					P\left(u,\cc{u};e^{it}\right)^{m+1}e^{ijt}\,dt
					\int\limits_0^{2\pi}P(z,\cc{z};e^{is})^{-m}e^{-ijs}\,ds\\
					&=\sum_{j=-m}^{m}P_{-j}^{m+1}(u,\cc{u})P_j^{-m}(z,\cc{z})
                    \, ,
				\end{align*}
				where the series reduces to a finite sum because $P_j^{-m}$ vanishes for $|j|>m$, see Remark~\ref{rem:PFMproperties}(b).
			\end{proof}

   The previous proof can be modified to establish the following result.

   \begin{theorem}\label{thm:Pullbackproposition}
				     Let $\mu\in\C$, $\Re\mu\geq1/2$, $n\in\N_0$ and $u\in\D$. Then
                \begin{subequations}\label{eq:invariance}
					\begin{align} \label{eq:Y2invariance}
						\big(
						P_{n}^{\mu} \circ T_{u,\cc{u}}
						\big)(z,w)&= \sum \limits_{j=-\infty}^{\infty} \left( \sum_{k=0}^n
						\sum_{\ell=0}^\infty
						(-1)^{k-n-\ell}
						\binom{n}{k}
						\binom{-n}{\ell}
						u^\ell
						\cc{u}^k
				P^{1-\mu}_{j-k+n+\ell}(u,\cc{u})\right)
						P^{\mu}_{-j}(z,w)\\ \label{eq:Y1inavariance}
						\big(
						P_{-n}^{\mu} \circ T_{u,\cc{u}}
						\big)(z,w)&=\sum\limits_{j=-\infty}^{\infty}\left(\sum_{k=0}^n
						\sum_{\ell=0}^\infty
						(-1)^{n-k+\ell}
						\binom{n}{k}
						\binom{-n}{\ell}
						u^k
						\cc{u}^{\ell}
						P^{1-\mu}_{j+k-n-\ell}(u,\cc{u})
						\right)P^{\mu}_{-j}(z,w) \,
					\end{align}
				\end{subequations}
    for all $(z,w) \in \D^2$.
                Moreover, both series converge locally uniformly and absolutely w.r.t.\ $(z,w)$ in $\D^2$.
			\end{theorem}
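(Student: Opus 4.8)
The plan is to mimic the structure of the proof of Proposition~\ref{prop:Pullbackproposition} but starting from the general PFM rather than the special case $n=0$. First I would reduce, exactly as in that proof, to the ``diagonal'' case $w=\cc{z}$, $z,u\in\D$: since both sides of \eqref{eq:Y2invariance} (resp.~\eqref{eq:Y1inavariance}) are holomorphic on $\D^2$ and the coefficients on the right are fixed numbers depending only on $\mu,n,u$, the two--variable identity principle (Lemma~\ref{lem:RangeIDprincpiple}) shows it suffices to prove the identity for points of the form $(z,\cc z)$ with $z\in\D$. On the diagonal, $T_{u,\cc u}$ maps $(z,\cc z)$ to $(\psi_{u,\cc u}(z),\cc{\psi_{u,\cc u}(z)})$ with $\psi_{u,\cc u}\in\mathcal{M}(\D)$, so by Remark~\ref{rem:whenPFMindeedisPoissonkernelpowerFouriermode} we may work with genuine Poisson kernels.

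Next I would carry out the same integral manipulation as before, but keeping track of the extra factor coming from the index $n$. Writing out the definition \eqref{eq:PoissonFourier}, the composed PFM is
\[
\big(P_n^{\mu}\circ T_{u,\cc u}\big)(z,\cc z)=\frac{1}{2\pi}\int_0^{2\pi} P\big(\psi_{u,\cc u}(z),\cc{\psi_{u,\cc u}(z)};e^{it}\big)^{\mu}\,\Big(\psi_{u,\cc u}(z)e^{-it}\Big)^{?}\cdots\,dt,
\]
so after applying \eqref{eq:PoissonkernelasMoebiustrafo} and the substitution $\psi_{u,\cc u}(e^{is})=e^{it}$ together with \eqref{eq:MoebiusPoissonID}, one obtains an integral of the form
\[
\frac{1}{2\pi}\int_0^{2\pi} P\big(z,\cc z;e^{is}\big)^{-\mu}\,P\big(u,\cc u;e^{is}\big)^{1-\mu}\,\big(\text{explicit factor involving }\psi_{u,\cc u}(e^{is})^{-n}\big)\,e^{-ins}\,ds .
\]
The new ingredient is to expand the factor $\psi_{u,\cc u}(e^{is})^{-n}=\left(\dfrac{u-e^{is}}{1-\cc u e^{is}}\right)^{-n}$ (or its conjugate, for the $P_{-n}^\mu$ case) as a product of two binomial series in $u$ and $\cc u$; this is precisely what produces the double sum $\sum_{k=0}^n\sum_{\ell=0}^\infty(-1)^{\dots}\binom nk\binom{-n}{\ell}u^\ell\cc u^{k}e^{i(\dots)s}$. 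Inserting this expansion, interchanging summation and integration (justified below), and recognizing each remaining integral $\frac1{2\pi}\int_0^{2\pi}P(z,\cc z;e^{is})^{-\mu}e^{-ij s}\,ds$ as $P_{-j}^{1-\mu}(z,\cc z)$ up to the relation \eqref{eq:Poissonplusminus} — and likewise for the $u$--integral — yields exactly the stated coefficient of $P^\mu_{-j}(z,w)$. The symmetry $P^\mu_n(z,w)=P^\mu_{-n}(w,z)$ from Remark~\ref{rem:PFMproperties}(c) then gives \eqref{eq:Y1inavariance} from \eqref{eq:Y2invariance} (or one redoes the computation with the conjugated factor).

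Finally I would address the convergence claim, which is both the payoff and the technically delicate part. Local uniform and absolute convergence of $\sum_j c_j P^\mu_{-j}(z,w)$ on $\D^2$ follows from the uniform estimate \eqref{eq:PMEstimate}, $|P^\mu_{-j}(z,w)|\le\gamma(|j|+1)^{\Re\mu-1}r^{|j|}$ for $|z|,|w|\le r$, once one shows the coefficients $c_j$ (the inner double sums) satisfy $\sum_j|c_j|\rho^{|j|}<\infty$ for some $\rho<1$; here one uses that $u\in\D$ is fixed, so $|u|,|\cc u|<1$, and that $P^{1-\mu}_{j-k+n+\ell}(u,\cc u)$ is itself bounded by $\gamma'(|j|+\dots)^{-\Re\mu}|u|^{|j|-k+n+\ell}$-type quantities via \eqref{eq:PMEstimate} applied with order $1-\mu$. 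Summing the geometric-type series in $k,\ell$ and then in $j$ gives the bound with any $\rho\in(|u|,1)$. The same estimate also retroactively justifies the interchange of $\sum_{k,\ell}$ with the integral in the derivation above, since the triple series is absolutely convergent on the relevant compact sets. The main obstacle I anticipate is purely bookkeeping: correctly tracking the sign conventions, the shift $j\mapsto j-k+n+\ell$ versus $j+k-n-\ell$, and the $\mu\leftrightarrow 1-\mu$ swap forced by \eqref{eq:Poissonplusminus}, so that the indices in the final double sum match \eqref{eq:invariance} exactly; the analytic content (reduction to the diagonal, Poisson-kernel transformation law, Parseval/Fourier-coefficient identification, geometric majorization) is essentially the same as in Proposition~\ref{prop:Pullbackproposition}.
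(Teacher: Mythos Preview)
Your outline is essentially the paper's own proof: restrict to the diagonal via Lemma~\ref{lem:RangeIDprincpiple}, substitute $e^{it}=\psi_{u,\bar u}(e^{is})$ using \eqref{eq:MoebiusPoissonID}--\eqref{eq:PoissonkernelasMoebiustrafo}, expand $\psi_{u,\bar u}(e^{is})^{-n}=(1-\bar u e^{is})^n(u-e^{is})^{-n}$ by the binomial theorem, and identify the $j$-sum via Parseval exactly as in Proposition~\ref{prop:Pullbackproposition}. One small slip: after the substitution the $z$-Poisson factor carries exponent $\mu$, not $-\mu$; the Jacobian $dt=P(u,\bar u;e^{is})\,ds$ from \eqref{eq:MoebiusPoissonID} contributes one extra factor of $P(u,\bar u;\cdot)$, turning $-\mu$ into $1-\mu$ on the $u$-side only.

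There is, however, one genuine gap in your convergence argument. You propose to bound $P^{1-\mu}_{j-k+n+\ell}(u,\bar u)$ by applying \eqref{eq:PMEstimate} with order $1-\mu$, but that estimate rests on Lemma~\ref{lem:2F1Asymptotic}, which requires the real part of the order to be strictly positive. Since $\Re(1-\mu)\le 1/2$ and is $\le 0$ whenever $\Re\mu\ge 1$, \eqref{eq:PMEstimate} is not available for $P^{1-\mu}$ in general. The paper avoids the issue with a cruder bound that works for every $\mu$: because $u\in\D$ is fixed and $P(u,\bar u;e^{it})>0$, the integral definition \eqref{eq:PoissonFourier} immediately gives
\[
\big|P^{1-\mu}_m(u,\bar u)\big|\;\le\;\frac{1}{2\pi}\int_0^{2\pi}\big|P(u,\bar u;e^{it})^{1-\mu}\big|\,dt\;=:\;M_u\;<\;\infty
\]
uniformly in $m\in\Z$. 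With this uniform constant the triple sum factors cleanly: $\sum_j\sup_K|P^\mu_{-j}|<\infty$ by \eqref{eq:PMEstimate} (here $\Re\mu\ge 1/2>0$, so the estimate is legitimate), and the remaining $k,\ell$-sums are absolutely convergent binomial series in $|u|<1$. This is both simpler than your proposed route and free of any restriction on $\Re\mu$.
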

			\begin{proof}
    Using \eqref{eq:MoebiusPoissonID} and \eqref{eq:PoissonkernelasMoebiustrafo} we compute
				\begin{align*}
					\big(
					P_n^{\mu} \circ T_{u,\cc{u}}
					\big)(z,\cc{z})
					&=\frac{1}{2\pi}
					\int\limits_0^{2\pi}
					P\big(z,\cc{z}; e^{is}\big)^{\mu}
					P\big(u,\cc{u};e^{is}\big)^{1-\mu}\psi_{u,\cc{u}}(e^{is})^{-n}\,ds\\
					&=
                    \nonumber
                    \frac{1}{2\pi }	\int\limits_0^{2\pi}
					P\big(z,\cc{z}; e^{is}\big)^{\mu}
					P\big(u,\cc{u};e^{is}\big)^{1-\mu}(1-\cc{u}e^{is})^n(u-e^{is})^{-n}\,ds\,.
				\end{align*}
				Note that taking complex powers is unproblematic since the appearing Poisson kernels are positive real quantities. Applying Lemma \ref{lem:RangeIDprincpiple} and, additionally, the generalized binomial theorem leads to
                \begin{align*}
					\big(
					P_n^{\mu} &\circ T_{u,\cc{u}}
					\big)(z,w)\nonumber\\
					&=
					\sum_{k=0}^n
					\sum_{\ell=0}^\infty
					\binom{n}{k}
					\binom{-n}{\ell}
					(-1)^{k-n+\ell}\cc{u}^ku^\ell
					\frac{1}{2\pi }
					\int\limits_0^{2\pi}
					P\big(z,w; e^{is}\big)^{\mu}
					P\big(u,\cc{u};e^{is}\big)^{1-\mu}e^{is(k-n-\ell)}\, ds\,.
				\end{align*}
                Interpreting the above integral as a $L^2([0,2\pi],\C)$ inner product as it was done in the proof of Proposition \ref{prop:Pullbackproposition} shows \eqref{eq:Y2invariance}. A similar computation yields \eqref{eq:Y1inavariance}.

                Let $K\subseteq\D^2$ compact. It remains to prove that
                \begin{align*}
				\sum_{k=0}^n\sum_{\ell=0}^\infty \left\vert\binom{n}{k}\binom{-n}{\ell}\overline{u}^ku^\ell\right\vert
				\sum_{j=-\infty}^{\infty}
						\left\vert P^{1-\mu}_{j-k+n+\ell}(u,\cc{u})\right\vert
						\sup_{(z,w)\in K}\left\vert P^{\mu}_{-j}(z,w)\right\vert<\infty\,.
			\end{align*}
   We have
   \begin{align*}
       \left\vert P^{1-\mu}_{j-k+n+\ell}(u,\cc{u})\right\vert\leq\frac{1}{2\pi}\int\limits_0^{2\pi}\left\vert P(u,\cc{u};e^{it})^{1-\mu}\right\vert\, dt:=M_u<\infty
   \end{align*}
   as $u\in\D$ is fixed. Since $K$ is compact, we find a non--negative real number $r<1$ such that $|z|,|w|\leq r$ for all $(z,w)\in K$. Hence, using \eqref{eq:PMEstimate} we can ensure that
   \begin{align*}
       \sum_{j=-\infty}^{\infty}\sup_{(z,w)\in K}\left\vert P^{\mu}_{-j}(z,w)\right\vert&\leq\sum_{j=-\infty}^{\infty}\gamma_r \cdot \left(|j|+1 \right)^{\Re\mu-1} \, r^{|j|}:=N_{r,\mu}<\infty\,,
   \end{align*}
   where $\gamma_r>0$ depends on $r$ (and $\mu$). Thus,
   \begin{align*}
				\sum_{k=0}^n\sum_{\ell=0}^\infty &\left\vert\binom{n}{k}\binom{-n}{\ell}\overline{u}^ku^\ell\right\vert
				\sum_{j=-\infty}^{\infty}
						\left\vert P^{1-\mu}_{j-k+n+\ell}(u,\cc{u})\right\vert
						\sup_{(z,w)\in K}\left\vert P^{\mu}_{-j}(z,w)\right\vert\\ &\leq\sum_{k=0}^n\sum_{\ell=0}^\infty \left\vert\binom{n}{k}\binom{-n}{\ell}\overline{u}^ku^\ell\right\vert
				\cdot M_u\cdot N_{r,\mu}<\infty
			\end{align*}
   since the binomial series over $k$ and $\ell$ converge absolutely.
		\end{proof}

        Let $m,n\in\N_0$, $n\leq m$. For $\mu=-m$ the $j$--series in \eqref{eq:invariance} terminate. Moreover in this case, when replacing $(u,\cc{u})$ by $(u,v)\in\D^2$ in \eqref{eq:invariance}, a similar argument as in the proof of Theorem \ref{thm:Pullbackproposition} shows that the series in \eqref{eq:invariance} are absolutely and locally uniformly convergent with respect to $(u,v)$ in $\D^2$, too.  Further, we know that $P_{\pm n}^{-m}\circ T_{u,v}\in\mathcal{H}(\Omega)$. Thus, both sides of \eqref{eq:Y2invariance} resp.\ \eqref{eq:Y1inavariance} (with $(u,\overline{u})$ replaced by $(u,v)$) define holomorphic functions w.r.t.\ $(u,v)\in\D^2$ which allows us to apply the two variable identity principle, Lemma \ref{lem:RangeIDprincpiple}. We obtain:
        \begin{corollary}\label{rem:Pullbackproposition}
            Let $m,n\in\N_0$, $n\leq m$, $(z,w)\in\Omega$ and $(u,v)\in\D^2$. Then
            \begin{subequations}
                \begin{align*}
      \big(
						P_{n}^{-m}&\circ T_{u,v}
						\big)(z,w)
						=\sum_{j=-m}^m \left( \sum_{k=0}^n
						\sum_{\ell=0}^\infty
						(-1)^{k-n-\ell}
						\binom{n}{k}
						\binom{-n}{\ell}
						u^\ell
						v^k
						P^{m+1}_{j-k+n+\ell}(u,v)
						\right) P^{-m}_{-j}(z,w)\\
      \big(
						P_{-n}^{-m}&\circ T_{u,v}
						\big)(z,w)
						=\sum\limits_{j=-m}^m \left( \sum_{k=0}^n
						\sum_{\ell=0}^\infty
						(-1)^{n-k+\ell}
						\binom{n}{k}
						\binom{-n}{\ell}
						u^k
						v^{\ell}
						P^{m+1}_{j+k-n-\ell}(u,\cc{u})
						\right)P^{-m}_{-j}(z,w)\, .
					\end{align*}
            \end{subequations}
    In particular, $P^{-m}_n \circ T_{u,v} \in \mathrm{span}\{P^{-m}_{j} \, : \, j=-m, \ldots, m\} \subseteq\mathcal{H}(\Omega)$.
        \end{corollary}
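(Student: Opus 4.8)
The plan is to repeat the argument underlying Theorem~\ref{thm:Pullbackproposition}, but with the order $\mu$ taken to be $-m$ rather than satisfying $\Re\mu\ge1/2$; the restriction on $\mu$ there was only used to control convergence of the resulting series in $j$, and that concern evaporates here because the series will terminate. Concretely, for $z,u\in\D$ one has, exactly as in the first lines of the proof of Theorem~\ref{thm:Pullbackproposition} (via \eqref{eq:MoebiusPoissonID}, \eqref{eq:PoissonkernelasMoebiustrafo}, and the fact that the Poisson kernels occurring are positive reals, so that arbitrary powers are unambiguous),
\[
  \big(P_n^{-m}\circ T_{u,\cc{u}}\big)(z,\cc{z})
  =\frac{1}{2\pi}\int_0^{2\pi}P(z,\cc{z};e^{is})^{-m}\,P(u,\cc{u};e^{is})^{m+1}\,(1-\cc{u}e^{is})^{n}(u-e^{is})^{-n}\,ds .
\]
Expanding $(1-\cc{u}e^{is})^{n}(u-e^{is})^{-n}$ by the (finite, resp.\ generalized) binomial theorem, interchanging sum and integral (the binomial series being absolutely convergent since $|u|<1$), interpreting the remaining integrals as inner products on $L^{2}([0,2\pi],\C)$ and applying Parseval's identity -- precisely as in the proof of Theorem~\ref{thm:Pullbackproposition} -- produces the right--hand side of \eqref{eq:Y2invariance} with $\mu$ replaced by $-m$: since $1-(-m)=m+1$, the ``$(u,\cc{u})$--coefficients'' are now the Fourier modes $P^{m+1}_{j-k+n+\ell}(u,\cc{u})$ of the $(m+1)$-st power of the Poisson kernel, while the ``$(z,\cc{z})$--factors'' are the modes $P^{-m}_{-j}(z,\cc{z})$ of its $(-m)$-th power. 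By Remark~\ref{rem:PFMproperties}(b) the latter vanish for $|j|>m$, so the $j$--series collapses to $\sum_{j=-m}^{m}$, which is the asserted identity for $(z,w)=(z,\cc{z})\in\D^{2}$ and $v=\cc{u}$; the formula for $P^{-m}_{-n}$ follows by the same computation, or from $P^{-m}_{n}(z,w)=P^{-m}_{-n}(w,z)$ (Remark~\ref{rem:PFMproperties}(c)).

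To promote this to all $(z,w)\in\Omega$ and $(u,v)\in\D^{2}$ I would argue by two applications of the identity principle. First, with $u\in\D$ fixed and $v=\cc{u}$, the left--hand side is holomorphic in $(z,w)$ on $\Omega$ because $P^{-m}_{n}\in\mathcal H(\Omega)$ (Theorem~\ref{thm:helgason}) and $T_{u,\cc{u}}\in\mathcal M$ is a biholomorphism of $\Omega$, while the right--hand side is a \emph{finite} linear combination of the $P^{-m}_{-j}\in\mathcal H(\Omega)$; agreeing on the open set $\D^{2}\subseteq\Omega$, they agree on all of $\Omega$. Second, fix $(z,w)\in\Omega$ and let $v$ be an independent variable replacing $\cc{u}$ in the coefficients: then $(u,v)\mapsto\big(P^{-m}_{n}\circ T_{u,v}\big)(z,w)$ is holomorphic on $\D^{2}$ (a composition of the holomorphic family $\{T_{u,v}\}_{(u,v)\in\D^{2}}$ of automorphisms of $\Omega$ with $P^{-m}_{n}\in\mathcal H(\Omega)$), and so is the right--hand side, since for each of the finitely many $j$ the double series $\sum_{k=0}^{n}\sum_{\ell=0}^{\infty}$ converges absolutely and locally uniformly on $\D^{2}$ -- by the estimate \eqref{eq:PMEstimate} applied to the Poisson Fourier modes $P^{m+1}_{p}(u,v)$ with $|u|,|v|\le r<1$, together with the absolute convergence of $\sum_{\ell}\binom{-n}{\ell}u^{\ell}$ resp.\ $\sum_{\ell}\binom{-n}{\ell}v^{\ell}$, exactly as in the closing step of the proof of Theorem~\ref{thm:Pullbackproposition}. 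Both holomorphic functions of $(u,v)$ coincide whenever $v=\cc{u}$, and $\D^{2}$ contains $(0,0)=(0,\cc{0})$, so Lemma~\ref{lem:RangeIDprincpiple} forces them to coincide on all of $\D^{2}$, which gives both displayed formulas.

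The ``in particular'' is then immediate: the first formula exhibits $P^{-m}_{n}\circ T_{u,v}$ as $\sum_{j=-m}^{m}\alpha_{j}(u,v)\,P^{-m}_{-j}$ with scalars $\alpha_{j}(u,v)\in\C$, hence it lies in $\mathrm{span}\{P^{-m}_{-m},\dots,P^{-m}_{m}\}$, and every $P^{-m}_{j}$ with $|j|\le m$ is in $\mathcal H(\Omega)$ by Theorem~\ref{thm:helgason}; likewise for $P^{-m}_{-n}\circ T_{u,v}$. I expect the only delicate points to be (a) confirming that the whole computation in the proof of Theorem~\ref{thm:Pullbackproposition} survives the formal substitution $\mu=-m$ -- which it does, because on the diagonal all Poisson kernels are positive and the $j$--series now terminates -- and (b) the local uniform convergence in $(u,v)$ that makes the right--hand sides holomorphic there; everything else is the bookkeeping already done for Theorem~\ref{thm:Pullbackproposition}.
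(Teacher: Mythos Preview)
Your proof is correct and follows essentially the same route as the paper: redo the computation from Theorem~\ref{thm:Pullbackproposition} with $\mu=-m$ (where the $j$--series terminates, so the convergence obstruction that forced $\Re\mu\ge 1/2$ disappears), then extend by the two--variable identity principle first in $(z,w)$ and then in $(u,v)$, after checking local uniform convergence of the coefficient series in $(u,v)$. One small slip: in your first extension step you write ``agreeing on the open set $\D^{2}\subseteq\Omega$'', but at that point you have only established agreement on the diagonal $\{(z,\cc{z}):z\in\D\}$; the passage to all of $\Omega$ is via Lemma~\ref{lem:RangeIDprincpiple}, not ordinary analytic continuation from an open subset.
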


\section{Proof of Theorems \ref{thm:2Intro} and \ref{thm:RudinIntro}:  invariant eigenspaces}\label{sec:PFandRudin}\

		Using the theory we have developed so far, we are now able to give an explicit description of the eigenspaces of the Laplace operator $\Delta_{zw}$ on $\D^2$ resp.\ on $\Omega$. This will allow us to conclude Theorems \ref{thm:2Intro} and \ref{thm:RudinIntro} afterwards.

		\begin{theorem}[M{\"o}bius invariant subspaces of $X_\lambda(\D^2)$]\label{thm:RudinthmforOmega}
			Let $\lambda\in\C$.
			\begin{itemize}
				\item[(NE)] If $\lambda\neq4m(m+1)$ for all $m\in\N_0$, then $X_\lambda(\D^2)$ has no non--trivial M{\"o}bius invariant subspaces.
				\item[(E)] If $\lambda=4m(m+1)$ for some $m\in\N_0$, then $X_\lambda(\D^2)$ has precisely three non--trivial M{\"o}bius invariant subspaces $X_{\lambda}^{+}(\D^2),X_{\lambda}^{-}(\D^2),X_{\lambda}^{0}(\D^2)$ of which exactly one, say $X_{\lambda}^{0}(\D^2)$, is finite dimensional of dimension $2m+1$. Explicitly, these spaces are given by
    \begin{subequations}\label{eq:Rudinexceptional}
					\begin{align}\label{eq:RudinPlus}
						X_{\lambda}^{+}(\D^2)&=\clos{\D^2} \left(\mathrm{span}\{P_n^{m+1} \, : \, -m\leq n< \infty\}\right)
                                   \,,\\
						\label{eq:RudinMinus}
						X_{\lambda}^{-}(\D^2)&=\clos{\D^2}\left(\mathrm{span}\{P_n^{m+1} \, : \, -\infty< n\leq m\}\right)
      \,,\\
						\label{eq:RudinFinite}
						X_{\lambda}^{0}(\D^2)&=\mathrm{span}\{P_n^{m+1} \, : \, -m\leq n\leq m\} \, ,
					\end{align}
				\end{subequations}
				where $\clos{\D^2}$ denotes the closure with respect to the topology of locally uniform convergence on $\D^2$.
			\end{itemize}
		\end{theorem}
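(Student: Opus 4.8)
Writing $\lambda=4\mu(\mu-1)$ with $\Re\mu\ge 1/2$ (so $\lambda$ is exceptional exactly when $\mu\in\N$, and then $\mu=m+1$), I would build everything on Theorem~\ref{thm:Helgason} and the precomposition formulas of Section~\ref{sec:Pullback}. The first step is to record that the rotations $\varrho_\gamma\in\mathcal M(\D^2)$ ($|\gamma|=1$), together with $P^\mu_n\circ\varrho_\gamma=\gamma^{-n}P^\mu_n$, yield
$$c_n(f)\,P^\mu_n=\frac{1}{2\pi}\int_0^{2\pi}e^{in\theta}\bigl(f\circ\varrho_{e^{i\theta}}\bigr)\,d\theta$$
for every $f=\sum_{n}c_n(f)P^\mu_n\in X_\lambda(\D^2)$; by the estimate~\eqref{eq:PMEstimate} the integral is a limit in $\mathcal H(\D^2)$ of convex combinations of the functions $f\circ\varrho_{e^{i\theta}}$, so each $c_n$ is a continuous functional and \emph{every closed Möbius invariant subspace $Y$ equals the closed span of the Poisson--Fourier modes it contains}. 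I would also note the refinement that, whenever $g=\sum_j d_j(u)P^\mu_j$ lies in $Y$ with coefficients depending on a parameter $u$, each $d_j(u)P^\mu_j$ lies in $Y$.

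\textbf{Propagation of one mode.} Given $P^\mu_{n_0}\in Y$, I would apply $T_{u,\overline{u}}\in\mathcal M(\D^2)$ and use Theorem~\ref{thm:Pullbackproposition} to write $P^\mu_{n_0}\circ T_{u,\overline{u}}=\sum_j c^{(n_0)}_j(u)P^\mu_{-j}$, where $c^{(n_0)}_j$ is an absolutely convergent double power series in $u,\overline{u}$ built from binomial coefficients and the values $P^{1-\mu}_\bullet(u,\overline{u})$; by the previous step, $P^\mu_{-j}\in Y$ as soon as $c^{(n_0)}_j\not\equiv0$. The crucial point is this non-vanishing. Here I would feed \eqref{eq:generalPoissonfouriermode} into $c^{(n_0)}_j$ --- so that $P^{1-\mu}_k(u,\overline{u})$ contributes its lowest term $(-1)^{|k|}\binom{\mu-1}{|k|}\overline{u}^{\,k}$ ($k\ge0$), resp.\ $(-1)^{|k|}\binom{\mu-1}{|k|}u^{|k|}$ ($k<0$) --- and show, via Vandermonde's identity, that the coefficient of the lowest-degree monomial of $c^{(n_0)}_j$ collapses to a single binomial coefficient $\pm\binom{\mu-1+n_0}{\,\cdot\,}$, which is non-zero throughout the relevant index range. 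In the non-exceptional case ($\mu\notin\N$) this gives $c^{(n_0)}_j\not\equiv0$ for all $j$ (in particular $P^\mu_0\in Y$, and then, since $c^{(0)}_j(u)=P^{1-\mu}_j(u,\overline{u})\not\equiv0$, all $P^\mu_k\in Y$), so $Y=X_\lambda(\D^2)$; this is alternative (NE) and already proves $X_\lambda(\D^2)$ has no non-trivial Möbius invariant subspace in that case. In the exceptional case $\mu=m+1$, since $1-\mu=-m$ forces $P^{-m}_k(u,\overline{u})\equiv0$ for $|k|>m$ (Remark~\ref{rem:PFMproperties}(b)), the index bookkeeping in~\eqref{eq:invariance} truncates the expansion so that $P^{m+1}_{n_0}\circ T_{u,\overline{u}}$ only involves modes $P^{m+1}_k$ with $k\ge-m$ if $n_0\ge-m$ (dually $k\le m$ if $n_0\le m$, and $|k|\le m$ if $|n_0|\le m$, the last via Corollary~\ref{rem:Pullbackproposition}), with the surviving coefficients still $\not\equiv0$.

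\textbf{Classification and the three spaces.} For case (E), I would take a non-trivial Möbius invariant $Y$, pick $P^{m+1}_{n_0}\in Y$, and argue: if $Y$ meets both $\{k>m\}$ and $\{k<-m\}$ then it contains all modes, so $Y=X_\lambda(\D^2)$; otherwise exactly one of the three remaining cases occurs and the propagation step identifies $Y$ with $X^+_\lambda(\D^2)=\clos{\D^2}\operatorname{span}\{P^{m+1}_k:k\ge-m\}$, or $X^-_\lambda(\D^2)$, or $X^0_\lambda(\D^2)=\operatorname{span}\{P^{m+1}_k:|k|\le m\}$ (which has dimension $2m+1$, the modes $P^{m+1}_k$, $|k|\le m$, being non-zero with pairwise distinct homogeneity degrees). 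It remains to check that these three are genuinely Möbius invariant, non-trivial and pairwise distinct. For $X^0_\lambda(\D^2)$: by Corollary~\ref{cor:EW}(i) it is the full $\Delta_{zw}$-eigenspace $X_\lambda(\Omega)$ viewed inside $X_\lambda(\D^2)$, hence $\mathcal M$-invariant (as $\Delta_{zw}$ is $\mathcal M$-invariant) and closed (finite-dimensional). For $X^\pm_\lambda(\D^2)$: closed by definition, and invariant under the generators $\varrho_\gamma$ and $T_{u,\overline{u}}$ by the truncated expansions of the propagation step. Distinctness and non-triviality follow since the continuous functionals $c_n$ separate $\{0\}\subsetneq X^0_\lambda(\D^2)\subsetneq X^\pm_\lambda(\D^2)\subsetneq X_\lambda(\D^2)$ (e.g.\ $c_n\equiv0$ on $X^+_\lambda(\D^2)$ for $n<-m$, while $c_{-m-1}(P^{m+1}_{-m-1})=1$); because every $P^\mu_n\in\mathcal H(\Omega_*)$ this last remark also shows the density statements (NE)/$(\mathrm E_\pm)$ cannot be upgraded to equalities. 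Finally (i)--(iii) drop out: $(\mathrm E_0),(\mathrm E_\pm)$ require $\mu=m+1\in\N$; $\dim Y<\infty$ forces, via the propagation step, that $Y$ contains no mode outside $\{|n|\le m\}$, hence $Y=X^0_\lambda(\D^2)$ with $\dim Y=2m+1$; and $X^\pm_\lambda(\D^2)$ are plainly infinite-dimensional.

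\textbf{The main obstacle.} The hard part will be the non-vanishing assertion in the propagation step: pinning down the lowest-order monomial of the double series $c^{(n_0)}_j(u)$, evaluating its coefficient by Vandermonde to a single binomial $\pm\binom{\mu-1+n_0}{\,\cdot\,}$, and controlling exactly which indices survive the $P^{-m}$-truncation in the exceptional case --- this is where all the sign and index accounting lives, and it is the analogue of the representation-theoretic core of Rudin's argument in~\cite{Rudin84}. A secondary point that also needs care is that $X^0_\lambda(\D^2)=X_\lambda(\Omega)$ admits no proper non-trivial Möbius invariant subspace; this can be obtained by the same propagation mechanism (using Corollary~\ref{rem:Pullbackproposition} and non-vanishing of enough coefficients) or, alternatively, read off from the irreducibility of the $(2m+1)$-dimensional $\operatorname{PSU}(1,1)$-representation discussed in the remark on representative functions in Section~\ref{sec:RudinsODE}.
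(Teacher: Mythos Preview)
Your overall architecture is sound and would lead to a proof, but your propagation step takes a substantially harder road than the paper does. The paper does \emph{not} use the full pullback formula of Theorem~\ref{thm:Pullbackproposition} to move from one Poisson--Fourier mode to another. Instead it introduces the first--order operators
\[
D^{+}f=\partial_z f-w^{2}\partial_w f,\qquad D^{-}f=\partial_w f-z^{2}\partial_z f,
\]
observes (via \eqref{eq:firstdualPMalternativedefinition}) that they arise as $\partial_u|_{u=0}$ resp.\ $\partial_v|_{v=0}$ of $f\circ T_{u,0}\circ\varrho_{-1}$ resp.\ $f\circ T_{0,v}\circ\varrho_{-1}$, so any closed M\"obius invariant $Y$ is automatically $D^{\pm}$--invariant, and then uses the single clean identity (Lemma~\ref{lem:PoissonFouriershiftbyDualPM})
\[
D^{+}P_n^{\mu}=(\mu-n-1)P_{n+1}^{\mu},\qquad D^{-}P_n^{\mu}=(\mu+n-1)P_{n-1}^{\mu}.
\]
Iterating gives $(D^{\pm})^{k}P_n^{\mu}=(\mu\mp n-k)_k\,P_{n\pm k}^{\mu}$, and the Pochhammer prefactors tell you \emph{at a glance} exactly when propagation stops: never if $\mu\notin\Z$ (hence (NE)), and at the barriers $n=m$ (upward) resp.\ $n=-m$ (downward) when $\mu=m+1$, which immediately yields the three candidate spaces in (E). This is the Lie--algebra (ladder--operator) version of your Lie--group argument, and it replaces your ``main obstacle'' --- the lowest--monomial/Vandermonde bookkeeping on the double series $c_j^{(n_0)}(u)$ --- by a one--line scalar factor. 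Your approach should go through (the identification of the lowest--degree coefficient with a single binomial is plausible and the index ranges you state match the truncation forced by $P^{-m}_k\equiv 0$ for $|k|>m$), but it is considerably heavier and the non--vanishing verification across all relevant $(n_0,j)$ would need to be written out carefully.

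For Step~2 (M\"obius invariance of $Y_0,Y_{\pm}$) the two approaches essentially coincide: both rely on Theorem~\ref{thm:Pullbackproposition}/Corollary~\ref{rem:Pullbackproposition}. One small correction: your remark that ``the density statements cannot be upgraded to equalities'' does not follow from the functionals $c_n$ alone; that part belongs to Theorem~\ref{thm:RudinIntro} and uses Corollary~\ref{lem:NoContinuation} (existence of eigenfunctions on $\D^2$ that do not extend). It is not part of the present statement.
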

		Note that Rudin gives an explicit characterization of the three non--trivial subspaces in the second case of his theorem (see Theorem \ref{thm:IntroRudin}), too. We will discuss the similarities and the differences between Rudin's and our approach in Remark~\ref{rem:RudintheoremOmega}.

	Three crucial ingredients that we will use for the proof of Theorem \ref{thm:RudinthmforOmega} are the representation of eigenfunctions in terms of PFM from Theorem~\ref{thm:Helgason}, the pullback formula for PFM from Theorem~\ref{thm:Pullbackproposition} and the differential operators $D^{+}$ and $D^{-}$ defined by
		\begin{equation}
        \label{eq:dualPM}
			D^+ f(z,w)
            \coloneqq
            \partial_z f(z,w)-w^2 \partial_w f(z,w) \, , \qquad
            D^- f(z,w)
            \coloneqq
            \partial_w f(z,w)-z^2 \partial_z f(z,w)
		\end{equation}
		for $f \in \mathcal{H}(D)$ and any subdomain $D \subseteq \C^2$. We note an alternative description of theses operators based on the automorphisms from \eqref{eq:MoebiusGenerators}.
        \begin{lemma}
            Let $D \subseteq \C^2$ be a subdomain,  $f \in \mathcal{H}(D)$. For all $(z,w) \in D$ it then holds that
            \begin{equation}
            \begin{split}
                \label{eq:firstdualPMalternativedefinition}
                D^{+} f(z,w)
                =
                \partial_u
                \left(
                    f
                    \circ
                    T_{u,0}
                    \circ {\varrho}_{-1}
                \right)
                \Big|_{u=0}
                (z,w) \, , \\
                D^{-} f(z,w)
                =
                \partial_v
                \left(
                    f
                    \circ
                    T_{0,v} \circ \varrho_{-1}
                \right)
                \Big|_{v=0}
                (z,w) \, .
            \end{split}
            \end{equation}
        \end{lemma}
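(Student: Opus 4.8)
The plan is to verify the two identities in \eqref{eq:firstdualPMalternativedefinition} by direct computation, exploiting the fact that both sides are holomorphic in $(z,w)$ so it suffices to differentiate the composed maps using the chain rule and evaluate at the relevant parameter value. Let me focus on the first identity; the second follows by the obvious symmetry exchanging the roles of the two variables (indeed $D^-f(z,w) = D^+(f\circ\mathcal F)(\mathcal F(z,w))$ up to bookkeeping, but a direct computation is just as quick).

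First I would compute the map $T_{u,0}\circ\varrho_{-1}$ explicitly. By \eqref{eq:MoebiusGenerators}, $\varrho_{-1}(z,w) = (-z,-w)$ and $T_{u,0}(a,b) = \bigl(\tfrac{u-a}{1-0\cdot a}, \tfrac{0-b}{1-ub}\bigr) = \bigl(u-a, \tfrac{-b}{1-ub}\bigr)$. Hence
\begin{equation*}
\bigl(T_{u,0}\circ\varrho_{-1}\bigr)(z,w) = \Bigl(u+z,\ \frac{w}{1+uw}\Bigr)\, .
\end{equation*}
So the composed function is $g_u(z,w) := f\bigl(u+z,\ \tfrac{w}{1+uw}\bigr)$, and I need $\partial_u g_u(z,w)\big|_{u=0}$.

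Next I would apply the chain rule. Writing $f = f(\zeta,\eta)$ with first and second partials $\partial_z f$, $\partial_w f$, the $u$-derivative of the first slot $\zeta = u+z$ is $1$, and the $u$-derivative of the second slot $\eta = \tfrac{w}{1+uw}$ is $\tfrac{-w^2}{(1+uw)^2}$. Therefore
\begin{equation*}
\partial_u g_u(z,w) = (\partial_z f)\Bigl(u+z,\tfrac{w}{1+uw}\Bigr)\cdot 1 + (\partial_w f)\Bigl(u+z,\tfrac{w}{1+uw}\Bigr)\cdot\frac{-w^2}{(1+uw)^2}\, .
\end{equation*}
Evaluating at $u=0$ gives $\partial_z f(z,w) - w^2\,\partial_w f(z,w) = D^+ f(z,w)$, which is exactly \eqref{eq:dualPM}. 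The computation for $D^-$ is identical after swapping variables: $T_{0,v}\circ\varrho_{-1}$ maps $(z,w)\mapsto\bigl(\tfrac{z}{1+vz}, v+w\bigr)$, and differentiating in $v$ at $v=0$ yields $-z^2\partial_z f(z,w) + \partial_w f(z,w) = D^- f(z,w)$.

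Since the formula only involves evaluating $f$ and its first partials at points of the form $(u+z, w/(1+uw))$ which lie in $D$ for $u$ in a neighborhood of $0$ (because $D$ is open and $(z,w)\in D$), the expression $\partial_u(f\circ T_{u,0}\circ\varrho_{-1})\big|_{u=0}$ is well-defined for every $(z,w)\in D$, and holomorphy in $(z,w)$ is automatic. There is essentially no obstacle here; the only mild care needed is to confirm that the parametrized family $T_{u,0}\circ\varrho_{-1}$ maps a neighborhood of the fixed point into $D$ so that the $u$-derivative makes sense — this is immediate from openness of $D$ and continuity of $(u,z,w)\mapsto(u+z, w/(1+uw))$ near $u=0$. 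I would present the two explicit forms of the composed automorphisms and the one-line chain-rule computation, and remark that the second identity follows by the symmetric computation.
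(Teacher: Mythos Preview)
Your proof is correct and is exactly the direct chain--rule verification the paper has in mind; indeed the paper states this lemma without proof, treating it as an immediate computation from the explicit forms of $T_{u,0}$, $T_{0,v}$ and $\varrho_{-1}$ in \eqref{eq:MoebiusGenerators}. Your explicit formulas $(T_{u,0}\circ\varrho_{-1})(z,w)=(u+z,\,w/(1+uw))$ and $(T_{0,v}\circ\varrho_{-1})(z,w)=(z/(1+vz),\,v+w)$ and the subsequent evaluation at $u=0$ resp.\ $v=0$ are precisely what is needed.
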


        What makes these operators useful is that they act by shifts of the Fourier index on the PFM.
		\begin{lemma}\label{lem:PoissonFouriershiftbyDualPM}
			The operators $D^+$ and $D^-$ commute with $\Delta_{zw}$. Furthermore, for $\mu\in\C$ and $n\in\Z$ it holds that
			\begin{equation*}
				\label{eq:DualPMShift}
				D^+
                P_n^{\mu}
                =
                (\mu-n-1)
                P_{n+1}^{\mu}
                \qquad\text{and}\qquad
                D^-
                P_n^{\mu}
                =
                (\mu+n+1)
                P_{n-1}^{\mu} \, .
			\end{equation*}
		\end{lemma}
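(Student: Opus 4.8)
The plan is to establish the two assertions separately. For the commutation $D^{\pm}\Delta_{zw}=\Delta_{zw}D^{\pm}$, the cleanest route is the alternative description \eqref{eq:firstdualPMalternativedefinition}: since $T_{u,0}\circ\varrho_{-1}\in\mathcal M(\D^2)$ for every $u\in\D$, the invariance property \eqref{eq:InvOmega} gives $\Delta_{zw}\bigl(f\circ T_{u,0}\circ\varrho_{-1}\bigr)=(\Delta_{zw}f)\circ T_{u,0}\circ\varrho_{-1}$; differentiating in $u$ and evaluating at $u=0$ (which is legitimate because everything is holomorphic in $u$, so $\partial_u$ commutes with the constant--coefficient-in-$(z,w)$ limit and with $\Delta_{zw}$) yields $D^{+}\Delta_{zw}f=\Delta_{zw}D^{+}f$, and symmetrically for $D^{-}$. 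Alternatively one can just verify the commutation by a direct but short computation with $\Delta_{zw}=4(1-zw)^2\partial_z\partial_w$ and the explicit formulas \eqref{eq:dualPM}; I would keep this as a fallback but prefer the conceptual argument.

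For the shift formulas, I would first reduce to the diagonal. Both sides of $D^{+}P_n^{\mu}=(\mu-n-1)P_{n+1}^{\mu}$ are holomorphic on $\Omega_*\supseteq\D^2$ (the PFM extend there by \eqref{eq:PMvsF}, and $D^{+}$ preserves holomorphy), so by the two--variable identity principle (Lemma \ref{lem:RangeIDprincpiple}) it suffices to check the identity at points $(z,\overline z)$ with $z\in\D$. There I would use the integral representation \eqref{eq:PoissonFourier}: writing $P_n^{\mu}(z,w)=\frac1{2\pi}\int_0^{2\pi}(1-zw)^{\mu}(1-ze^{-it})^{-\mu}(1-we^{it})^{-\mu}e^{-int}\,dt$, apply $D^{+}=\partial_z-w^2\partial_w$ under the integral sign. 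The $z$-- and $w$--derivatives of the integrand are elementary; the key algebraic fact to exploit is that
$$\partial_z\Bigl[(1-zw)^{\mu}(1-ze^{-it})^{-\mu}\Bigr]-w^2\partial_w\Bigl[(1-zw)^{\mu}(1-we^{it})^{-\mu}\Bigr]$$
collapses, after factoring out the common $(1-zw)^{\mu-1}$ and the remaining Poisson factors, into a multiple of $\mu(e^{-it}-w)(1-ze^{-it})^{-1}$ type terms, i.e. into something proportional to $e^{-it}$ times the integrand of $P_{n+1}^{\mu}$ plus a total $t$--derivative. More precisely, I expect the identity $D^{+}[\text{integrand of }P_n^{\mu}] = \mu e^{-it}[\text{integrand of }P_{n+1}^{\mu}] + \frac{1}{i}\partial_t(\cdots)$, where the boundary term integrates to zero over the circle; collecting the $e^{-it}$ shift in the Fourier index and the residual $-(n+1)$ from integrating the total derivative by parts produces the coefficient $\mu-n-1$.

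The statement for $D^{-}$ then follows from the symmetry $P_n^{\mu}(z,w)=P_{-n}^{\mu}(w,z)$ of Remark \ref{rem:PFMproperties}(c) together with the fact that swapping $(z,w)$ interchanges $D^{+}$ and $D^{-}$: indeed $D^{+}f(z,w)=(\partial_z-w^2\partial_w)f$ becomes, under $(z,w)\mapsto(w,z)$, exactly $(\partial_w-z^2\partial_z)$ applied to the swapped function, which is $D^{-}$; hence $D^{-}P_n^{\mu}(z,w)=(D^{+}P_{-n}^{\mu})(w,z)=(\mu-(-n)-1)P_{-n+1}^{\mu}(w,z)=(\mu+n-1+\text{?})P_{n-1}^{\mu}(z,w)$ — I need to be careful with the index bookkeeping here, but it should land on $(\mu+n+1)P_{n-1}^{\mu}$ after correctly tracking that $P_{-n+1}^{\mu}(w,z)=P_{n-1}^{\mu}(z,w)$. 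The main obstacle I anticipate is purely computational: getting the telescoping in the integrand exactly right so that the spurious terms assemble into a genuine $t$--derivative with vanishing periodic boundary contribution, and matching the constants. Everything else is routine, and the reduction to the diagonal via Lemma \ref{lem:RangeIDprincpiple} removes any worry about the several--variables aspect.
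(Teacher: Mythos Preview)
Your argument for the commutation of $D^{\pm}$ with $\Delta_{zw}$ is essentially the paper's: both invoke the description \eqref{eq:firstdualPMalternativedefinition} together with the $\mathcal{M}$--invariance of $\Delta_{zw}$ and then commute partial derivatives.

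For the shift formulas your route is genuinely different. The paper argues structurally: since $D^{+}$ commutes with $\Delta_{zw}$ and lowers the degree of homogeneity \eqref{eq:3.1} by one (immediate from \eqref{eq:dualPM}), the function $D^{+}P_{n}^{\mu}$ is a $-(n+1)$--homogeneous eigenfunction for the same eigenvalue, so Theorem~\ref{thm:helgason} forces $D^{+}P_{n}^{\mu}=c\,P_{n+1}^{\mu}$ for some constant $c$, which is then pinned down by a single evaluation at $(z,w)=(0,1)$ using \eqref{eq:generalPoissonfouriermode}. Your plan instead differentiates directly under the integral \eqref{eq:PoissonFourier}; carrying it out gives the clean identity $D^{+}I=\mu e^{-it}I+ie^{-it}\partial_{t}I$ for the integrand $I$, and multiplying by $e^{-int}$ and integrating by parts over $[0,2\pi]$ yields the coefficient $\mu-(n+1)$. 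This works and avoids the appeal to the uniqueness theorem, at the cost of a heavier computation; the paper's argument is shorter because it leverages the classification already established in Section~\ref{sec:RudinsODE}. Your preliminary reduction to the diagonal via Lemma~\ref{lem:RangeIDprincpiple} is harmless but unnecessary, since \eqref{eq:PoissonFourier} is already valid on all of $\D^{2}$.

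Your symmetry argument for $D^{-}$ is correct, and your hesitation about the bookkeeping is well founded: following it through gives $D^{-}P_{n}^{\mu}=(\mu+n-1)P_{n-1}^{\mu}$, not $(\mu+n+1)P_{n-1}^{\mu}$ as printed. That $(\mu+n-1)$ is the intended constant is confirmed by the application in the proof of Theorem~\ref{thm:RudinthmforOmega}, Step~1(iii), where iterating $D^{-}$ produces the prefactor $(\mu+n-k)_{k}$, consistent only with $\mu+n-1$.
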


		\begin{proof}
			The operator $D^+$ commutes with $\Delta_{zw}$ in view of the latter's $\mathcal{M}$--invariance, \eqref{eq:firstdualPMalternativedefinition} and commutativity of partial derivatives. Moreover, $D^+$ lowers homogeneity in the sense of \eqref{eq:3.1} by one degree. Since $P_n^{\mu}$ is a $(-n)$--homogeneous eigenfunction of $\Delta_{zw}$ to the eigenvalue ${4\mu(\mu-1)}$, this implies that $D^+ P_n^{\mu}$ is a ${-(n+1)}$--homogeneous eigenfunction of $\Delta_{zw}$ to the same eigenvalue. Hence, Theorem~\ref{thm:helgason} yields ${D^+ P_n^{\mu} = c P_{n+1}^{\mu}}$ for some ${c \in \C}$. Applying \eqref{eq:dualPM} and evaluating \eqref{eq:generalPoissonfouriermode} in $(z,w)=(0,1)$ yields $c = \mu - n - 1$. Note that one has to distinguish the cases $n \ge 0$ and $n < 0$. The second equality in \eqref{eq:DualPMShift} may be derived analogously.
		\end{proof}
        \begin{remark}
            One may thus regard the operators $D^+$ and $D^-$ as ladder operators and together with their commutator
        \begin{equation*}
            \frac{1}{2}
            \big(
                D^- \circ D^+
                -
                D^+ \circ D^-
            \big)
            =
            z \partial_z
            -
            w \partial_w
            \, ,
        \end{equation*}
        which is the Euler vector field corresponding to the homogeneity \eqref{eq:3.1}, they generate the Lie algebra of $\mathcal{M}$. By \eqref{eq:firstdualPMalternativedefinition}, all three operators may moreover be regarded as fundamental vector fields of the natural action of $\mathcal{M}$ as automorphisms of $\Omega$.
        \end{remark}

		\begin{proof}[Proof of Theorem \ref{thm:RudinthmforOmega}]\
  Write $\lambda=4\mu(\mu-1)$ for $\mu\in\C$, $\Re\mu\geq1/2$, and let $Y$ be a non--trivial M{\"o}bius invariant subspace of $X_\lambda(\D^2)$. The way of reasoning is as follows: we first show that $Y$ contains a PFM $P_n^{\mu}$ for some $n\in\Z$. If the assumption of (NE) holds, then this will imply that $P_n^{\mu}\in Y$ for all $n\in\Z$. Conversely, if the assumption of (E) holds for $m\in\N_0$, the existence of $P_n^{-m}$ for some $n\in\Z$ will imply that certain other PFM need to be contained in $Y$, too. Here, we will need to distinguish three cases which will lead to the three specific spaces described on the right--hand side of \eqref{eq:Rudinexceptional}. This shows that there are at most three possible choices for $Y$. Then, in a second step, we show that the three spaces found in the first step are in fact M{\"o}bius invariant, which then proves the existence of exactly three non--trivial proper M{\"o}bius invariant subspaces.
			\begin{mylist}
				\item[Step 1:]
				\begin{itemize}
					\item[(i)] Using \eqref{eq:dualPM} it follows from $Y$ being closed and M{\"o}bius invariant that the differential quotients $D^+ f, D^- f\in Y$. Therefore, $Y$ is invariant with respect to $D^+$ and $D^-$, and, by iteration, $(D^+)^kf,(D^-)^kf\in Y$ for every  $k\in\N$.

					\item[(ii)] Since $Y$ is M{\"o}bius invariant, $Y$ is, in particular, rotationally invariant. Thus, for every $n\in\Z$, we have $f_n\in Y$ where $f_n$ is defined by
					$$ f_n(z,w):=\frac{1}{2\pi i} \int \limits_{\partial \D} f \left( \eta z,\frac{w}{\eta} \right) \eta^{-(n+1)} \, d\eta \, .$$
					By Theorem \ref{thm:helgason}, $f_n$ is a multiple of the PFM $P_{n}^{\mu}$. Moreover, by Theorem \ref{thm:Helgason}, there are $c_n \in \C$ for all $n \in \Z$ depending on $f$ such that $f$ has the representation
					$$f=\sum_{n=-\infty}^\infty c_nP_n^{\mu} \, .$$
					Since $Y\neq\{0\}$, we may assume $f\not\equiv0$, i.e. $c_n \neq 0$ for some $n \in \Z$. Thus, the above observations imply $P_n^{\mu}\in Y$ for some $n\in\Z$.

					\item[(iii)] By Part (ii) there exists $P_n^{\mu}\in Y$ for some $n \in \Z$, and by Part (i) we conclude ${(D^+)^kP_n^{\mu},(D^-)^kP_n^{\mu}\in Y}$ for every $k\in\N$. Lemma \ref{lem:PoissonFouriershiftbyDualPM} shows that these functions are multiples of PFM again, so ${(\mu -n-k)_k P_{n+k}^{\mu }}$, $ (\mu +n-k)_k P_{n-k}^{\mu}\in Y$ for every $k\in\N$. Recall that the prefactors denote (rising) Pochhammer symbols, see \eqref{eq:Pochhammer}.

					For $\mu\not\in\Z$ the factors $(\mu \pm n-k)_k$ never vanish. In this case, $P_n^{\mu}\in Y$ for every $n\in\Z$. Using Theorem \ref{thm:Helgason} again, this implies $Y=X_\lambda(\D^2)$ which proves (NE).

					For $\mu \in\N$ the factors $(\mu \pm n-k)_k$ vanish for appropriate $k \in \N$. This leads to the dichotomy in case (E): assume $\mu = m+1\in\N$, and $P_n^{m+1}\in Y$. Then necessarily $P_{k}^{m+1}\in Y$ for $-m\leq k\leq m$. Since shifting $P_{k}^{m+1}$ with $D^\pm$ eventually produces the zero function, $P_{q}^{m+1}$, $q<-m$ or $q>m$, does not necessarily need to be contained in $Y$. However, if $P_{q}^{m+1}\in Y$ for some $q<-m$, then necessarily all $P_{p}^{m+1}$, $-\infty<p\leq m$, need to be contained in $Y$ since $P_{q}^{m+1}$ can be shifted to each of these function without producing the zero function. The same argument works for $q>m$. In summary, these considerations show that every non--trivial closed M{\"o}bius invariant subspace of $X_{\lambda}(\D^2)$ contains one of the three spaces
                    \begin{align*}
                         Y_{+}&:=\clos{\D^2}\left(\mathrm{span}\{P_n^{m+1} \, : \, -m\leq n< \infty\}\right)
                         \,,\\
                         Y_{-}&:=\clos{\D^2}\left(\mathrm{span}\{P_n^{m+1} \, : \, -\infty< n\leq m\}\right)
                         \, \text{ and}\\
                        Y_{0}\,&:=\mathrm{span}\{P_n^{m+1} \, : \, -m\leq n\leq m\}\,.
                     \end{align*}
                    \end{itemize}
				\item[Step 2:] It remains to show that $ Y_{+},Y_-$ and $Y_0$ are in fact M{\"o}bius invariant. Let $\mu-1=m\in\N_0$, $n\in\Z$ and $z,w,u\in\D$.
				\begin{itemize}
					\item[(iv)] Assume $n\geq0$. Theorem \ref{thm:Pullbackproposition} shows that $(P_{n}^{m+1} \circ T_{u,\cc{u}})(z,w)$ is a series of scalar multiples of $P_{k}^{m+1}(z,w)$ with $-m\leq k<\infty$. To see this recall that by Remark \ref{rem:PFMproperties} we have $P_n^{-m}\equiv0$ if $|n|>m$. Moreover, Theorem \ref{thm:Pullbackproposition} also implies that this series converges locally uniformly with respect to $(z,w)$ on $\D^2$. Thus, $P_{n}^{m+1} \circ T_{u,\cc{u}} \in Y_{+}$. The same argument applies to $P_{-n}^{m+1} \circ T_{u,\cc{u}}$.

                    \item[(v)] Part (iv) implies that every linear combination
					$$\sum_{n=-m}^N c_n P_n^{m+1}\circ T_{u,\cc{u}}  \, $$
     belongs to $Y_{+}$. Since, by definition, every element in $Y_{+}$ is a series of the form $\sum_{n=-m}^{\infty} c_n P^{m+1}_n$, M{\"o}bius invariance of $Y_{+}$ follows. The same argument applies to~$Y_{-}$.

					\item[(vi)] Now, consider $-m\leq n\leq m$. In this case $P_{ n}^{m+1}$ is a multiple of $P_{n}^{-m}$ by \eqref{eq:Poissonplusminus}, and Corollary \ref{rem:Pullbackproposition} shows that $(P_{n}^{m+1}\circ T_{u,\cc{u}})(z,w)$ can be expressed as linear combination of $P_{k}^{-m}(z,w)$  with $-m\leq k\leq m$. This shows M{\"o}bius invariance of $Y_{0}$.\qedhere
				\end{itemize}
			\end{mylist}
		\end{proof}

  \begin{remark}\label{rem:RudintheoremOmega}
			\begin{itemize}
				\item[(a)] In his proof of Theorem \ref{thm:IntroRudin} Rudin employs exclusively a specific   differential operator $A$, see \cite[Formula (5), p.~143]{Rudin84}, which in our terminology can be defined by
				$$ A:=D^+ + D^-\,.$$
				The operator ${A^{\sharp}:=i \left(D^+ - D^- \right)}$ is  ``conjugate'' to $A$ in the sense that
				$$ A-i A^{\sharp}=2D^+ \quad \text{ and } \quad  A+i A^{\sharp}=2 D^- \, .$$
				Note the analogy with the Wirtinger derivatives $\partial_z$ and $\partial_{\bar{z}}$ which satisfy
				$$ \partial_x -i \partial_y=2 \partial_z \quad \text{ and } \quad \partial_x+i \partial_y=2 \partial_{\bar{z}} \, . $$
    We note that in a related, but different  context, namely in \cite{Rudin83}, Rudin also considers the operators $D^+$ and $D^-$. He denotes them by $Q$ and $\overline{Q}$, see \cite[Section 4.1, Formula (1)]{Rudin83}.

				\item[(b)] Step 1 of our proof closely follows the argumentation of Rudin's proof (Steps 1--3 in his labelling). However, because of the usage of his $A$--operator (see Part (a)), which does not respect homogeneity when applied to a PFM, Rudin additionally needs  to consider the projections on $n$--homogeneous components in his proof. In our approach based on the $D^{\pm}$--operators this issue does not arise, since these operators lower resp.\ raise the degree of homogeneity by exactly one.

				\item[(c)] Step 2 of our proof, where we made heavy use of Proposition \ref{prop:Pullbackproposition} resp.\ Theorem \ref{thm:Pullbackproposition}, is different to Rudin's approach (Step 4 in his labeling). In short, Rudin makes a series expansion in the disk automorphism parameter and uses again that M{\"o}bius invariant subspaces are invariant under repeated application of $A$ (see Part (a)).

				\item[(d)] The proof of Theorem \ref{thm:RudinthmforOmega} heavily relies on the PFM. In fact, the usage of $n$--homogeneous functions is also crucial in the original proof of Theorem \ref{thm:IntroRudin} in \cite{Rudin84}, although Rudin does not explicitly define these functions but works with projections on the subspaces of $n$--homogeneous eigenfunctions.
			\end{itemize}
		\end{remark}

		We are now in a position to prove Theorems \ref{thm:RudinIntro} and \ref{thm:2Intro}.

		\begin{proof}[Proof of Theorem~\ref{thm:RudinIntro}]
			Let $\lambda \in \C$ and $Y \subseteq \mathcal{H}(\D^2)$ be a non--trivial M{\"o}bius invariant subspace of $X_\lambda(\D^2)$. Let first $Y = X_\lambda(\D^2)$ be the full eigenspace. Every PFM is holomorphic at least on $\Omega_*$.  By rotational invariance of $\D^2$ and Theorem~\ref{thm:Helgason} this implies that $Y \cap \mathcal{H}(\Omega_*)$ is dense in $Y$, i.e. (NE) holds.  Assume now that $Y$ is a proper subspace of $X_\lambda(\D^2)$. By Theorem~\ref{thm:RudinthmforOmega}, this is only possible if $\lambda$ is an exceptional eigenvalue. Furthermore,
			$$ Y
			\in
			\big\{
			X_{\lambda}^{+}(\D^2),
			X_{\lambda}^{-}(\D^2),X_{\lambda}^{0}(\D^2)
			\big\}
			\, .$$
			Combining \eqref{eq:RudinPlus}, \eqref{eq:RudinMinus}, resp. \eqref{eq:RudinFinite} with Corollary \ref{cor:EW} yields ($\text{E}_+$\,), ($\text{E}_-$\,) resp. ($\text{E}_0$\,). We have already discussed the additional statements in the latter case in Theorem~\ref{thm:RudinthmforOmega}.

			Finally, Corollary \ref{lem:NoContinuation} shows that none of the density statements ($\text{E}_+$\,), ($\text{E}_-$\,) and (NE) may be improved to holomorphic extensibility  of every element.
		\end{proof}

        \begin{proof}[Proof of Theorem~\ref{thm:2Intro}]
            The equivalence of (i), (ii) and (iii) was already established in Theorem~\ref{thm:SpherevsBiPlane}. Let $\lambda = 4m(m+1)$ for some $m \in \N_0$. By \eqref{eq:RudinFinite} we then have $\dim(X_\lambda(\Omega)) = 2m+1$. Investing moreover Theorem~\ref{thm:helgason} yields (a). Remark~\ref{rmk:Laplacian} proves the inclusion ``$\subseteq$'' in \eqref{eq:SphericalEigenfunctions}. Conversely, restricting an eigenfunction $F \in X_\lambda(\Omega)$ to the rotated diagonal $\{(z,-\cc{z}) \colon z \in \widehat{\C}\}$ yields an eigenfunction of $\Delta_{\widehat{\C}}$. This completes the proof.
        \end{proof}

        The finite--dimensional spaces $X_{4m(m+1)}(\Omega)$ can also be characterized in terms of the PFM $P_0^{-m}$ only.
        \begin{corollary}\label{cor:cyclicPoissonFouriermode}
			Let $m\in\N_0$. The function $P_0^{-m}$ is cyclic in $X_{4m(m+1)}(\Omega)$ with respect to the natural action of $\mathcal{M}$ by pullbacks. That is,
			\begin{equation}\label{eq:cyclicPoissonFouriermode}
                X_{4m(m+1)}(\Omega)
                =
                \operatorname{span}
				\big\{
				P_0^{-m} \circ T
				\;\big|\;
				T \in \mathcal{M}
				\big\}
                \, .
			\end{equation}
    In addition,
    \begin{equation}\label{eq:cyclicPoissonFouriermodehypandsphere}
                X_{4m(m+1)}(\Omega) =
                \operatorname{span}
				\big\{
				P_0^{-m} \circ T_{u,\cc{u}}
				\;\big|\;
				u \in \D
				\big\}
                =
                \operatorname{span}
				\big\{
				P_0^{-m} \circ T_{u,-\cc{u}}
				\;\big|\;
				u \in \C
				\big\}
                 \, .
			\end{equation}
		\end{corollary}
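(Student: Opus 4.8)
The plan is to combine three facts already established: that $X_{4m(m+1)}(\Omega) = \mathrm{span}\{P_n^{-m} : -m \le n \le m\}$ (Corollary \ref{cor:EW}(i) together with \eqref{eq:Poissonplusminus}, since $P_n^{m+1}$ is a nonzero multiple of $P_n^{-m}$ for $|n|\le m$); that $P_n^{-m}\circ T \in \mathrm{span}\{P_j^{-m} : -m\le j\le m\}\subseteq \mathcal{H}(\Omega)$ for every $T\in\mathcal{M}$ and every $|n|\le m$ (this follows from Corollary \ref{rem:Pullbackproposition} for $T=T_{u,v}$, from the $(-n)$-homogeneity of the PFM for $T=\varrho_\gamma$, from the flip identity $P_n^{-m}\circ\mathcal{F}=(-1)^m P_n^{-m}$, and hence via Lemma \ref{lem:MoebiusGenerators} for an arbitrary $T\in\mathcal{M}$); and that $D^+$, $D^-$ act as ladder operators $D^+ P_n^{-m} = (-m-n-1)P_{n+1}^{-m}$, $D^- P_n^{-m} = (-m+n+1)P_{n-1}^{-m}$ by Lemma \ref{lem:PoissonFouriershiftbyDualPM}.

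For the inclusion ``$\subseteq$'' in \eqref{eq:cyclicPoissonFouriermode} and both equalities in \eqref{eq:cyclicPoissonFouriermodehypandsphere}, I would argue as follows. The right-hand span of \eqref{eq:cyclicPoissonFouriermode} is contained in $X_{4m(m+1)}(\Omega)$ by the second fact above, and it is manifestly $\mathcal{M}$-invariant and closed under $D^+,D^-$ (indeed, $D^\pm(P_0^{-m}\circ T) = \partial_u(P_0^{-m}\circ T\circ T_{u,0}\circ\varrho_{-1})|_{u=0}$ by \eqref{eq:firstdualPMalternativedefinition}, and $T\circ T_{u,0}\circ\varrho_{-1}\in\mathcal{M}$, so the derivative lies in the span; strictly one should check the differentiated family stays in the finite-dimensional space $X_{4m(m+1)}(\Omega)$, where differentiation in a parameter of a curve preserves the subspace). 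Starting from $P_0^{-m}$ and applying $(D^+)^k$ for $1\le k\le m$ produces nonzero multiples of $P_1^{-m},\dots,P_m^{-m}$ (the Pochhammer prefactors $(-m-1)_k,\dots$ do not vanish for $k\le m$), and similarly $(D^-)^k$ yields $P_{-1}^{-m},\dots,P_{-m}^{-m}$. Hence the span contains all $P_n^{-m}$ with $|n|\le m$, which is all of $X_{4m(m+1)}(\Omega)$; combined with the reverse inclusion this gives \eqref{eq:cyclicPoissonFouriermode}. The same argument works verbatim with $\mathcal{M}$ replaced by $\{T_{u,\cc u} : u\in\D\}$ or $\{T_{u,-\cc u} : u\in\C\}$ in the role of the generating curves, because the needed ladder derivatives $D^\pm$ are recovered from the curves $u\mapsto T_{u,0}\circ\varrho_{-1}$ and $v\mapsto T_{0,v}\circ\varrho_{-1}$ via \eqref{eq:firstdualPMalternativedefinition}, and these are obtained as limits/compositions within the respective families; more directly, Corollary \ref{rem:Pullbackproposition} expresses $P_n^{-m}\circ T_{u,v}$ explicitly, and specializing $v=\cc u$ (resp.\ noting the spherical analogue) one reads off that the coefficient functions of $u$ span enough of the $P_j^{-m}$ to recover the whole space — concretely, differentiating $P_0^{-m}\circ T_{u,\cc u}$ in $u$ and $\cc u$ at $u=0$ already produces $P_{\pm1}^{-m}$, and iterating gives all $P_j^{-m}$.

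The main obstacle is the verification that $\{P_0^{-m}\circ T_{u,\cc u} : u\in\D\}$ genuinely generates the full $(2m+1)$-dimensional space rather than a proper subspace: one must be sure that the real-analytic (not holomorphic) dependence on $u$ still yields, through repeated $\partial_u$ and $\partial_{\cc u}$ derivatives at $u=0$, all the ladder shifts $P_{\pm k}^{-m}$. This is handled cleanly by noting that $\partial_u|_{u=0}(f\circ T_{u,0}\circ\varrho_{-1}) = D^+ f$ and $\partial_v|_{v=0}(f\circ T_{0,v}\circ\varrho_{-1}) = D^- f$, and that the composition law for the $T_{u,v}$ lets one express $T_{u,\cc u}$-translates in terms of the coordinate curves $T_{u,0}$ and $T_{0,v}$ up to elements of the rotation subgroup $\{\varrho_\gamma\}$, under which $X_{4m(m+1)}(\Omega)$ is invariant; alternatively one invokes the explicit formulas of Corollary \ref{rem:Pullbackproposition} directly. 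Everything else is bookkeeping with the already-proved ladder relations and the $\mathcal{M}$-invariance of $X_{4m(m+1)}(\Omega)$.
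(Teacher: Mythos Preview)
Your ladder-operator approach to \eqref{eq:cyclicPoissonFouriermode} is correct and genuinely different from the paper's argument. The paper instead uses Proposition~\ref{prop:Pullbackproposition} to write
\[
P_0^{-m}\circ T_{u,\cc u}=\sum_{j=-m}^{m} P_{-j}^{m+1}(u,\cc u)\,P_j^{-m}
\]
and then invokes a general linear-algebra lemma: since the coefficient functions $u\mapsto P_{-j}^{m+1}(u,\cc u)$ are linearly independent, one can choose $2m+1$ points $u_{-m},\ldots,u_m\in\D$ so that the evaluation matrix $(P_{-j}^{m+1}(u_k,\cc u_k))_{j,k}$ is invertible. Your route via $D^{\pm}$-closure of the $\mathcal{M}$-orbit span is cleaner for the full group and avoids the linear-algebra lemma; the paper's route is more uniform across \eqref{eq:cyclicPoissonFouriermode} and \eqref{eq:cyclicPoissonFouriermodehypandsphere} and needs only Proposition~\ref{prop:Pullbackproposition}.

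However, your treatment of \eqref{eq:cyclicPoissonFouriermodehypandsphere} has a real gap. The claim that ``the same argument works verbatim'' fails because the curves $u\mapsto T_{u,0}\circ\varrho_{-1}$ do \emph{not} lie in the family $\{T_{a,\cc a}:a\in\D\}$, so your span $V:=\mathrm{span}\{P_0^{-m}\circ T_{u,\cc u}:u\in\D\}$ is not obviously $D^{\pm}$-closed by the mechanism you invoke. Your fallback (``differentiate in $u,\cc u$ at $0$ and iterate'') produces $P_{\pm 1}^{-m}\in V$, but ``iterating'' is not justified: $P_1^{-m}$ is an \emph{element} of $V$, not a generator along which you can differentiate again. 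The fix you are missing is that $P_0^{-m}$ is $0$-homogeneous, so $P_0^{-m}\circ\varrho_\gamma=P_0^{-m}$; combined with $\mathcal{M}(\D)=\{\varrho_\gamma:|\gamma|=1\}\cdot\{T_{a,\cc a}:a\in\D\}$, this gives $P_0^{-m}\circ T_{u,\cc u}\circ T_{v,\cc v}=P_0^{-m}\circ T_{a,\cc a}$ for some $a\in\D$, hence $V$ \emph{is} closed under precomposition by each $T_{v,\cc v}$. Differentiating $F\mapsto F\circ T_{se^{i\theta},se^{-i\theta}}$ at $s=0$ and varying $\theta$ then shows $V$ is closed under $F\mapsto (D^{\pm}F)\circ\varrho_{-1}$, after which your ladder argument goes through. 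The spherical case is analogous. Once this point is made explicit, your proof is complete; as written, the step ``these are obtained as limits/compositions within the respective families'' is incorrect and should be replaced.
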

  \begin{proof}
        We begin by showing the first equality in \eqref{eq:cyclicPoissonFouriermodehypandsphere}, which implies \eqref{eq:cyclicPoissonFouriermode}. Recall that, by \eqref{eq:RudinFinite}, the set $\{P^{-m}_j \colon -m \le j \le m\}$ constitutes a basis of $X_{4m(m+1)}(\Omega)$. By \eqref{eq:MoebiusPullback},
			\begin{equation*}
				P_0^{-m} \circ T_{u,\cc{u}}
				=
				\sum_{j=-m}^{m}
				P^{m+1}_{-j}(u,\cc{u})
				P^{-m}_{j}
			\end{equation*}
        for every $u \in \D$. We interpret the claim as a change of basis from ${\{P^{-m}_{j} \;|\; -m \le j \le m\}}$ to ${\{P_0^{-m} \circ T_{u_k,\cc{u}_k} \;|\; -m \le k \le m\}}$. Therefore, we have to find suitable $u_{-m}, \ldots, u_m \in \D$ such that the coefficient matrix
			\begin{equation*}
				\big( P_j^{m+1}(u_k,\cc{u}_k) \big)_{-m \le j,k \le m}
			\end{equation*}
		is invertible. However, it is true in general that given linearly independent complex valued functions $F_1,\dots, F_M$ for some $M\in\N$ on a set containing at least $M$ elements, it is possible to find the same number of points $x_1,\dots, x_M$ in the same set such that the matrix $(F_j(x_k))_{1\leq j,k\leq M}$ is invertible, see \cite[Proof of Prop.~7.28]{gallier2020}.  The second equality in \eqref{eq:cyclicPoissonFouriermodehypandsphere} follows similarly.
	   \end{proof}

		\section{Poisson Fourier modes and Peschl--Minda operators}
        \label{sec:PoissonFouriervsPeschlMinda}

       In \cite{HeinsMouchaRoth2}  the classical Peschl--Minda differential operators, which were introduced by Peschl \cite{Peschl1955} and studied e.g.~by \cite{Ahar69,Harmelin1982,KS07diff,KS11,Minda,Schippers2003,Schippers2007,Sugawa2000}, have been extended to differential operators acting on holomorphic functions defined on subdomains of $\Omega\cap \C^2$. It is the purpose of this section to put these (generalized) Peschl--Minda operators into the context of the present paper. In particular, we relate the Poisson Fourier modes with the Peschl--Minda operators.

        We first briefly recall  the definition from \cite{HeinsMouchaRoth2}.
                Let $U$ be an open subset of $\Omega \cap \C^2$ and ${f \in C^{\infty}(U)}$. The Peschl--Minda derivative $D^{m,n}f$ at the point $(z,w) \in U$ is defined by
		$$D^{m,n} f(z,w):=\frac{\partial^{m+n}}{\partial u^m \partial v^n} \left( f \circ {T}_{z,w} \circ \varrho_{-1} \right)(u,v) \bigg|_{(u,v)=(0,0)}  \, .$$

        We write $D_z^n \coloneqq D^{n,0}$ and $D_w^n \coloneqq D^{0,n}$ and refer to these operators as \emph{pure} Peschl--Minda operators. Comparing with \eqref{eq:firstdualPMalternativedefinition}, we note that the roles of $(z,w)$ and $(u,v)$ have been swapped, which yields the operators $D^1_z$ and $D^1_w$ instead of $D^+$ and $D^-$, respectively.

        \medskip

        Given a linear mapping $T \colon V \longrightarrow W$ we write $\ker(T) \coloneqq \{x \in V \colon Tx = 0\}$ for its kernel. We observe that the pure Peschl--Minda operators reproduce the generalized Poisson kernel from \eqref{eq:generalizedPoisson} in the following way:
		\begin{lemma}\label{l:PeschlMindaPoissonkernels}
			Let $\mu \in \C$, $(z,w)\in\D^2$, $\xi\in\partial\D$ and $k \in \N_0$. Then
   \begin{subequations}
			\begin{align}
				\label{eq:DzderivativePa}
                D^k_zP(z,w;\xi)^{\mu}&=(\mu)_{k}P(z,w;\xi)^{\mu}\left(\psi_{z,w}(\xi)\right)^{-k}\\
                D^k_wP(z,w;\xi)^{\mu}&=(\mu)_{k}P(z,w;\xi)^{\mu}\left(\psi_{z,w}(\xi)\right)^{k}
			\end{align}
   \end{subequations}
			with $\psi_{z,w}$ from \eqref{eq:phizw}. In particular, if $\mu=-m\in (-\N_0)$, then
			\begin{equation*}\label{eq:PoissonkernelinPMkernel}
				P^{-m}
				\in
				\ker(D^{m+1}_z) \cap \ker(D_w^{m+1})\,.
			\end{equation*}
		\end{lemma}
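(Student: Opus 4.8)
The plan is to prove the two derivative formulas \eqref{eq:DzderivativePa} and its companion by direct computation from the definition of the pure Peschl--Minda operators, exploiting the multiplicative transformation law \eqref{eq:PoissonkernelasMoebiustrafo} of the generalized Poisson kernel; the kernel statement will then be an immediate consequence of the Pochhammer prefactor $(\mu)_k$ vanishing for $\mu = -m$ and $k = m+1$.

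First I would recall that, by definition,
\[
  D^k_z P(z,w;\xi)^\mu
  =
  \partial_u^k\left(
    P(\,\cdot\,;\xi)^\mu \circ T_{z,w} \circ \varrho_{-1}
  \right)(u,v)\Big|_{(u,v)=(0,0)} \, ,
\]
and that $T_{z,w}\circ\varrho_{-1}$ sends $(u,v)$ to $(\psi_{z,w}(-u),\,1/\psi_{z,w}(-1/v))$ with $\psi_{z,w}$ from \eqref{eq:phizw}. Hence, writing $(u',v') = T_{z,w}(\varrho_{-1}(u,v))$, the extended transformation identity from Remark~\ref{rem:DefP} (the meromorphic extension of \eqref{eq:PoissonkernelasMoebiustrafo}), applied with the automorphism $\psi_{z,w}$, gives
\[
  P\big(u',v';\xi\big)
  =
  \frac{P\big(-u,\, -1/\!\cdot\,? \big)}{P(z,w;\xi)} \, ,
\]
more precisely $P(T_{z,w}(a,b);\xi) = P(a,b;\psi_{z,w}^{-1}(\xi))/P(z,w;\psi_{z,w}^{-1}(\xi))$ combined with $P(z,w;\psi^{-1}_{z,w}(\xi)) = P(z,w;\xi)\cdot(\text{something})$; the cleaner route is to substitute $\xi = \psi_{z,w}(\zeta)$ and use \eqref{eq:MoebiusPoissonID}–\eqref{eq:PoissonkernelasMoebiustrafo} directly. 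Either way one arrives at a factorization of the form
\[
  \left(P(\,\cdot\,;\xi)^\mu \circ T_{z,w}\circ\varrho_{-1}\right)(u,v)
  =
  P(z,w;\xi)^{\mu}\cdot g_\xi(u,v)^{\mu} \, ,
\]
where $g_\xi$ is an explicit rational expression with $g_\xi(0,0) = 1$ and whose dependence on $u$ alone (setting $v = 0$, so that the second slot is fixed at $\infty$ in the flip chart, contributing nothing) is of the shape $(1 - u/\psi_{z,w}(\xi))^{-1}$ up to a unit. Since $P(z,w;\xi)^\mu$ is a constant in $(u,v)$, I can pull it out of the derivative.

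Then I would compute $\partial_u^k (1 - u\,\alpha)^{-\mu}\big|_{u=0} = (\mu)_k\,\alpha^k$ for a scalar $\alpha$ — a one-line exercise in repeated differentiation of a power — with $\alpha = (\psi_{z,w}(\xi))^{-1}$, yielding \eqref{eq:DzderivativePa}. The formula for $D^k_w$ follows by the same computation with the roles of the two coordinates exchanged (or by invoking the symmetry $P(z,w;\xi) = P(w,z;1/\xi)$ and $\psi_{w,z}$ in place of $\psi_{z,w}$, which inverts $\psi_{z,w}(\xi)$, thereby turning the exponent $-k$ into $+k$). Finally, specializing $\mu = -m$ with $m \in \N_0$: since $(-m)_{m+1} = (-m)(-m+1)\cdots(0) = 0$, both $D^{m+1}_z P^{-m}$ and $D^{m+1}_w P^{-m}$ vanish identically on $\D^2$, which is exactly the asserted membership $P^{-m}\in\ker(D^{m+1}_z)\cap\ker(D^{m+1}_w)$.

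The main obstacle I anticipate is purely bookkeeping: correctly tracking which M\"obius transformation acts where after composing $T_{z,w}$ with $\varrho_{-1}$ and passing through the flip chart, so that the argument $\psi_{z,w}(\xi)$ comes out with the correct sign and in the correct position (numerator vs.\ denominator of the offending linear factor), and making sure the $v$-variable genuinely drops out when computing the pure $z$-derivative. The transformation identities in Remark~\ref{rem:DefP} are stated precisely for this purpose, so the risk is minor, but it is where care is needed; once the factorization $P(z,w;\xi)^\mu \cdot (1 - u/\psi_{z,w}(\xi))^{-\mu}$ is in hand (modulo a factor that is $1 + O(u^2)$ or otherwise contributes no $u^k$ term at $u=0$ after accounting — in fact one should check the full $u$-dependence is exactly this single linear factor, which it is), the rest is immediate.
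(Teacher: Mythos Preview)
Your approach is sound and genuinely different from the paper's. The paper does not compute directly from the definition of $D^k_z$; instead it first restricts to the diagonal $w=\overline{z}$, proves the identity there by induction on $k$ (invoking Proposition~3.8 of the companion paper \cite{HeinsMouchaRoth2}), and then extends to all of $\D^2$ via the two--variable identity principle (Lemma~\ref{lem:RangeIDprincpiple}). Your route---composing with $T_{z,w}\circ\varrho_{-1}$, factoring out $P(z,w;\xi)^\mu$, and recognizing the remaining $u$--dependence as a single power $(1-\alpha u)^{-\mu}$ whose derivatives produce the Pochhammer symbol---is more self--contained (no external proposition needed) and works for general $(z,w)\in\D^2$ in one stroke, at the cost of the bookkeeping you rightly flag.

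Two small corrections to your sketch. First, setting $v=0$ does \emph{not} send the second slot to $\infty$: one has $T_{z,w}\circ\varrho_{-1}(u,0)=\big(\tfrac{z+u}{1+wu},\,w\big)$, so the second argument is simply the constant $w$. This is in fact what you need---the second factor $(1-w\xi)^{-\mu}$ is then genuinely independent of $u$. Second, carrying out your computation one finds
\[
P\Big(\tfrac{z+u}{1+wu},w;\xi\Big)
=P(z,w;\xi)\cdot\frac{1}{1+u/\psi_{z,w}(\xi)}\,,
\]
i.e.\ the linear factor carries a plus sign, not the minus sign you wrote. Differentiating $(1+\alpha u)^{-\mu}$ produces $(-1)^k(\mu)_k\alpha^k$, so you should double--check the sign against the statement; this is exactly the delicate point you anticipated, and it deserves to be pinned down rather than left as ``up to a unit''.
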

            \begin{proof}
                If $w=\cc{z}$, this is most easily proved by an induction on $k$ and the help of Proposition 3.8 in \cite{HeinsMouchaRoth2}. Then, the general result follows from Lemma \ref{lem:RangeIDprincpiple}.
            \end{proof}
             We note in passing that Lemma \ref{l:PeschlMindaPoissonkernels} for $w=\cc{z}$ says that up to multiplication with a unimodular constant the (classical) Poisson kernel $z \mapsto P(z,\overline{z};\xi)$ is a joint eigenfunction of the (classical) Peschl--Minda operators studied e.g.~in \cite{KS07diff}. Remarkably, the Peschl--Minda operators also act as weighted shifts when applied to the zeroth PFM.
		\begin{proposition}\label{prop:shift}
			For $\mu \in \C$ and $n \in \N_0$ we have
			\begin{equation*}
                \label{eq:shiftPM}
				(-\mu+1)_n
				\cdot
				P_n^{\mu}
				=
				D^n_z
				P^{\mu}_0
				\quad \textrm{and} \quad
				(-\mu+1)_n
				\cdot
				P_{-n}^\mu
				=
				D^n_w
				P^{\mu}_0 \, .
			\end{equation*}
		\end{proposition}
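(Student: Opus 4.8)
The plan is to reduce the two claimed identities, via the two--variable identity principle (Lemma \ref{lem:RangeIDprincpiple}), to equalities on the diagonal $\{(z,\cc z):z\in\D\}$, where the Poisson Fourier modes are genuine Fourier coefficients of powers of the classical Poisson kernel and the transformation laws \eqref{eq:MoebiusPoissonID}, \eqref{eq:PoissonkernelasMoebiustrafo} become available. Both sides of each identity lie in $\mathcal H(\D^2)$, since $D_z^n,D_w^n$ preserve holomorphy and $P_n^\mu\in\mathcal H(\D^2)$; hence it suffices to check them for $w=\cc z$, $z\in\D$. The three inputs I would use are: the integral form \eqref{eq:PoissonFourier} of the PFM (rewritten through \eqref{eq:generalizedPoisson}), the action of the pure Peschl--Minda operators on powers of $P$ from Lemma \ref{l:PeschlMindaPoissonkernels}, and the Poisson--kernel transformation laws \eqref{eq:MoebiusPoissonID}, \eqref{eq:PoissonkernelasMoebiustrafo}.

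The main computation goes as follows. For $(z,w)$ near the diagonal one has $P(z,w;e^{it})\in\C\setminus(-\infty,0]$ for all $t$, and there \eqref{eq:PoissonFourier} (with $n=0$) together with \eqref{eq:generalizedPoisson} gives $P_0^\mu(z,w)=\tfrac1{2\pi}\int_0^{2\pi}P(z,w;e^{it})^\mu\,dt$. From the defining formula $D_z^nf(z,w)=\partial_u^n\,f\!\big(\tfrac{z+u}{1+wu},w\big)\big|_{u=0}$ one sees that $D_z^n$ is a linear differential operator of order $n$ in $\partial_z$ with polynomial coefficients in $(z,w)$; in particular it commutes with $\int_0^{2\pi}\!dt$, and applying Lemma \ref{l:PeschlMindaPoissonkernels} under the integral yields, on the diagonal,
\[
D_z^nP_0^\mu(z,\cc z)=\frac{(\mu)_n}{2\pi}\int_0^{2\pi}P(z,\cc z;e^{it})^\mu\,\psi_{z,\cc z}(e^{it})^{-n}\,dt.
\]
Next I would substitute $e^{it}=\psi_{z,\cc z}(e^{is})$: since $\psi_{z,\cc z}\in\mathcal M(\D)$ is an orientation--preserving involution of $\partial\D$, this is a legitimate change of variable with $\psi_{z,\cc z}(e^{it})=e^{is}$; by \eqref{eq:MoebiusPoissonID} one gets $dt=P(z,\cc z;e^{is})\,ds$, and by \eqref{eq:PoissonkernelasMoebiustrafo} (with $u=0$, $\psi_{z,\cc z}(0)=z$) one gets $P(z,\cc z;\psi_{z,\cc z}(e^{is}))=1/P(z,\cc z;e^{is})$, hence $P(z,\cc z;e^{it})^\mu=P(z,\cc z;e^{is})^{-\mu}$. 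Substituting, the integral collapses to $\tfrac1{2\pi}\int_0^{2\pi}P(z,\cc z;e^{is})^{1-\mu}e^{-ins}\,ds=P_n^{1-\mu}(z,\cc z)$, so $D_z^nP_0^\mu(z,\cc z)=(\mu)_nP_n^{1-\mu}(z,\cc z)$.

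It then remains to match the prefactor: by \eqref{eq:Poissonplusminus} one has $\binom{\mu-1}{n}P_n^\mu=\binom{-\mu}{n}P_n^{1-\mu}$, and using $\binom{-\mu}{n}=(-1)^n(\mu)_n/n!$, $\binom{\mu-1}{n}=(\mu-n)_n/n!$ together with the elementary identity $(-1)^n(\mu-n)_n=(1-\mu)_n$, this rearranges to $(\mu)_nP_n^{1-\mu}=(-\mu+1)_nP_n^\mu$. Combining with the previous display and Lemma \ref{lem:RangeIDprincpiple} gives $D_z^nP_0^\mu=(-\mu+1)_nP_n^\mu$ on all of $\D^2$. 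The second identity follows verbatim from the $D_w$--part of Lemma \ref{l:PeschlMindaPoissonkernels} (which carries $\psi_{z,w}(\xi)^{+n}$ in place of $\psi_{z,w}(\xi)^{-n}$), leading to $\tfrac1{2\pi}\int_0^{2\pi}P(z,\cc z;e^{is})^{1-\mu}e^{ins}\,ds=P_{-n}^{1-\mu}(z,\cc z)$; alternatively one may invoke the symmetry $P_n^\mu(z,w)=P_{-n}^\mu(w,z)$ and the fact that $D_w$ is obtained from $D_z$ by interchanging $z$ and $w$. The only genuinely delicate points are the bookkeeping in the change of variables — keeping track of orientation, of the Jacobian $dt=P(z,\cc z;e^{is})\,ds$, and of the branch of the $\mu$--th power — and the easy--to--slip Pochhammer identity $(-1)^n(\mu-n)_n=(1-\mu)_n$; all the conceptual content is already supplied by Lemma \ref{l:PeschlMindaPoissonkernels} and the classical Poisson--kernel transformation laws.
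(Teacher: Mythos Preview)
Your argument is correct and is essentially the paper's own proof: restrict to the diagonal via Lemma~\ref{lem:RangeIDprincpiple}, differentiate under the integral using Lemma~\ref{l:PeschlMindaPoissonkernels}, perform the change of variable $e^{it}=\psi_{z,\cc z}(e^{is})$ with Jacobian coming from \eqref{eq:MoebiusPoissonID} and the kernel inversion from \eqref{eq:PoissonkernelasMoebiustrafo}, recognize $(\mu)_n P_n^{1-\mu}$, and then convert the prefactor via \eqref{eq:Poissonplusminus}. You are only more explicit than the paper about the substitution bookkeeping and the Pochhammer identity $(-1)^n(\mu-n)_n=(1-\mu)_n$.
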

  \begin{proof}
		Let $z \in \D$. We prove the claim for points $(z,\cc{z})$ first from which then follows the result by the identity principle (Lemma \ref{lem:RangeIDprincpiple}). We compute
        \begin{align*}
				D^{n,0}P_0^{\mu}(z,\cc{z})&=\frac{ 1}{2\pi}
				\int\limits_{0}^{2\pi}
				D^{n,0}P\big(z,\cc{z};e^{it}\big)^{\mu}\, dt \\
				&\overset{\eqref{eq:DzderivativePa}}{=}\frac{1}{2\pi}
				\int\limits_{0}^{2\pi}(\mu)_n P\big(z,\cc{z};e^{it}\big)^{\mu}\left(\psi_{z,\cc{z}}(e^{it})\right)^{-n}\, dt\\
				&\overset{\eqref{eq:PoissonkernelasMoebiustrafo}}{=}\frac{(\mu)_n }{2\pi}
				\int\limits_{0}^{2\pi}P\big(z,\cc{z};\psi_{z,\cc{z}}(e^{it})\big)^{-\mu}\left(\psi_{z,\cc{z}}(e^{it})\right)^{-n}\, dt\\
				&\overset{\eqref{eq:MoebiusPoissonID}}{=}\frac{(\mu)_n }{2\pi}
				\int\limits_{0}^{2\pi}P\big(z,\cc{z};e^{it}\big)^{1-\mu}e^{-int}\, dt\\      &=(\mu)_nP^{1-\mu}_n(z,\cc{z})
    \overset{\eqref{eq:Poissonplusminus}}{=}(-\mu+1)_n P^{\mu}_n(z,\cc{z}) \, .
			\end{align*}
        The computation for $D^{0,n}$ and $P_{-n}^{\mu}$ is analogous.
	\end{proof}

  \begin{remark}
    Since Poisson Fourier modes are closely related to hypergeometric functions, see (\ref{eq:PoissonFourierHypergeometric1}), an alternative proof can be given by induction using Gauss' contiguous identities, more precisely with Eq.\ 15.2.24 and with a combination of Eq.\ 15.2.1 and 15.2.6 in \cite{abramowitz1984}.
  \end{remark}
		In view of \cite[Corollary~4.4]{HeinsMouchaRoth2}, we find another connection between the PFM and the pure Peschl--Minda derivatives. Let $\lambda=4m(m+1)$, $m,n\in\N_0$. It is true that
		\[P_n^{-m}\in\ker(D_w^{k})\qquad\text{and}\qquad P_{-n}^{-m}\in\ker(D_z^{k})\qquad\text{for all }k>m.\]
		\begin{proposition}\label{cor:PMkernels}
			Let $M\in\N_0$. Then
			\begin{equation}\label{eq:PMkernels}
				\{P_n^{-m}\, :\, m\in\N_0,\,n\in\Z,\, m\leq M,\, |n|\leq m\}=\ker(D^{M+1}_z)\cap\ker(D^{M+1}_w) \, .
			\end{equation}
		\end{proposition}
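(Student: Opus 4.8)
The plan is to prove the set equality in \eqref{eq:PMkernels} by two inclusions, using Proposition \ref{prop:shift} and the preceding remark on the kernel-membership of PFM together with the spectral decomposition from Theorem \ref{thm:Helgason}.

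For the inclusion ``$\subseteq$'', let $m\in\N_0$ with $m\le M$ and $n\in\Z$ with $|n|\le m$, and consider $P_n^{-m}$. By the remark preceding the proposition (which invokes \cite[Corollary~4.4]{HeinsMouchaRoth2}), since $\mu=-m$ we have $P_n^{-m}\in\ker(D_w^{k})$ and $P_{-n}^{-m}\in\ker(D_z^{k})$ for all $k>m$; by the symmetry $P_n^\mu(z,w)=P_{-n}^\mu(w,z)$ this also gives $P_n^{-m}\in\ker(D_z^k)$ for all $k>m$. Hence $P_n^{-m}\in\ker(D_z^{M+1})\cap\ker(D_w^{M+1})$ because $M+1>m$. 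Since the kernel of a linear operator is a subspace, the whole left-hand side of \eqref{eq:PMkernels} — which by \eqref{eq:RudinFinite} is $\bigcup_{m\le M}X_{4m(m+1)}(\Omega)$ as a spanning set — is contained in the right-hand side. (Strictly, one should note that the left-hand set in \eqref{eq:PMkernels} is literally a set of functions, so this inclusion is immediate once each individual $P_n^{-m}$ lies in both kernels.)

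For the reverse inclusion ``$\supseteq$'', take $f\in\ker(D_z^{M+1})\cap\ker(D_w^{M+1})$; in particular $f\in\mathcal{H}(U)$ for some neighborhood of $(0,0)$, and since $D_z,D_w$ are built from the pullbacks by $T_{u,0},T_{0,v}$ which are defined on $\Omega\cap\C^2$, one first argues that $f$ extends holomorphically to a rotationally invariant domain on which Theorem \ref{thm:Helgason} applies — actually the cleaner route is to work directly on $\D^2$, so I will assume at the outset that the operators are being considered on $\mathcal{H}(\D^2)$. Decompose $f$ into its homogeneous components $f=\sum_{n\in\Z}f_n$ with $f_n$ $n$-homogeneous (as in the proof of Theorem \ref{thm:Helgason}); since $D_z$ and $D_w$ shift homogeneity by exactly one degree, $D_z^{M+1}f=\sum_n D_z^{M+1}f_n$ is again a decomposition into homogeneous components, so $D_z^{M+1}f=0$ forces $D_z^{M+1}f_n=0$ for every $n$, and likewise $D_w^{M+1}f_n=0$. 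Now each nonzero $f_n$ is, by Theorem \ref{thm:helgason}, a scalar multiple of some $P_{-n}^{\mu}$ with $\Re\mu\ge 1/2$ — but I have to be careful: a priori each homogeneous component could a priori correspond to a \emph{different} branch $\mu$ solving $\lambda=4\mu(\mu-1)$ only if $f$ is not assumed to be an eigenfunction. The resolution is that $D_z,D_w$ commute with $\Delta_{zw}$, so $\ker(D_z^{M+1})\cap\ker(D_w^{M+1})$ is $\Delta_{zw}$-invariant but not contained in a single eigenspace; however, each homogeneous component $f_n$ of a holomorphic function on $\D^2$ need not be an eigenfunction at all. So instead I decompose $f_n$ further via its power series: writing $f_n(z,w)=\sum_{k\ge 0} a_{k}\,z^{k+n}w^{k}$ for $n\ge 0$ (and symmetrically for $n<0$), I apply Proposition \ref{prop:shift} in reverse — or rather apply $D_w$ directly to such monomials-in-$zw$-times-$z^n$ expressions. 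The key computational input is that on the space of $n$-homogeneous holomorphic functions, $D_w$ acts (via Proposition \ref{prop:shift} applied to the PFM basis, which spans after completion) as a weighted shift lowering the Fourier index; and $D_w^{M+1}f_n=0$ together with $D_z^{M+1}f_n=0$ pins down $f_n$ to lie in the finite-dimensional span of $\{P_j^{-m}: m\le M,\ |j|\le m\}$ — precisely because the weighted-shift coefficients $(-\mu+1)_{M+1}$ and $(\mu+n+1)_{M+1}$ from Lemma \ref{lem:PoissonFouriershiftbyDualPM}/Proposition \ref{prop:shift} vanish exactly when $\mu\in\{-M,\dots,0\}$ is forced. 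Summing over $n$ and using that only finitely many $P_j^{-m}$ survive gives $f$ in the left-hand side.

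The main obstacle I anticipate is the ``$\supseteq$'' direction, specifically justifying that a general element of the double kernel is automatically a (finite) superposition of exceptional PFM rather than an infinite Poisson–Fourier-type series with a non-integer $\mu$: one must rule out that $D_z^{M+1}$ annihilates some homogeneous piece for accidental reasons unrelated to $\mu$ being a nonpositive integer. The clean way to close this is to observe that $D_z^{M+1}D_w^{M+1}$ restricted to each homogeneous component is an explicit polynomial differential operator in one variable $t=zw$ (after factoring out $z^n$), and to read off from Proposition \ref{prop:shift} that the only holomorphic solutions of $D_z^{M+1}g=D_w^{M+1}g=0$ that are $n$-homogeneous are the finitely many $P_j^{-m}$ with $m\le M$; I would phrase this as: the operator $D_z$ acts injectively on the span of $\{P_j^{\mu}:\text{admissible }j\}$ whenever $\mu\notin\{0,-1,\dots,-M\}$ or the index is out of the exceptional window, so iterating $M+1$ times and combining with the $D_w$ condition leaves only the claimed finite-dimensional space. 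Once this injectivity/shift bookkeeping is in place, the result follows by linearity and the spectral decomposition.
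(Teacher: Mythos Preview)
Your inclusion ``$\subseteq$'' is correct and matches the paper's argument.

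For ``$\supseteq$'' your approach is genuinely different from the paper's, and it has a real gap. The paper does \emph{not} attempt a direct spectral analysis of a general element of the double kernel. Instead it invokes \cite[Corollary~4.4]{HeinsMouchaRoth2}, which gives the explicit description
\[
\ker(D_z^{M+1})\cap\ker(D_w^{M+1})
=\mathrm{span}\Big\{\frac{z^jw^k}{(1-zw)^M}\,:\,0\le j,k\le M\Big\},
\]
so that this space has dimension exactly $(M+1)^2$. Since the left-hand side is $\bigoplus_{m=0}^M X^0_{4m(m+1)}(\D^2)$ with $\dim X^0_{4m(m+1)}(\D^2)=2m+1$, its dimension is $\sum_{m=0}^M(2m+1)=(M+1)^2$ as well. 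Together with the inclusion ``$\subseteq$'' this forces equality by pure linear algebra.

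Your proposed route instead takes a general $f$ in the double kernel, passes to homogeneous components $f_n$, and then tries to argue that $D_z^{M+1}f_n=D_w^{M+1}f_n=0$ forces $f_n$ into the span of exceptional PFM. You correctly notice the obstruction: $f_n$ is \emph{not} a priori an eigenfunction of $\Delta_{zw}$, so Theorem~\ref{thm:helgason} does not apply, and $f_n$ need not be a multiple of any $P_{-n}^\mu$. Your suggested fix---decomposing $f_n$ further as a power series and reading off the shift coefficients from Proposition~\ref{prop:shift} or Lemma~\ref{lem:PoissonFouriershiftbyDualPM}---is not carried out, and it is not clear how to carry it out: those results compute $D_z^k$ and $D_w^k$ on PFM, not on arbitrary $n$-homogeneous holomorphic functions, so ``injectivity on the span of $\{P_j^\mu\}$'' says nothing about an $f_n$ that has not yet been shown to lie in such a span. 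In effect you would need an independent determination of the $n$-homogeneous part of $\ker(D_z^{M+1})\cap\ker(D_w^{M+1})$, which is exactly what the cited result from \cite{HeinsMouchaRoth2} provides and which the paper uses to bypass the whole difficulty via a dimension count.
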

		\begin{proof}
        Denote the left--hand side of \eqref{eq:PMkernels} by $X_M$. Note that $\ker(D^{M+1}_z)\cap\ker(D^{M+1}_w)$ and $X_M$ are finite dimensional spaces. Moreover, $X_M$ is the direct sum
		\[X_M=\bigoplus_{m=0}^M X_{4m(m+1)}^0(\D^2)\]
		and
        $\dim X_{4m(m+1)}^0(\D^2)=2m+1$ by Theorem~\ref{thm:RudinthmforOmega}. Therefore, we compute on the one hand $\dim X_M = \sum_{m=0}^M (2m+1) = (M+1)^2$. On the other hand, by \cite[Corollary~4.4]{HeinsMouchaRoth2}, we have
            \begin{equation}
                \label{eq:PMKernelsExplicit}
                \ker(D^{M+1}_z)
                \cap
                \ker(D^{M+1}_w)
                =
                \mathrm{span}
                \bigg\{
                    \frac{z^j w^k}{(1-zw)^M}
                    \colon
                    0 \le j,k \le M
                \bigg\} \, .
            \end{equation}
            Consequently, $\dim(\ker(D^{M+1}_z)\cap\ker(D^{M+1}_w)) = (M+1)^2$. Moreover, combining \eqref{eq:PoissonFourierExplicit} with \eqref{eq:PMKernelsExplicit} yields $X_M\subseteq(\ker(D^M_z)\cap\ker(D^M_w))$. This proves \eqref{eq:PMkernels} by linear algebra.
		\end{proof}
		\begin{remark}
        Note that both the PFM $P_n^{-m}$ and the generators from \eqref{eq:PMKernelsExplicit} may be understood as holomorphic functions on all of $\Omega$. It thus makes sense to speak of
        \begin{equation*}
            \bigoplus_{M\in\N_0}\big(\ker(D_{z}^{M+1})\cap\ker(D_w^{M+1})\big)
        \end{equation*}
		as the subspace of $\mathcal{H}(\Omega)$ consisting of all finite linear combinations of elements in the kernels of the Peschl--Minda differential operators. We can even go one step further than Proposition~\ref{cor:PMkernels} and conclude that as sets
\begin{align*}
\clos{\Omega}\left( \mathrm{span}\{P_n^{-m}\,:\, m\in\N_0,\,n\in\Z,\, |n|\leq m\}\right)
  & =\clos{\Omega}\left(\bigoplus_{M\in\N_0}\big(\ker(D_{z}^{M+1})\cap\ker(D_w^{M+1})\big)\right)
  \\ & =\mathcal{H}(\Omega) \, ,\end{align*}
where the last equality follows from a combination of \eqref{eq:PMKernelsExplicit} with \cite[(4.1) and Cor.~4.8]{HeinsMouchaRoth1}. This observation leads to the question whether the PFM form a Schauder basis of $\mathcal{H}(\Omega)$. The second author answers this question affirmatively in \cite{Annika}. Perhaps similar results hold for $\mathcal{H}(\Omega_\pm)$ and the corresponding Poisson Fourier modes from Theorem \ref{thm:helgason}.
		\end{remark}

	\end{document}